\numberwithin{equation}{section}
\theoremstyle{plain}
\newtheorem{theorem}{Theorem}[section]
\newtheorem{lemma}{Lemma}[section]
\newtheorem{corollary}{Corollary}[section]
\theoremstyle{remark}
\newtheorem*{assumption*}{Assumption}
\newtheorem{example}{Example}[section]
\newtheorem{remark}{Remark}[section]
\newenvironment{mycenter}[1][\topsep]
{\setlength{\topsep}{#1}\par\kern\topsep\centering}
	{\par\kern\topsep}
\def\cov{\mathop{\rm cov}\nolimits}
\def\var{\mathop{\rm var}\nolimits}
\def\Law{{\cal L}}
\def\HH{\mathbb{H}}
\def\PP{\mathbb{P}}
\def\RR{\mathbb{R}}
\def\BB{\mathbb{B}}
\def\E{\mathbb{E}}
\def\F{\mathcal{F}}
\def\L{\mathcal{L}}
\def\mX{\mathfrak{X}}
\def\X{\mathcal{X}}
\def\mY{\mathfrak{Y}}
\def\Y{\mathcal{Y}}
\def\ub{\underline b}
\let\weak=\rightsquigarrow
\mathchardef\given="626A
\def\BL{\text{BL}}
\def\DP{\text{DP}}
\def\ra{\rightarrow}
\def\sumin{\sum_{i=1}^n}
\def\prodin{\prod_{i=1}^n}
\def\l{\lambda}
\def\a{\alpha}
\newcommand{\indep}{\protect\mathpalette{\protect\independenT}{\perp}}\def\independenT#1#2{\mathrel{\rlap{$#1#2$}\mkern2mu{#1#2}}}
\newcommand{\ind}{\, \raise-2pt\hbox{$\stackrel{\makebox{\scriptsize ind}}{\sim}$}\, }
\newcommand{\iid}{\, \raise-2pt\hbox{$\stackrel{\makebox{\scriptsize iid}}{\sim}$}\,}
\begin{document}


\begin{frontmatter}
\title{Bayesian sensitivity analysis for a missing data model}
\runtitle{Bayesian sensitivity analysis}

\begin{aug}

\author{\fnms{B.} \snm{Eggen}\thanksref{t2}\corref{}\ead[label=e1]{b.eggen@tudelft.nl}}
\address{Delft Institute of Applied Mathematics, Delft University of Technology \\ \printead{e1}}

\author{\fnms{S. L.} \snm{van der Pas}\thanksref{t1}\ead[label=e2]{s.l.vanderpas@amsterdamumc.nl}}
\address{Amsterdam UMC location Vrije Universiteit Amsterdam, Epidemiology and Data Science, and
	Amsterdam Public Health, Methodology\\ \printead{e2} }


\author{\fnms{A. W.} \snm{van der Vaart}\thanksref{t2}\ead[label=e3]{a.w.vandervaart@tudelft.nl}}
\address{Delft Institute of Applied Mathematics, Delft University of Technology \\ \printead{e3} }

\thankstext{t1}{The research leading to these results is partly financed by  grant VI.Veni.192.087
	by the Netherlands Organisation for Scientific Research (NWO).}
\thankstext{t2}{The research leading to these results is partly financed by a Spinoza prize awarded
	by the Netherlands Organisation for Scientific Research (NWO).}

\runauthor{B. Eggen et al.}


\end{aug}

\begin{abstract}
	In causal inference, sensitivity analysis is important to assess the robustness of study conclusions to key assumptions. We perform sensitivity analysis of the assumption that missing outcomes are missing completely at random. We follow a Bayesian approach, which is nonparametric for the outcome distribution and can be combined with an informative prior on the sensitivity parameter. We give insight in the posterior and
	provide theoretical guarantees in the form of
	Bernstein-von Mises theorems for estimating the mean outcome. We study different parametrisations of the model involving Dirichlet process priors on the distribution of the outcome and on the distribution of the outcome conditional on the subject being treated. We show that these parametrisations incorporate a prior on
	the sensitivity parameter in different ways and discuss the relative merits.
	We also present a simulation study, showing the performance of the methods in finite sample scenarios.
\end{abstract}

\begin{keyword}[class=MSC]
\kwd[Primary ]{62G20}
\kwd[; secondary ]{62D10}
\kwd{62D20}
\end{keyword}

\begin{keyword}
	\kwd{Bernstein-von Mises}
	\kwd{missing outcomes}
	\kwd{causal inference}
	\kwd{Dirichlet process}
	\kwd{extended gamma process}
	\kwd{normalised completely random measures}
\end{keyword}

\end{frontmatter}


\section{Introduction}
Drawing causal conclusions from missing data usually rests on
unverifiable assumptions. Sensitivity analysis allows to assess the
robustness of study conclusions to these assumptions. A Bayesian
approach allows to incorporate prior beliefs on parameters
that govern the sensitivity, and can lead to a single summary through the
posterior distribution. However, theoretical support for many Bayesian
methods has been lacking.

In this paper we follow \cite{Scharfstein2003} in performing sensitivity analysis for the assumption
that missing outcomes are missing completely at random (MCAR),
i.e.\ the assumption that observing an individual's outcome is independent of that outcome \citep{LittleRubin}.
Under MCAR the observed outcomes can be considered representative for the missing outcomes, and an ordinary analysis of the observed
outcomes yields unbiased conclusions for the full population.
Yet in many applications, MCAR is not plausible. For example, in HIV research, patients who drop out tend to have worse outcomes
than those who stay on the study (see e.g.\ \cite{Hammeretal96}, \cite{Balzeretal2020}, \cite{Scharfstein2003}).
A naive analysis of only the observed outcomes would tend to result in  biased, overly optimistic conclusions.


With sensitivity analysis, we aim to answer the question: how will our conclusions change under
plausible deviations from MCAR? For example, how does our estimate of drug effectiveness change, if data is missing not at random? To get a mathematical grip on this question, we need to quantify a `deviation from MCAR'. We do so by introducing a sensitivity parameter. Ideally, the sensitivity parameter has a clear interpretation, so that we can be precise about what deviations from MCAR can be considered `plausible'. Moreover, we wish to impose as little structure as possible on the distribution of the observed data.


In this paper we denote the distributions of the observed and unobserved outcomes by $P_1$ and $P_0$, respectively,
and are interested in the distribution $P$ of the outcome in the full population, which is the mixture $P=(1-p)P_0+pP_1$, for
$1-p$ the probability that an observation is missing. Under MCAR the distributions $P_0$ and $P_1$ coincide, and hence
the distribution of interest $P$ agrees with the distribution $P_1$ of the observed outcomes.
However, without further assumptions, the distribution $P$ is not identifiable from the observed data. Following
\cite{Scharfstein2003}, we adopt the working model
\begin{equation*}
	dP_0(y) \propto e^{q(y)}\,dP_1(y),
\end{equation*}
where $q$ is a given function, referred to as the sensitivity function. Any given function $q$
allows to identify the distribution $P$ from the observed data. The MCAR assumption corresponds to
the special case that the function $q$ is constant. Sensitivity analysis may consist of
performing the statistical analysis for every function $q$ in a given parameterised collection
$\{q_\a: \a\in A\}$ of functions, and compare the results for different values of the parameter $\a$,
referred to as the sensitivity parameter. Here it is not assumed that the working model is correct,
or that there exists a `true' sensitivity function or parameter.

Typically, and certainly if we adopt a non-parametric model for $P$, the sensitivity parameter will not
be identifiable from the data.  Further inference must then
rely on subjective  knowledge of the substantive scientist. He or she might deem particular values of $\a$
plausible for their particular study and conduct their data analysis accordingly.
In this subjective approach, the Bayesian paradigm is natural, as this allows a  range of possible values
accompanied by weights, as an alternative to assigning a single, fixed value to the sensitivity parameter.

In \cite{Scharfstein2003} the authors followed a non-parametric Bayesian approach,
putting the classical Dirichlet process
prior on $P$. They conjectured a Bernstein-von Mises theorem for estimating the mean of $P$, conditional on the sensitivity parameter.
One aim of the present paper is to prove the validity of their conjecture.
For this purpose, we first show that the Dirichlet prior on $P$
leads to an extended Gamma type prior on the observed data distribution. Next we develop novel theory
for posterior distributions corresponding to such priors.

We also introduce an alternative non-parametric Bayesian modelling
strategy and compare the results of the approaches. It turns out that the two approaches
are asymptotically equivalent given the sensitivity function, but differ when the sensitivity
function is also equipped with a prior distribution. In both approaches the final inference
is a mixture of the inferences for known sensitivity functions, but the two approaches
differ in the mixing distribution. In one approach this is just the prior over the
sensitivity function, but in the other approach the mixing distribution is dependent
on the estimated data distribution. This is true, even though in both cases the sensitivity parameter is
modelled a priori independent from the other parameters. We discuss this
somewhat surprising finding in Section~\ref{SectionComparison}, when comparing
the two approaches.

Our theoretical analysis is `frequentist Bayesian' in nature in the sense
that we adopt prior modelling as a method to derive posterior distributions, but perform theoretical analysis
under the assumption that the observations are sampled from a fixed distribution $H_0$.
This observational distribution is arbitrary and independent of the sensitivity parameter.

The paper is organised as follows. After presenting
the model in Section~\ref{sec:Model}, we investigate two different ways of putting a prior
on the model, and derive  asymptotic distributions  relating to these priors
in Sections~\ref{sec:paramH} and~\ref{SectionEtaP}. In Section~\ref{SectionComparison}
we compare the results of the two approaches and discuss the merits of the two priors.
In Section~\ref{SectionSimulation}, we present a simulation study and in Section~\ref{SectionDiscussion}
we discuss open questions and further work. In particular, we discuss extension of
our results to general missing at random (MAR) models and causal inference,
which both require that covariates are included in the analysis. There we also briefly
review other Bayesian approaches to causal inference.

Section~\ref{SectionExtendedGammaPosterior} contains the main technical part of the
paper, which consists of an analysis of the posterior process based on the
extended gamma normalised completely random measure, that arises when using
one of our two priors. Part of the simulations and proofs can be found in the supplementary material (\cite{Supplement}), along with the derivation of the asymptotic information bound for the model.

\subsection{Notation}
We use operator notation $Pg=\int g\,dP$ for the expectation of a function $g$ under a probability measure $P$.
The distribution of a random variable $X$ or its conditional distribution given a random variable $Y$
are denoted by $\L(X)$  or $\L(X\given Y)$.
We write $X\given Y\sim P$ if $\L(X\given Y)=P$.
We also write $X\indep Y\given Z$ if the variables
$X$ and $Y$ are conditionally independent given $Z$.
Furthermore we write $\PP_n$, $\HH_n$ or $\PP_{1,n}$ for the empirical distribution of a sample of observations,
while $\BB_P$ is a $P$-Brownian bridge process (defined below).
We write $X_n\given Y_n\weak Z$ if the conditional distribution of $X_n$ given $Y_n$ converges in distribution
to $Z$, in probability or almost surely. The latter concept is defined formally as convergence to zero,
in probability or almost surely,
of the bounded Lipschitz distance between $\L(X_n\given Y_n)$ and $\L(Z)$, as explained in the
beginning of Section~\ref{SectionExtendedGammaPosterior}.

\section{Model specification}
\label{sec:Model}
In this section we define the missing outcomes model and introduce sensitivity functions.

Let the `full data' consist of a random variable $Y$ with values in a measurable space $(\mY,\Y)$, and a Bernoulli variable
$R$, with values in $\{0,1\}$, which registers whether the full data is observed or not.
Instead of $(Y,R)$, we only observe the pair $(X,R)$, for $X$ defined by,
for some  arbitrary symbol $\ast$, not contained in $\mY$,
\begin{align}
	\label{EqDefX}
	X = \begin{cases}
		Y, & R=1, \\
		\ast, & R=0.
	\end{cases}
\end{align}
Thus
$(X,R)$ takes values in $\mX\times\{0,1\}$, for $\mX = \mY \cup \{\ast\}$. In the case
that $\mY=(0,\infty)$, it is customary to choose $\ast$ equal to $0\in\RR$,
and we then have the identity $X = RY$, but this special structure
is irrelevant for the following. In fact, the results in the following are valid for $(\mY,\Y)$
a general Polish space with its Borel $\sigma$-field, and $\mX$ equipped
with the $\sigma$-field generated by $\mY$ and the point $\ast$. (Since $X=\ast$ if and only if
$R=0$, the variable $R$ in the observation $(X,R)$ is redundant, but we shall keep it for ease of notation.)

We assume that we observe a sample $(X_1,R_1),\ldots, (X_n,R_n)$ of $n$ i.i.d.\ copies of $(R,X)$, and are
interested in estimating characteristics of the distribution of $Y$. In particular, we consider estimating
the expectation
$\E_P g(Y)$, for a given measurable function $g : \mY \to \RR$, for instance
the mean outcome $\E_P Y$ or the distribution function $\E_P 1_{Y\le y}$,
in the case that $\mY=(0,\infty)$.

Denote the distribution of $Y$ by $P$, and denote the conditional distributions  of $Y$
given $R=0$ and $R=1$ by $P_0$ and $P_1$.
Then  $P = p P_1 + (1-p)P_0$, for $p=\Pr(R=1)$ the success probability of $R$.
The distribution $P_1$ is identifiable from $X$, but $P_0$ and hence $P$ are not.  Following \cite{Scharfstein2003},
we assume as a working hypothesis, that for a given measurable function $q : \mY \to\RR$ such that $\int e^q\, dP_1 < \infty$,
\begin{align}
	\label{EqRelationP0P1}
	P_0(A) = \frac{\int_A e^q\, dP_1}{\int e^q\,dP_1}, \qquad A \in \Y.
\end{align}
In particular, the measures  $P_0$ and $P_1$ possess the same support.
The identity $P=(1-p)P_0+pP_1$ now implies the relationship
\begin{align}
	\label{EqRelationPP1}
	P(A)=p\int_A \left(1 + \frac{1-p}{p \int e^q dP_1} e^q \right)dP_1   , \qquad A \in \Y.
\end{align}
Since $p$ and $P_1$ are identifiable from the distribution of the data, it is then clear that,
for a given function $q$, the distribution $P$ is identifiable as well.

Another way to interpret model \eqref{EqRelationP0P1}
is through the propensity score, the conditional probability of (not) being observed given the outcome.
An application of Bayes's theorem gives
\begin{align}
	\label{eq.selectionbias}
	\text{logit } \text{Pr}\left(R=0 \given Y\right) = \eta + q(Y),
\end{align}
where
\begin{align}
	\label{eq:prelation}
	e^\eta = \frac{1-p}{p}\frac{1}{\int e^q \,dP_1}.
\end{align}
Because of this relationship, the function $q$ will be referred to as the \emph{selection bias} or the \emph{sensitivity function}.
When $q = 0$ (or constant), the propensity score is independent of the outcome and therefore the model is MCAR.
In the other case the missingness depends on the outcomes, and $q$ models deviations from the MCAR assumption.
The function $q$ can be modelled parametrically, as in the following example, or non-parametrically.

\begin{example}[\cite{Scharfstein2003}]
	Let $\mY = \RR^+$ and let $q$ range over all functions of the form
	$q_\a(y)=\a \log y$, where $\a$ ranges over $\RR$.
	Think of a clinical trial, with $R=0$ indicating a dropout of the study. For $\a > 0$,
	the function   $y\mapsto P(R = 0 \given Y = y)$ increases
	monotonically from 0 to 1 as $y$ increases from 0 to infinity. Thus for $\a > 0$,
	subjects with higher outcomes are more likely to drop out of the study.
	For $\a < 0$, this is reversed. For $\a=0$, the dropout is MCAR.
\end{example}

We do not assume that the model \eqref{EqRelationP0P1} or \eqref{eq.selectionbias} is correct,
or that there is a `true' frequentist parameter $q_0$ that generates the observations.
The main purpose is to assess robustness of study conclusions when the function $q$ varies,
or is equipped with a prior.

We are interested in estimating, for a given measurable function $g : \mY \to \RR$,
\begin{align}
	\label{eq:functional}
	\E_P g(Y)
	= \int g\, dP = p\int g(y)\left(1 + \frac{1-p}{p} \frac{e^{q(y)}}{\int e^q \,dP_1}\right)\,dP_1(y).
\end{align}
The right side of the equation shows that this functional is estimable from the observed data, for any given function
$q$. We might for instance be interested in all functions $q$ such that this functional
is above or below a certain threshold, or
to give confidence or credible intervals on this functional for varying $q$. A prior on
$q$ in addition to a prior on the data model will lead to a posterior distribution on the
functional. It is reasonable to combine non-informative priors on the data distribution
with an informative prior on $q$.

The full model can be parameterised by the triplet $(p, P_1,q)$, but also by the triplet
$(\eta,P, q)$. The first triple determines the marginal distribution of $R$ through $p$, and next
the conditional distribution of $Y$ given $R=1$ through $P_1$, and given $R=0$ through $q$ in view of
equation \eqref{EqRelationP0P1}. The second triple gives the marginal distribution of $Y$ through $P$ and next
the conditional distribution of $R$ given $Y$ through $(\eta, q)$, in view of equation \eqref{eq.selectionbias}.
This suggests various possibilities for prior modelling and inference, which we discuss in the next sections.

\section{Parametrisation by \texorpdfstring{$p$}{p}, \texorpdfstring{$P_1$}{P1}}
\label{sec:paramH}
The distribution of the observation $X$ in \eqref{EqDefX} is given by
$$H=(1-p)\delta_\ast+pP_1,$$
for $\delta_\ast$ the Dirac measure at the missingness indicator $\ast$. Since there is
a bijective relation between $H$ and the pair $(p,P_1)$, the model can also be parameterised
by the pair $(H,q)$. The standard non-parametric
prior on the distribution $H$ is the Dirichlet process prior $\DP(a)$, which is given by
a base measure $a$, a finite Borel measure on $\mX$
(see \cite{Ferguson1973} or Chapter~4 in \cite{Ghosal2017} for a review).
It is well known that the Dirichlet process prior
is conjugate: if $H \sim \DP(a)$ and $X_1\ldots,X_n \given H \sim H$,
then a version of the posterior distribution is given by $H\given X_1,\ldots, X_n\sim \DP(a + n\HH_n)$,
where $\HH_n = \sumin \delta_{X_i} /n$ is the empirical measure of the observations.
Furthermore, the functional Bernstein-von Mises theorem for the Dirichlet process (\cite{Lo1983, Lo1986}
or \cite[p.~364]{Ghosal2017}) gives that
\begin{align}
	\label{eq:BvMH}
	\sqrt{n}\left(H - \HH_n\right) \given X_1,\ldots,X_n \weak\BB_{H_0},\qquad H_0^\infty-\text{a.s.}
\end{align}
Here $\BB_{H_0}$ is an $H_0$-Brownian bridge process, and the convergence can be understood
to mean that, for every finite set
of measurable functions $g_j: \mX\to\RR$ with $(H_0 + a)^\ast g_j^2 < \infty$,
and for almost every realisation of the i.i.d.\ sequence $X_1,X_2,\ldots$ with law $H_0$,
the posterior distribution of
$$\bigl(\sqrt{n}\left(H - \HH_n\right)g_1,\ldots, \sqrt{n}\left(H - \HH_n\right)g_k\bigr)
\given X_1,\ldots,X_n $$
converges to the distribution of the random vector $(\BB_{H_0}g_1,\ldots, \BB_{H_0}g_k)$:
a multivariate normal distribution with mean zero and
covariances $\cov(\BB_{H_0}g_i,\BB_{H_0}g_j)=H_0g_ig_j-H_0g_iH_0g_j$.
The convergence is also true in the space $\ell^\infty(\mathcal{G})$, for every
$H_0$-Donsker class $\mathcal{G}$ with measurable envelope function $G$
such that $(H_0 + a)G^2 < \infty$. (See e.g.\ \cite{Dudley} or \cite{Vandervaart1996} for the
definition of Donsker classes.)

The posterior distribution of $H$ induces a posterior distribution on the pair $(p,P_1)$, and hence for
a fixed function $q$, a posterior distribution on the parameter of interest \eqref{eq:functional}.
In fact, when the domain of $q$ is extended  to $\mathfrak{X}$ such that $\delta_* e^q = 0$, the parameter of interest can be expressed as a function of $H$ as
\begin{align}
	\label{eq:functionalH}
	\E_Pg(Y)  =  \frac{H\left(ge^q \right) H\{\ast\}}{H\left(e^q \right)} + Hg =: \chi(H,q).
\end{align}
We may use the (conditional) delta-method to deduce the limit behaviour of the posterior distribution
of this quantity.

\begin{theorem}[Bernstein-von Mises $H$]
	\label{TheoremBvMH}
	If $H\given X_1,\ldots, X_n\sim \DP(a+n\HH_n)$ and \\
	$\left(H_0 + a \right)(g^2e^{2q}+g^2+e^{2q}) < \infty$,
	then, conditional on $q$, $H_0^\infty-\text{a.s.}$,
	\begin{align*}
		&\sqrt{n}\bigl(\chi(H,q) - \chi(\HH_n,q) \bigr) \given X_1,\ldots, X_n, q\\
		\noalign{\vskip3pt}
		& \weak \frac{\BB_{H_0}(1_{\{\ast\}}) H_0(ge^q)}{H_0e^q}
		+\frac{H_0\{\ast\}\BB_{H_0}(ge^q)}{H_0e^q}-
		\frac{H_0\{\ast\}H_0(ge^q)\BB_{H_0}(e^q)}{(H_0e^q)^2}+\BB_{H_0}g.
	\end{align*}
\end{theorem}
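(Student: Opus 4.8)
The plan is to obtain this from the functional Bernstein--von Mises theorem \eqref{eq:BvMH} for the Dirichlet process, combined with the (conditional) delta-method applied to the map $H\mapsto\chi(H,q)$. For fixed $q$, the functional $\chi(\cdot,q)$ depends on $H$ only through the four real numbers $H(ge^q)$, $H\{\ast\}=H(1_{\{\ast\}})$, $H(e^q)$ and $Hg$, namely $\chi(H,q)=\phi\bigl(H(ge^q),H\{\ast\},H(e^q),Hg\bigr)$ with $\phi(u_1,u_2,u_3,u_4)=u_1u_2/u_3+u_4$. The moment assumption $(H_0+a)(g^2e^{2q}+g^2+e^{2q})<\infty$ says precisely that each of $ge^q$, $g$, $e^q$ (and trivially the bounded function $1_{\{\ast\}}$) satisfies the square-integrability condition required in \eqref{eq:BvMH}. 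Hence \eqref{eq:BvMH}, applied to this finite collection of functions, yields the joint convergence, $H_0^\infty$-a.s.,
$$\sqrt n\bigl(H(ge^q)-\HH_n(ge^q),\ H\{\ast\}-\HH_n\{\ast\},\ H(e^q)-\HH_n(e^q),\ Hg-\HH_n g\bigr)\given X_1,\dots,X_n$$
to the centred Gaussian vector $\bigl(\BB_{H_0}(ge^q),\BB_{H_0}(1_{\{\ast\}}),\BB_{H_0}(e^q),\BB_{H_0}g\bigr)$ with the bridge covariances recorded after \eqref{eq:BvMH}.

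Next I would check that $\phi$ is continuously differentiable at the limit point $\bigl(H_0(ge^q),H_0\{\ast\},H_0(e^q),H_0g\bigr)$. This needs $H_0(e^q)>0$, which holds because $e^q>0$ pointwise and $H_0$ charges $\mY$ (equivalently $p_0>0$, i.e.\ $H_0\{\ast\}<1$); away from $u_3=0$ the map $\phi$ is $C^1$ with $D\phi(h_1,h_2,h_3,h_4)=(h_1u_2+u_1h_2)/u_3-u_1u_2h_3/u_3^2+h_4$. Since $\HH_n(e^q)\to H_0(e^q)>0$ a.s.\ by the strong law, the random center $\chi(\HH_n,q)$ is eventually well-defined. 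Applying the conditional delta-method — writing, on the conditionally high-probability event that $H$ is near $H_0$, $\sqrt n\bigl(\chi(H,q)-\chi(\HH_n,q)\bigr)$ as $D\phi$ evaluated at the $4$-vector $\sqrt n(H(ge^q)-\HH_n(ge^q),\dots)$ plus a conditionally $o_{\PP}(1)$ remainder (a Taylor expansion using the local $C^1$ property of $\phi$ together with conditional tightness from \eqref{eq:BvMH}), and then invoking the conditional continuous-mapping/Slutsky lemma for convergence of conditional laws in bounded-Lipschitz distance — I obtain that $\sqrt n(\chi(H,q)-\chi(\HH_n,q))\given X_1,\dots,X_n,q$ converges, $H_0^\infty$-a.s., to
$$\frac{H_0\{\ast\}\BB_{H_0}(ge^q)+H_0(ge^q)\BB_{H_0}(1_{\{\ast\}})}{H_0e^q}-\frac{H_0\{\ast\}H_0(ge^q)\BB_{H_0}(e^q)}{(H_0e^q)^2}+\BB_{H_0}g,$$
which is exactly the asserted limit after reordering terms.

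The main obstacle is technical rather than conceptual: one has to use a version of the delta-method adapted to almost-sure convergence of the posterior laws and to the data-dependent centering $\chi(\HH_n,q)$ rather than a deterministic center. This is a routine strengthening of the classical functional delta-method, but it must be phrased in terms of the bounded-Lipschitz metric on conditional distributions introduced in Section~\ref{SectionExtendedGammaPosterior}; once that language is in place, the Taylor expansion above and the joint convergence of the first step finish the argument, with the covariance of the limit read off directly from the bridge covariances.
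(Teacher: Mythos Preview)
Your proposal is correct and follows essentially the same approach as the paper: write $\chi(H,q)$ as a smooth function $\phi$ of the four linear functionals $H\{\ast\}$, $H(ge^q)$, $H(e^q)$, $Hg$, invoke the functional Bernstein--von Mises theorem \eqref{eq:BvMH} for these functions, and then apply the conditional delta-method to obtain the stated limit as $\nabla\phi(\theta_0)$ applied to the Brownian-bridge vector. The paper's proof is more terse (it simply cites the conditional delta-method from \cite{Vandervaart1996,Vandervaart2023} rather than spelling out the Taylor-expansion argument), but the substance is identical.
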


\begin{proof}
	The functional can be written $\chi(H,q)=\phi\left( H\{\ast\}, H(ge^q), He^q, Hg \right)$, for
	the function $\phi : \RR^4 \to \RR$, defined by
	\begin{align*}
		\phi(x_1,x_2,x_3,x_4) = \frac{x_1 x_2}{x_3} + x_4.
	\end{align*}
	The theorem therefore follows from
	the delta-method for conditional distributions (\cite{Vandervaart1996}, Section~3.9.3,
	or more precisely \cite{Vandervaart2023}, Theorem~3.10.13)
	combined with the functional Bernstein-von Mises theorem \eqref{eq:BvMH}. The
	limit variable arises as the inner product
	$$\nabla\phi(\theta_0)\cdot\bigl(\BB_{H_0}(1_{\{\ast\}}), \BB_{H_0}(ge^q), \BB_{H_0}e^q, \BB_{H_0}g \bigr),$$
	where $\nabla\phi(\theta_0)$ is the gradient of the function $\phi$ evaluated
	at the vector $\theta_0 = (H_0\{\ast\}, \allowbreak H_0(ge^q), H_0e^q, H_0g )$.
\end{proof}

By the multivariate central limit theorem (or Donsker's theorem),
the empirical process $\sqrt n(\HH_n-H_0)$ converges
(unconditionally) to the same $H_0$-Brownian bridge process $\BB_{H_0}$.
Therefore, the (ordinary) delta-method gives that the sequence
$\sqrt{n}\bigl(\chi(\HH_n,q) - \chi(H_0,q) \bigr)$ tends
in distribution to the same limit variable as in the theorem. Thus the Bayesian and
non-Bayesian estimation procedures of the parameter merge asymptotically.
In the non-parametric situation that the distribution $H$ of the observations is
completely unknown, the empirical distribution $\HH_n$ is asymptotically efficient
for estimating $H_0$. Because efficiency is retained under the delta-method
(\cite{vdVHadamard}), the estimator $\chi(\HH_n,q)$ is asymptotically
efficient for estimating the parameter of interest $\chi(H,q)= Pg$. This implies that
the posterior distribution of $\chi(H,q)$ has minimal concentration as well.
We give an explicit derivation of the lower bound theory in the supplementary material (\cite{Supplement}).

However, these results refer to the situation that the function $q$ is known,
while our purpose here is sensitivity analysis. The unknown function $q$
can be equipped with a prior as well and be incorporated in a joint
Bayesian analysis. This prior would ordinarily be elicited from experts in the field of
research. In the present setup this will typically lead to mixing the limit distribution
over the prior, as we now argue.

If the sensitivity parameter is specified to be a priori independent from $H$,
and we maintain that $X\given (H,q)\sim H$, then it follows that
$(X_1,\ldots, X_n)\indep q\given H$, and hence $q\given X_1,\ldots, X_n,H\sim q\given H\sim q$.
Thus the data provide no information about $q$ and by conditioning on $q$,
the posterior distribution of the parameter can be seen to satisfy
\begin{equation}
	\label{EqLawFunctionalPriorH}
	\L\bigl(\chi(H,q)\given X_1,\ldots, X_n\bigr)=
	\int \Law\bigl(\chi(H,q)\given X_1,\ldots, X_n,q\bigr)\,d\Pi(q).
\end{equation}
Here the integrand is the distribution considered in the preceding theorem and $\Pi$ is
the prior of $q$. We conclude that putting a prior on the sensitivity function
leads simply to averaging the inferences for fixed sensitivity function with respect
to the prior. At first sight this seems natural, but it is special to the present
prior modelling, as we shall see in the next section.

In any case, given a prior on the sensitivity function, the distribution
of the functional will not be asymptotically normal, but a mixture of normal
distributions. This is a consequence of the fact that the sensitivity function
is not identifiable from the data, so that its posterior does not contract to
a Dirac measure.

We finish this section with two more technical observations on non-parametric
modelling with the Dirichlet process.

\begin{remark}
	\label{RemarkParametersBeta}
	Although the Dirichlet process is the canonical non-parametric
	prior for the unknown distribution $H$, in the present case it induces a
	possibly undesirable relation between the priors on the probability $1-p=H\{\ast\}$
	of a missing observation and the prior on $P_1=H_{|\mY}/p$. If $H\sim \DP(a)$,
	then $1-p\sim B\bigl(a\{\ast\},a(\mY)\bigr)$ and is independent of
	$P_1\sim \DP(a_{|\mY})$, where $B(\a,\beta)$ denotes the Beta distribution and is to be interpreted
	as a point mass at 0 if $0=\a<\beta$ (see \cite{Ghosal2017}, Theorem~4.5). Thus the
	specification $H\sim \DP(a)$ with a single base measure $a$ links the parameters of
	the Beta prior for $p$ through the `prior precision' $a(\mY\cup\{\ast\})$ of the base measure.
	An alternative would be to combine a general beta prior on $p$ with a general Dirichlet process prior
	on $P_1$. It can be verified that this does not change the asymptotic distribution
	of the posterior distribution, although it does change the finite sample behaviour.
\end{remark}

\begin{remark}
	\label{RemarkAtom}
	Choosing a base measure $a$ in $H\sim \DP(a)$ without an atom at
	the point $\ast$ leads to the Dirac distribution at 0 as a prior on $H\{\ast\}=1-p$. In the preceding
	theorem the negative effect of this extreme prior is not seen, because the theorem considers the
	standard $\DP(a+n\HH_n)$-posterior distribution.
	However, this is only a \emph{version} of the posterior distribution, while
	a posterior distribution is only unique up to null sets under the Bayesian marginal distribution
	of the data. If $X\given P\sim P$ and $P\sim DP(a)$, then $X\sim a$.
	Therefore, in the present case non-uniqueness of the posterior arises for a set of
	positive frequentist probability if $a\{\ast\}=0<H_0\{\ast\}$.  Since it is likely that $H_0\{\ast\}>0$, it is
	reasonable to put a prior $H\sim \DP(a)$ with $a\{\ast\}>0$.
\end{remark}

\section{Parametrisation by \texorpdfstring{$\eta$}{n}, \texorpdfstring{$P$}{P}}
\label{SectionEtaP}
The distribution $P$ of the full data variable $Y$ is the parameter of prime interest, and is
intuitively  more fundamental than the distribution $H$ of the observed data variable $X$.
Therefore, as is also argued in \cite{Scharfstein2003},
it is reasonable to put a Dirichlet process prior on $P$ rather than $H$. To obtain a full prior on the
observation model, we combine this with an independent prior on the parameter $(\eta,q)$. The resulting
posterior distribution is more complicated than the one in the preceding section, but can, for fixed $(\eta,q)$,
be studied using the theory of completely random measures.

Also in the parametrisation by the triplet $(\eta, P,q)$, the sensitivity parameter $q$ is unidentifiable
from the observed data. Nevertheless, due to the functional relations between the parameters,
the posterior distribution
of $q$ under the present prior modelling will be dependent on the data, even if $q$ is modelled
a priori independent of $(\eta,P)$. We discuss this intriguing situation further at the end of
the section, after first analysing the posterior distribution for fixed $q$.

In view of \eqref{eq:prelation} and \eqref{EqRelationPP1}, we have $dP=p(1+e^{\eta+q})\,dP_1$.
This can be inverted to give
\begin{align}
	\label{eq.PP1relation}
	P_1(A) = \frac{1}{p}\int_A  \frac{1}{1 + e^{\eta + q}}\,dP,\qquad A \in \Y.
\end{align}
Here the parameter $p=\Pr(R=1)$ acts as the normalising constant to the probability
distribution on the right and can be expressed in $(\eta,P,q)$ as
\begin{align}
	\label{eq.prelation}
	p  = \int \frac{1}{1+e^{\eta + q}}\,dP.
\end{align}
The distribution $P_1$ is the distribution of the observed outcome $X$. In the following lemma we show that
a Dirichlet process prior on $P$ leads, for fixed $(\eta, q)$, to a normalised
extended gamma process prior on $P_1$. We briefly review this terminology, referring
to \cite{Kingman(1967),Kingman(1975)} or Appendix~J of \cite{Ghosal2017} for extended discussion.

A completely random measure is a random element $U$ taking values in the set of Borel
measures on a Polish space $\mY$ such that $U(A_1),\ldots, U(A_k)$ are
independent random variables, for every finite measurable partition $A_1,\ldots, A_k$ of $\mY$.
Every such measure can be represented as the sum of a deterministic measure and a measure
of the form
\begin{equation}
	\label{EqRepNCRM}
	U(A)=\int_A\int_0^\infty s\,N^c(dy,ds)+\sum_{j=1}^\infty U_j\delta_{y_j}(A),
\end{equation}
where $N^c$ is a Poisson process on $\mY\times(0,\infty]$, the variables
$U_1,U_2,\ldots$ are arbitrary independent, nonnegative  variables independent of $N^c$, and
$y_1,y_2,\ldots$ is an arbitrary sequence of elements of $\mY$. The weights $U_i$ and locations $y_i$
together form the `fixed atoms' of $U$.
The distribution of $U$ is determined by the intensity measure of $N^c$ and the distributions
and locations of the fixed atoms, which we denote by
\begin{align*}
	\nu^c(A\times B)&=\E N^c(A\times B),\\
	\nu^d(\{y_j\}\times B)&=\Pr(U_j\in B).
\end{align*}
We can think of $\nu^d$ as another measure on $\mY\times(0,\infty]$ concentrated
on the union $\cup_j\{y_j\}\times [0,\infty]$. The pair $(\nu^c, \nu^d)$, or equivalently the sum $\nu^c+\nu^d$,
is known as the intensity measure of $U$.

If $0<U(\mY)<\infty$ almost surely, then
$U$ can be normalised to the random probability measure $U/U(\mY)$, which can serve
as a prior for a probability measure. The posterior distribution of this measure
given an i.i.d.\ sample $X_1,\ldots, X_n$ from the prior $U/U(\mY)$, can be characterised
as a mixture of normalised, completely random measures (see
\cite{James2005,JamesLijoiandPrunster(2009)} or Theorem~14.56 in \cite{Ghosal2017}).

The Dirichlet process is a normalised completely random measure derived from the
Gamma process $U$. If the process $\bigl(U(A): A \in \Y\bigr)$ consists of
random variables such that
$U(A_1),\ldots, U(A_k)$ are independent variables with $U(A_i)\sim \text{Ga}\left(a(A_i),1\right)$,
for every $i$ and every finite collection of disjoint sets $A_1,\ldots,A_k \in \Y$,
then $P= U/U(\mY)$ follows the Dirichlet process $\DP(a)$.
In this case, in view of \eqref{eq.PP1relation},
\begin{align}
	\label{EqRepresentationP1U}
	P_1(A) = \frac{\int_A \frac{1}{1+e^{\eta + q}}\,dU}{\int \frac{1}{1+e^{\eta + q}}\,dU},
	\qquad A \in \Y.
\end{align}
This exhibits $P_1$ also as a normalised completely random measure. Its
intensity measure can be obtained from the intensity measure of the gamma process.
This process is a generalisation of the extended gamma process on $\RR^+$.

\begin{lemma}
	If $P\sim \DP(a)$ for an atomless finite Borel measure $a$ on $\mY$,
	then the measure $P_1$ given in \eqref{eq.PP1relation} is a normalised
	completely random measure with intensity measure given by $\nu^d=0$ and
	\begin{equation}
		\label{EqIntensityMeasure}
		\nu^c(dy,ds) = s^{-1}e^{-s\left(1+e^{\eta + q(y)}\right)}\,ds\,da(y).
	\end{equation}
\end{lemma}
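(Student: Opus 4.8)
The plan is to exploit the completely random measure representation \eqref{EqRepresentationP1U} together with the classical Poisson-process representation of the gamma process and the mapping theorem for Poisson processes. Write $w(y)=\bigl(1+e^{\eta+q(y)}\bigr)^{-1}$, so that $0<w\le 1$ (using $e^{\eta+q(y)}>0$), and note that by \eqref{EqRepresentationP1U} we have $P_1=V/V(\mY)$ for the unnormalised measure $V(A)=\int_A w\,dU$. It therefore suffices to identify the law of $V$ as a completely random measure of the form \eqref{EqRepNCRM} with continuous intensity \eqref{EqIntensityMeasure} and no fixed atoms, and to check that $0<V(\mY)<\infty$ almost surely so that the normalisation is well defined.

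First I would recall that, since the base measure $a$ is atomless, the gamma process $U$ has no fixed atoms and admits the representation $U(A)=\int_{A\times(0,\infty)} s\,N(dy,ds)$, where $N$ is a Poisson random measure on $\mY\times(0,\infty)$ with intensity $\mu(dy,ds)=s^{-1}e^{-s}\,ds\,da(y)$ (see Appendix~J of \cite{Ghosal2017} or \cite{Kingman(1967),Kingman(1975)}). Consequently $V(A)=\int_{A\times(0,\infty)} w(y)\,s\,N(dy,ds)$. Apply the mapping theorem for Poisson processes to the measurable shear $T:(y,s)\mapsto\bigl(y,w(y)s\bigr)$ (measurability of $T$ follows from measurability of $q$): the image $\tilde N:=N\circ T^{-1}$ is again a Poisson random measure, with intensity $\tilde\mu:=\mu\circ T^{-1}$. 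Computing $\tilde\mu$ by the substitution $t=w(y)s$, i.e.\ $s=t/w(y)$ and $ds=dt/w(y)$, gives $\tilde\mu(dy,dt)=t^{-1}e^{-t/w(y)}\,dt\,da(y)=t^{-1}e^{-t(1+e^{\eta+q(y)})}\,dt\,da(y)$, which is exactly \eqref{EqIntensityMeasure}. Since $V(A)=\int_{A\times(0,\infty)} t\,\tilde N(dy,dt)$ and the restrictions of $N$ to $A_i\times(0,\infty)$ are independent over disjoint $A_i$, the variables $V(A_1),\ldots,V(A_k)$ are independent for disjoint measurable $A_1,\ldots,A_k$, so $V$ is a completely random measure of the form \eqref{EqRepNCRM} with $\nu^c=\tilde\mu$ and no fixed-atom part, i.e.\ $\nu^d=0$.

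It then remains to note that the normalisation is legitimate. Because $0<w\le 1$ and $U(\mY)\sim\text{Ga}\bigl(a(\mY),1\bigr)$ is finite and strictly positive almost surely, we get $0<V(\mY)=\int w\,dU\le U(\mY)<\infty$ almost surely, so $P_1=V/V(\mY)$ is a bona fide random probability measure and hence a normalised completely random measure with the stated intensity. Alternatively, one can verify directly that $\tilde\mu$ is a valid Lévy intensity: since $1/w\ge 1$ one has $\int_0^1\!\!\int t\,\tilde\mu(dy,dt)\le a(\mY)$ and $\int_1^\infty\!\!\int\tilde\mu(dy,dt)\le a(\mY)\int_1^\infty t^{-1}e^{-t}\,dt<\infty$.

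The computation above is essentially bookkeeping; the only point that needs care is that $T$ is a genuine shear, in that the jump sizes are rescaled by a location-dependent factor $w(y)$, so the change of variables must be carried out fibrewise in $y$, and one must confirm that the resulting intensity is still a valid Lévy measure — but the bound $w\le 1$ makes this immediate, so I do not anticipate a real obstacle.
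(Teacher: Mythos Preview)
Your proof is correct. The paper takes a slightly different but closely related route: instead of invoking the mapping theorem for Poisson processes, it computes the Laplace functional $-\log\E e^{-\theta\int f\,dP_1'}$ of the unnormalised measure $P_1'(A)=\int_A w\,dU$ directly from that of $U$, and then performs the same fibrewise substitution $s'=w(y)s$ inside the resulting double integral to read off the intensity \eqref{EqIntensityMeasure}. The core computation---the change of variables $t=w(y)s$ yielding the factor $e^{-t(1+e^{\eta+q(y)})}$---is identical in both arguments; you package it via the pushforward of the Poisson process, while the paper packages it via the Laplace exponent. Your version is arguably more transparent about why the completely-random-measure structure is preserved (independence over disjoint sets is inherited automatically from $N$), and you add the explicit check that $0<V(\mY)<\infty$ a.s.\ so that normalisation is legitimate, which the paper leaves implicit. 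The paper's version has the minor advantage of being self-contained, needing only the Laplace-functional formula rather than the mapping theorem.
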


\begin{proof}
	The gamma process $U$ has no fixed atoms and continuous
	intensity measure $\nu^c(dy,ds) = s^{-1}e^{-s}\,ds\,da(y)$. Consider the completely random measure
	defined by $P_1'(A) = \int_A 1/\left(1 + e^{\eta + q}\right)\,dU$.
	For any non-negative measurable function $f : \mY \to\RR$,
	we have that $\int f\, dP_1' = \int f/\left(1+e^{\eta + q}\right)\, dU$,
	as $P_1' \ll U$ with Radon-Nikodym derivative $1/\left(1 + e^{\eta + q}\right)$.
	As in (J.6) on p. 599 of \cite{Ghosal2017}, the minus log Laplace transform of $P_1'$ is then given by
	\begin{align*}
		-\log \E\left(e^{-\theta \int f dP_1'}\right)
		&= -\log \E\left(e^{-\theta \int \frac{f}{1 + e^{\eta + q}} dU}\right) \\
		&= \int\!\!\int_0^\infty \left(1-e^{-\theta s \frac{ f(y)}{1+e^{\eta + q(y)}}}\right) s^{-1}e^{-s}\,ds\,da(y).
	\end{align*}
	To identify the intensity measure $\nu'$ of $P_1'$, the right side must be written in the form
	$\int\!\int_0^\infty \left(1 - e^{-\theta s f(y)}\right)\, \nu'(dy,ds)$.
	By the substitution $s' = s/(1+e^{\eta + q(y)})$, we obtain
	\begin{align*}
		\int\!\!\int_0^\infty &\left(1-e^{-\theta s \frac{ f(y)}{1+e^{\eta + q(y)}}}\right) s^{-1}e^{-s}\,ds\,da(y) \\
		&= \int\!\!\int_0^\infty \left(1-e^{-\theta s' f(y)}\right) (s')^{-1}e^{-s'\left(1 + e^{\eta + q(y)}\right)}\,ds'\,da(y).
	\end{align*}
	As the Laplace functional determines the completely random measure, the proof is complete.
\end{proof}

\begin{remark}
	Unlike in the preceding section, the base measure $a$ of the $\DP(a)$-prior is now a measure on $\Y$, and not
	on $\X=\Y\cup\{\ast\}$. To apply the theory on normalised completely random measures,
	it will be convenient to take it without atoms. This is natural in this situation, unlike
	in the preceding section, where an atom at the special point $\{\ast\}$ is natural, as noted
	in Remark~\ref{RemarkAtom}.
\end{remark}

The non-missing observations  $Y_i$ (the values $X_i$ with $R_i=1$, or equivalently $X_i\not=\ast$) form a
random sample from the distribution $P_1$, of random size
\begin{align*}
	N_n = \sumin R_i.
\end{align*}
The variable $N_n$ follows a Binomial$(n,p)$ distribution, and will tend to infinity with $n$.
In the Bayesian setup with a Dirichlet $\DP(a)$ prior on $P$,
the data may be thought of as having been generated by the Bayesian hierarchy (with every step conditioned
on the outcomes of all preceding steps):
\begin{itemize}
	\item[] {\bf prior:}
	\item generate $(\eta,q)$ from a prior.
	\item generate $P_1$ as a normalised completely random measure
	with intensity measure \eqref{EqIntensityMeasure}.
	\item compute $p=1/\int(1+ e^{\eta+q})\,dP_1$.
	\item[] {\bf data:}
	\item generate $N_n\sim \text{binom}(n,p)$.
	\item generate $\bar X_1,\ldots,\bar X_{N_n}$ i.i.d.\ from $P_1$.
	\item \scalebox{0.94}{randomly permute $\bar X_1,\ldots,\bar X_{N_n}$ and $n-N_n$ copies of $\ast$ to define $X_1,\ldots, X_n$.}
\end{itemize}
If we also compute $P$ by  inverting \eqref{eq.PP1relation}, then the variables $(\eta, q, p, P, X_1,\ldots, X_n)$ will have the same
joint distribution as in the scheme with prior $(\eta,q)\indep P\sim \DP(a)$.
Therefore, the resulting posterior distributions of $P$ or $P_1$ given $(\eta,q,p)$ and $(X_1,\ldots, X_n)$
are the same.

In the Bayesian hierarchy the conditional distribution of $P_1$ given $N_n=k, \bar X_1,\ldots, \allowbreak \bar X_k, \eta, q, p$ is the same as
the posterior distribution of $P_1$ based on a sample of size $k$ from the normalised
completely random measure with intensity measure \eqref{EqIntensityMeasure}.
The asymptotics of this posterior distribution are obtained in Theorem~\ref{thm.BvMegp}
and imply the following theorem.

The theorem gives the limit of the posterior distribution under the assumption
that the observations are an i.i.d.\ sample from the distribution of $(R,X)$ given by
$\Pr(R=1)=p_0$ and $X\given R=1\sim P_{1,0}$, for given $(p_0,P_{1,0})$, or equivalently
$X_1,\ldots, X_n$ are i.i.d.\ copies of $X\sim H_0$, where
$H_0=(1-p_0)\delta_\ast+p_0P_{1,0}$, as in Theorem~\ref{TheoremBvMH}.
Denote the empirical distribution of the observed outcomes by $\PP_{n,1}=N_n^{-1}\sumin \delta_{X_i}1_{R_i=1}.$

\begin{theorem}[Functional Bernstein-von Mises $P_1$]
	\label{thm:BvMP1eta}
	The posterior distribution of $P_1$ under the prior $P\sim \DP(a)$ independent of
	$(\eta,q)$  satisfies, $H_0^\infty$-almost surely,
	$$\sqrt n (P_1-\PP_{n,1})\given X_1,\ldots, X_n,\eta,q,p \weak \sqrt{\frac 1{p_0}}\, \BB_{P_{1,0}},$$
	uniformly in $\eta$ and $q$ that are uniformly bounded above.
\end{theorem}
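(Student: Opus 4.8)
The plan is to obtain the statement as a corollary of the extended gamma posterior Bernstein--von Mises theorem, Theorem~\ref{thm.BvMegp}, by transporting its conclusion through the Bayesian hierarchy described above and then correcting for the random number $N_n$ of observed outcomes; the genuine work is all in Theorem~\ref{thm.BvMegp}. Concretely, I would first invoke the hierarchical construction preceding the theorem to identify the posterior: conditionally on $(\eta,q,p,N_n=k,\bar X_1,\ldots,\bar X_k)$, the posterior law of $P_1$ coincides with the posterior law of $P_1$ given an i.i.d.\ sample of size $k$ from the normalised completely random measure with intensity measure \eqref{EqIntensityMeasure}. Under the sampling law $H_0=(1-p_0)\delta_\ast+p_0P_{1,0}$ the observed outcomes are i.i.d.\ from $P_{1,0}$ and $N_n\sim\text{binom}(n,p_0)$. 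One then checks that the intensity \eqref{EqIntensityMeasure} satisfies the hypotheses of Theorem~\ref{thm.BvMegp}: the base measure $a$ is atomless and finite, and when $\eta+q$ is bounded above the factor $1+e^{\eta+q(y)}$ lies in some interval $[1,C]$ uniformly in such $(\eta,q)$, so the tilting kernel $s^{-1}e^{-s(1+e^{\eta+q(y)})}$ is comparable to the pure gamma intensity $s^{-1}e^{-s}$, which is the regime covered by that theorem. Hence Theorem~\ref{thm.BvMegp} yields, $P_{1,0}^\infty$-a.s., $\sqrt k\,(P_1-\PP_{k,1})\given \bar X_1,\ldots,\bar X_k\weak\BB_{P_{1,0}}$ as $k\to\infty$, uniformly over $\eta,q$ that are bounded above, where $\PP_{k,1}$ denotes the empirical distribution of $\bar X_1,\ldots,\bar X_k$.

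Next I would pass from the deterministic index $k$ to the random sample size. By the strong law, $N_n\to\infty$ and $N_n/n\to p_0$, $H_0^\infty$-a.s. Since $\PP_{n,1}$ is by definition the empirical distribution of the observed outcomes, it equals $\PP_{N_n,1}$; and because the almost sure conditional convergence of the previous display holds along every deterministic sequence tending to infinity while $N_n\to\infty$ a.s., it remains valid with $k$ replaced by $N_n$, so that $\sqrt{N_n}\,(P_1-\PP_{n,1})\given X_1,\ldots,X_n,\eta,q,p\weak\BB_{P_{1,0}}$, $H_0^\infty$-a.s., uniformly in $\eta,q$ bounded above. Finally, writing $\sqrt n\,(P_1-\PP_{n,1})=\sqrt{n/N_n}\cdot\sqrt{N_n}\,(P_1-\PP_{n,1})$ and using $\sqrt{n/N_n}\to 1/\sqrt{p_0}$ a.s., the conditional weak convergence is preserved under multiplication by this data-measurable scalar, with the limit rescaled to $\sqrt{1/p_0}\,\BB_{P_{1,0}}$; this step is routine, since $x\mapsto cx$ is Lipschitz and the scalar converges to a constant, so multiplication is harmless for the bounded-Lipschitz distance defining $\weak$, and it does not disturb uniformity in $\eta,q$ because the scalar does not involve $(\eta,q)$.

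The main obstacle is entirely absorbed into Theorem~\ref{thm.BvMegp}, whose proof occupies Section~\ref{SectionExtendedGammaPosterior}. Granting that result, the only points deserving care here are (i) that conditioning on $(\eta,q,p)$ together with $(N_n,\bar X_1,\ldots,\bar X_{N_n})$ genuinely reduces the posterior of $P_1$ to the extended gamma posterior with the correct effective sample size, and that the sampling law of the observed outcomes equals $P_{1,0}$ irrespective of the conditioned value of $p$; and (ii) the transfer of the almost sure conditional Bernstein--von Mises statement along the random index $N_n$, followed by the rescaling by $\sqrt{n/N_n}$, carried out so that the conclusion holds uniformly over $\eta,q$ that are uniformly bounded above.
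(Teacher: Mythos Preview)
Your proposal is correct and follows essentially the same route as the paper: identify the conditional posterior of $P_1$ via the Bayesian hierarchy as an extended gamma posterior with $b=1+e^{\eta+q}$, invoke Theorem~\ref{thm.BvMegp}, and use $N_n/n\to p_0$ almost surely to pass from sample size $N_n$ to $n$. The paper's own proof is considerably terser (it simply records the identification and says ``the theorem follows from Theorem~\ref{thm.BvMegp}''), whereas you spell out the verification that $b$ is bounded below by $1$ and above by a constant when $\eta+q$ is uniformly bounded above, and make the random-index and rescaling steps explicit; these details are exactly what the paper is leaving implicit.
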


\begin{proof}
	Given $X_1,\ldots, X_n$, define $N_n$ as the number of $X_i$ unequal to $\ast$ and let
	$\bar X_1,\ldots, \allowbreak \bar X_{N_n}$ be the $X_i$ that are not equal to $\ast$. The variables
	$N_n, \bar X_1,\ldots, \bar X_{N_n}$ represent the same information as $X_1,\ldots, X_n$,
	apart from ordering. Thus for any measurable function $h$,
	\begin{align*}
		&\E h\bigl(\sqrt n (P_1-\PP_{n,1})\given X_1,\ldots, X_n,\eta,q,p \bigr)\\
		&\qquad\qquad\qquad=\E h\bigl(\sqrt n (P_1-\PP_{n,1})\given N_n=k, \bar X_1,\ldots, \bar X_k,\eta,q,p \bigr).
	\end{align*}
	The Bayesian hierarchy shows that given $\eta,q,p$,  the variable $N_n$ follows a binomial distribution with
	probability $p$, while given $N_k=k, \eta,q,p$ the variables
	$\bar X_1,\ldots, \bar X_k$ are an i.i.d.\ sample from the distribution $P_1$, which follows
	an extended gamma normalised completely random measure without fixed atoms and
	continuous intensity given by \eqref{EqIntensityMeasure}.
	Thus the distribution of $P_1$ given $N_n=k, \bar X_1,\ldots, \bar X_k,\eta,q,p$ in the
	present notation is the same as the distribution in the notation
	of Section~\ref{SectionExtendedGammaPosterior} of $P_k$ given $X_1,\ldots, X_k$,
	with $b$ in the latter section taken equal to $b=1+e^{\eta+q}$ and $P_0$ in
	the latter section taken equal to the present $P_{1,0}$.
	
	By the law of large numbers $N_n/n\ra p_0$, almost surely. Hence the theorem
	follows from Theorem~\ref{thm.BvMegp}.
\end{proof}

For posterior inference on the functional of interest, we need
the posterior distribution of $p$ next to the one of $P_1$. The preceding theorem shows that
$P_1$ is asymptotically independent of $p$ under the posterior distribution.
Because the parameters $p$ and $P_1$ are related under the prior, this is
not true for finite $n$, and it appears that the posterior distribution of
$p$ may depend on the values $\bar X_1,\ldots,\bar X_{N_n}$ generated later
in the Bayesian hierarchy, besides on the variable $N_n$. The asymptotic
independence suggests that little is lost by ignoring $\bar X_1,\ldots,\bar X_{N_n}$
in posterior inference on $p$ and base this on the observational model
$N_n\sim \text{binom}(n,p)$ only. This so-called \emph{cut-posterior} \cite{Jacob2017, Moss2022} (which cuts
out the later data) will satisfy the usual Bernstein-von Mises theorem for the
binomial distribution.

\begin{lemma}
	\label{LemmaBvMp}
	Suppose that the prior of $\eta$ possesses a bounded, continuous density on a compact
	interval in $\RR$ and that $|q|$ is uniformly bounded.
	Then the posterior distribution of $p$ based on the observation $N_n\sim \text{binom}(n,p)$
	and prior on $p$ induced by $P\sim \DP(a)\indep \eta$, almost surely
	under $p_0$,
	$$\sqrt n \Bigl(p-\frac{N_n}n\Bigr)\given N_n,q \weak N\bigl(0, p_0(1-p_0)\bigr).$$
\end{lemma}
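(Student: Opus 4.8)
The plan is to reduce the assertion to the classical Bernstein--von Mises theorem for the one-parameter binomial family; the only non-routine ingredient is to show that the prior on $p$ induced by $p=\int(1+e^{\eta+q})^{-1}\,dP$, with $P\sim\DP(a)\indep\eta$, possesses a Lebesgue density that is continuous and strictly positive in a neighbourhood of $p_0$. Since the cut-posterior uses only $N_n$, and the binomial likelihood $p\mapsto p^{N_n}(1-p)^{n-N_n}$ depends on $(\eta,P)$ only through $p$, the posterior of $p$ given $(N_n,q)$ is exactly the posterior in the model $N_n\sim\mathrm{binom}(n,p)$ equipped with the marginal prior $\Pi_q$ of $p$, obtained by integrating out both $P$ and $\eta$.

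First I would compute the density of $\Pi_q$ by conditioning on $P$. Let $[c_1,c_2]\subset\RR$ be the compact support of the density $\pi_\eta$ of $\eta$, and set $F_P(\eta)=\int(1+e^{\eta+q(y)})^{-1}\,dP(y)$. For each fixed $P$ the map $\eta\mapsto F_P(\eta)$ is smooth and strictly decreasing, and its derivative $F_P'(\eta)=-\int e^{\eta+q(y)}(1+e^{\eta+q(y)})^{-2}\,dP(y)$ is bounded away from $0$ and from $\infty$ \emph{uniformly} over all $P$ and over $\eta\in[c_1,c_2]$, because $|q|$ is uniformly bounded and hence $\eta+q(y)$ ranges over a fixed compact set. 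Consequently the conditional law of $p$ given $P$ has density $g_P(t)=\pi_\eta\bigl(F_P^{-1}(t)\bigr)\big/\bigl|F_P'(F_P^{-1}(t))\bigr|$ on the image interval $\bigl(F_P(c_2),F_P(c_1)\bigr)$ and $0$ outside it; this $g_P$ is bounded by a constant not depending on $P$ (namely $\|\pi_\eta\|_\infty$ times the uniform upper bound for $1/|F_P'|$), and it is continuous in $t$ (taking $\pi_\eta$ continuous on $\RR$, so that it vanishes at $c_1,c_2$). Averaging over $P\sim\DP(a)$ and invoking dominated convergence with this constant as dominating function shows that $\Pi_q$ has the bounded, continuous density $g_q(t)=\E_{P\sim\DP(a)}\,g_P(t)$.

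Next I would verify that $g_q(p_0)>0$. Since $g_q(p_0)=\E_{P\sim\DP(a)}\,g_P(p_0)$ with $g_q$ continuous, it suffices that $g_P(p_0)>0$ on a set of $P$ of positive $\DP(a)$-probability, i.e.\ that with positive probability $p_0$ is an interior point of $\bigl(F_P(c_2),F_P(c_1)\bigr)$ and $\pi_\eta>0$ at $F_P^{-1}(p_0)$; this is just the requirement that $p_0$ lie in the interior of the support of $\Pi_q$, which holds whenever the prior on $\eta$ is chosen compatibly with $p_0$ and $q$ (by \eqref{eq:prelation}, roughly that $-\mathrm{logit}(p_0)$ falls in $(c_1+\inf q,\,c_2+\sup q)$). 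By continuity, $g_q$ is then bounded away from $0$ on a whole neighbourhood of $p_0$. Now the Bernoulli$(p)$ family is differentiable in quadratic mean with Fisher information $I_p=(p(1-p))^{-1}$, non-singular at $p_0\in(0,1)$; $N_n/n$ is the efficient maximum likelihood estimator; Hoeffding's inequality furnishes uniformly consistent tests; and $N_n/n\to p_0$ holds $p_0$-almost surely by the strong law. A Laplace expansion of $p\mapsto p^{N_n}(1-p)^{n-N_n}g_q(p)$ about $p=N_n/n$ — using $N_n/n\to p_0$ almost surely, the continuity of $g_q$ at $p_0$, and the uniform bound on $g_q$ to dominate the tails in the local parameter $t=\sqrt n(p-N_n/n)$ — then shows that the posterior density of $t$ converges $p_0$-almost surely to the $N\bigl(0,p_0(1-p_0)\bigr)$ density, which is the claim (the almost-sure rather than in-probability statement being available because the expansion is governed entirely by the almost-sure convergence $N_n/n\to p_0$).

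The main obstacle is the content of the second paragraph: establishing that the Dirichlet-induced prior on $p$ is absolutely continuous with a density that is continuous, and locally bounded away from zero, near $p_0$. What makes the dominated-convergence argument work is precisely the \emph{uniform} control of $F_P'$ across the entire support of $\DP(a)$, which is where the uniform boundedness of $q$ and the compactness of the support of the $\eta$-prior enter; once $g_q$ is known to behave well near $p_0$, the Bernstein--von Mises conclusion is standard.
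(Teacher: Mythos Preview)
Your proposal is correct and follows essentially the same route as the paper: both condition on $P$, use the change-of-variables formula for the strictly monotone map $\eta\mapsto\int(1+e^{\eta+q})^{-1}\,dP$ to obtain the induced prior density of $p$ as $\E_P\bigl[\pi_\eta(F_P^{-1}(t))/|F_P'(F_P^{-1}(t))|\bigr]$, and then invoke the classical parametric Bernstein--von Mises theorem for the binomial model. You supply more detail than the paper on the positivity of the density at $p_0$ and sketch a direct Laplace argument for the final step, whereas the paper simply cites the parametric BvM theorem once continuity of the density is established.
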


\begin{proof}
	The lemma follows from the classical Bernstein-von Mises theorem (e.g.\ \cite{Vandervaart1998}, Chapter~10),
	provided that the prior density of $p$ is continuous.
	
	By \eqref{eq.prelation} $p=\int \Psi(\eta+q)\,dP=:\phi_P(\eta)$,
	where $\Psi(v)=1/(1+e^v)$ is the logistic function, and $P\sim\DP(a)$ is independent of $\eta$.
	Because the function $\eta\mapsto \phi_P(\eta)$ is strictly increasing, it follows that the cumulative
	distribution function of $p$ can be written
	$$\Pr\bigl(\phi_P(\eta)\le t\bigr)=\E_P \int_0^{\phi_P^{-1}(t)}h(\eta)\,d\eta,$$
	for $h$ the density of $\eta$.
	The function $\Psi$ is differentiable with derivative $\phi_P'(\eta)=\int \psi(\eta+q)\,dP$,
	where $\psi(v)=\Psi(v)\bigl(1-\Psi(v)\bigr)$ is
	bounded away from zero if $v$ is restricted to a compact set in $\RR$. It follows that
	the cumulative distribution function in the preceding display is continuously differentiable
	with derivative $t\mapsto\E_P h\circ \phi_P^{-1}(t)/\phi_P'\circ \phi_P^{-1}(t)$.
\end{proof}

\begin{remark}
	The Bayesian hierarchy shows that $\bar X_1,\ldots, \bar X_{N_n}\indep p\given N_n,P_1,q$, whence
	$p\given (\bar X_1,\ldots, \bar X_{N_n},N_n,P_1,q)\sim p\given N_n,P_1,q$. Thus the posterior
	distribution of $p$ given the full data and $P_1,q$ is obtainable from the observational model
	$N_n\given p,P_1,q\sim\text{binom}(n,p)$ with prior $p\given P_1,q$. If the latter conditional prior
	is sufficiently smooth, then the posterior distribution of $p$ given the full data and $P_1,q$ will
	satisfy the ordinary binomial Bernstein-von Mises theorem, where the Gaussian approximation
	will be independent of the prior and hence coincide with the cut-posterior in the preceding lemma.
	We conjecture that this is true, but verifying the required smoothness seems not trivial due to the
	infinite-dimensional nature of $P_1$.
\end{remark}

The parameter of interest can be expressed in the parameters $(P_1,\eta,p,q)$ as
$$\E_Pg(Y) = (1-p)\frac{P_1(ge^q)}{P_1e^q}+pP_1 g=:\kappa(p,P_1,q).$$
Therefore, combining the preceding theorem and lemma gives the following Bernstein-von Mises
theorem for the parameter of interest.

\begin{theorem}
	[Bernstein-von Mises $\eta, P$]
	\label{TheoremBvMFunctionalPEta}
	If $P_1$ is as in Theorem~\ref{thm:BvMP1eta} and $p$ is as in Lemma~\ref{LemmaBvMp},
	then, conditional on $q$, $H_0^\infty$-a.s.,
	$$\sqrt n \bigl(\kappa(p,P_1,q)-\kappa(N_n/n,\PP_{1,n},q)\bigr)\given X_1,\ldots, X_n,q\weak Z_{H_0},$$
	where the limit variable $Z_{H_0}$ has the same distribution as the limit in Theorem~\ref{TheoremBvMH}.
\end{theorem}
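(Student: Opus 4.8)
The plan is to write $\kappa(p,P_1,q)=\phi\bigl(p,\,P_1(ge^q),\,P_1e^q,\,P_1g\bigr)$ with $\phi:\RR^4\to\RR$ defined by $\phi(x_1,x_2,x_3,x_4)=(1-x_1)x_2/x_3+x_1x_4$, reduce the statement to a multivariate conditional delta method, and then identify the resulting Gaussian limit with $Z_{H_0}$ from Theorem~\ref{TheoremBvMH}. Substituting $H=(1-p)\delta_\ast+pP_1$ into $\chi(H,q)$ and using the conventions $g(\ast)=0$, $\delta_\ast e^q=0$ shows this is the \emph{same} $\phi$ and, in particular, that the centrings coincide: $\kappa(N_n/n,\PP_{1,n},q)=\chi(\HH_n,q)$. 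So the theorem really asserts that the two prior specifications produce asymptotically identical posteriors for the functional, given $q$.

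First I would upgrade the two marginal statements to a joint one. By Theorem~\ref{thm:BvMP1eta} the posterior law of $\sqrt n(P_1-\PP_{1,n})$ given the data converges $H_0^\infty$-a.s.\ to $p_0^{-1/2}\BB_{P_{1,0}}$; the convergence there is uniform in the variables on which it conditions and its limit is free of them, so averaging over their posterior changes nothing. By Lemma~\ref{LemmaBvMp} the cut-posterior of $p$ given $N_n$ converges to $N\bigl(0,p_0(1-p_0)\bigr)$. Taking $P_1$ from its posterior and $p$ from this cut-posterior --- drawn conditionally independently given the data, as in a modular analysis, and with limits not depending on the conditioning --- the vector $\sqrt n\bigl(p-N_n/n,\ P_1(ge^q)-\PP_{1,n}(ge^q),\ P_1e^q-\PP_{1,n}e^q,\ P_1g-\PP_{1,n}g\bigr)$, given $X_1,\dots,X_n,q$, converges $H_0^\infty$-a.s.\ to $\bigl(W,\ p_0^{-1/2}\BB_{P_{1,0}}(ge^q),\ p_0^{-1/2}\BB_{P_{1,0}}(e^q),\ p_0^{-1/2}\BB_{P_{1,0}}g\bigr)$, with $W\sim N\bigl(0,p_0(1-p_0)\bigr)$ independent of $\BB_{P_{1,0}}$.

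Next I apply the conditional delta method (\cite{Vandervaart1996}, Section~3.9.3; \cite{Vandervaart2023}, Theorem~3.10.13) to $\phi$. It is differentiable at $\theta_0=\bigl(p_0,\,P_{1,0}(ge^q),\,P_{1,0}e^q,\,P_{1,0}g\bigr)$ since $P_{1,0}e^q>0$ ($q$ being bounded), the remaining integrability coming from the hypotheses of Theorem~\ref{thm:BvMP1eta} and Lemma~\ref{LemmaBvMp} together with square-integrability of $g$ for $P_{1,0}$. Its gradient there is $\bigl(P_{1,0}g-P_{1,0}(ge^q)/P_{1,0}e^q,\ (1-p_0)/P_{1,0}e^q,\ -(1-p_0)P_{1,0}(ge^q)/(P_{1,0}e^q)^2,\ p_0\bigr)$, and the limit variable $Z$ is the inner product of this gradient with the limiting vector of the previous paragraph.

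It remains to check $Z\stackrel{d}{=}Z_{H_0}$. Because $H_0=(1-p_0)\delta_\ast+p_0P_{1,0}$, the limiting empirical process on $\mX$ splits as $\BB_{H_0}f=\bigl(P_{1,0}f-f(\ast)\bigr)G+\sqrt{p_0}\,\BB_{P_{1,0}}f$ with $G\sim N\bigl(0,p_0(1-p_0)\bigr)$ independent of $\BB_{P_{1,0}}$ --- the asymptotic form of $\HH_nf=(1-N_n/n)f(\ast)+(N_n/n)\PP_{1,n}f$, and one verifies it reproduces the covariance $H_0(fh)-H_0fH_0h$. Taking $f=1_{\{\ast\}}$ gives $\BB_{H_0}1_{\{\ast\}}=-G$; taking $f=ge^q,e^q,g$ and using $H_0\{\ast\}=1-p_0$, $H_0(ge^q)=p_0P_{1,0}(ge^q)$, $H_0e^q=p_0P_{1,0}e^q$ in the expression for $Z_{H_0}$ in Theorem~\ref{TheoremBvMH}, the $G$-contributions of the second and third terms cancel, and what survives is exactly $Z$ with $G$ in the role of $W$. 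The main obstacle I anticipate is this last bookkeeping, together with making the joint-convergence step airtight --- the conditional independence of the cut-posterior of $p$ and the posterior of $P_1$, and the harmless averaging over the posterior of $\eta$; the delta-method step itself is routine.
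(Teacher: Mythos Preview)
Your proposal is correct and follows essentially the same route as the paper: write $\kappa(p,P_1,q)=\phi(p,P_1(ge^q),P_1e^q,P_1g)$ with $\phi(x_1,x_2,x_3,x_4)=(1-x_1)x_2/x_3+x_1x_4$, apply the conditional delta-method using Theorem~\ref{thm:BvMP1eta} and Lemma~\ref{LemmaBvMp}, and then identify the limit with $Z_{H_0}$. Your identification via the decomposition $\BB_{H_0}f=(P_{1,0}f-f(\ast))G+\sqrt{p_0}\,\BB_{P_{1,0}}f$ is exactly the content of the paper's alternative route through Corollary~\ref{CorollaryBvMHPrioretaP}; the paper also mentions the option of a direct variance computation.
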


\begin{proof}
	The functional can be written $\kappa(p,P_1,q)=\phi\bigl(p,P_1(ge^q),P_1e^q,P_1g\bigr)$, for the function
	$\phi: \RR^4\to\RR$, defined by $\phi(x_1,x_2,x_3,x_4)=(1-x_1)x_2/x_3 +x_1x_4$.
	The theorem therefore can be derived with the help of
	the delta-method for conditional distributions (see \cite{Vandervaart1996}, Section~3.9.3,
	or more precisely \cite{Vandervaart2023}, Theorem~3.10.13)
	combined with the results of Theorem~\ref{thm:BvMP1eta} and Lemma~\ref{LemmaBvMp}.
	The limit variable $Z_{H_0}$ arises as the inner product
	$$Z_{H_0}=\nabla\phi(\theta_0)\cdot\Bigl(Z_0,\sqrt{\frac1{p_0}}\BB_{P_{1,0}}(ge^q),
	\sqrt{\frac1{p_0}}\BB_{P_{1,0}}e^q, \sqrt{\frac1{p_0}}\BB_{P_{1,0}}g \Bigr),$$
	for $Z_0\sim N\bigl(0,p_0(1-p_0)\bigr)$ independent of the Brownian bridge $\BB_{P_{1,0}}$
	and $\nabla\phi(\theta_0)$ the gradient of $\phi$ evaluated
	at the vector
	$\theta_0 = \bigl(p_0, P_{1,0}(ge^q), P_{1,0}e^q, P_{1,0}g \bigr)$. We can check by explicit calculation
	of the variance that $Z_{H_0}$ possesses the same centred normal distribution
	as the limit variable in Theorem~\ref{TheoremBvMH}.
	(Alternatively, we can apply Corollary~\ref{CorollaryBvMHPrioretaP} below to see that
	the posterior distributions of $\bigl(H\{\ast\}, H_{|\Y}/H(\mY)\bigr)$ and $(1-p,P_1)$
	in the two theorems are the same, together with the
	identity $(1-N_n/n)\delta_\ast+(N_n/n)\PP_{1,n}=\HH_n$.)
\end{proof}

\begin{corollary}
	\label{CorollaryBvMHPrioretaP}
	If $P_1$ is as in Theorem~\ref{thm:BvMP1eta} and $p$ is as in Lemma~\ref{LemmaBvMp}
	and $H=(1-p)\delta_{\ast}+pP_1$,
	then the sequence $\sqrt n(H-\HH_n\bigr)$
	has the same asymptotic conditional distribution given $X_1,\ldots, X_n$
	as the sequence $\sqrt n(H-\HH_n)$ in Theorem~\ref{TheoremBvMH}.
\end{corollary}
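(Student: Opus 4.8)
The plan is to show that the two posterior laws of $\sqrt n(H-\HH_n)$ coincide by establishing that the relevant joint posterior laws of the underlying finite-dimensional functionals match, and then invoking the (conditional) delta-method in the same way both theorems did. Concretely, write $H=(1-p)\delta_\ast+pP_1$, so that for any measurable $g:\mX\to\RR$ with $\delta_\ast e^q=0$ we have $Hg = (1-p)\,g(\ast)\,\delta_\ast\{\ast\} + p\,P_1 g$ — but more to the point, the functionals $H\{\ast\}$, $H(ge^q)$, $He^q$, $Hg$ appearing in $\chi(H,q)$ of Theorem~\ref{TheoremBvMH} can be written in terms of $(p,P_1)$: using $\delta_\ast e^q=0$ we get $H\{\ast\}=1-p$, $He^q = p\,P_1e^q$, $H(ge^q)=p\,P_1(ge^q)$, and $Hg = (1-p)g(\ast)+pP_1g$ (and since $g$ is a function on $\mY$ in the functional of interest, the $\ast$-term drops out when we restrict attention to $\E_Pg(Y)$). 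The key identity at the empirical level is $(1-N_n/n)\delta_\ast+(N_n/n)\PP_{1,n}=\HH_n$, already noted in the proof of Theorem~\ref{TheoremBvMFunctionalPEta}.

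First I would reduce the statement to a finite-dimensional claim: by definition, conditional convergence in distribution of $\sqrt n(H-\HH_n)$ (in $\ell^\infty(\mathcal G)$, or along finitely many coordinates $g_1,\dots,g_k$) is characterised by convergence of the conditional laws of the vectors $\bigl(\sqrt n(H-\HH_n)g_1,\dots,\sqrt n(H-\HH_n)g_k\bigr)$. For each fixed $g_j$, write $\sqrt n(H-\HH_n)g_j$ as a smooth function of $\sqrt n\bigl(p-N_n/n\bigr)$ and $\sqrt n\bigl(P_1 g_j-\PP_{1,n}g_j\bigr)$ (plus lower-order terms), using the decomposition above and the empirical identity. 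Thus it suffices to identify the joint conditional limit law of $\sqrt n\bigl(p-N_n/n\bigr)$ together with the process $\sqrt n(P_1-\PP_{1,n})$. Second, I would combine Theorem~\ref{thm:BvMP1eta} and Lemma~\ref{LemmaBvMp}: the former gives $\sqrt n(P_1-\PP_{1,n})\given X_1,\dots,X_n,\eta,q,p\weak \sqrt{1/p_0}\,\BB_{P_{1,0}}$, and the latter gives $\sqrt n(p-N_n/n)\given N_n,q\weak N(0,p_0(1-p_0))$; the Bayesian hierarchy shows these are asymptotically independent (the posterior of $p$ depends on the $\bar X_i$ only through $N_n$ in the limit, as discussed after Theorem~\ref{thm:BvMP1eta}). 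Third, I would apply the conditional delta-method (\cite{Vandervaart1996}, Section~3.9.3, or \cite{Vandervaart2023}, Theorem~3.10.13) to the map $(p,P_1)\mapsto H=(1-p)\delta_\ast+pP_1$, which is affine hence trivially Hadamard-differentiable, to obtain the conditional limit law of $\sqrt n(H-\HH_n)$ in terms of $Z_0\sim N(0,p_0(1-p_0))$ and $\BB_{P_{1,0}}$.

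Finally I would match this limit with the one in Theorem~\ref{TheoremBvMH}, i.e.\ with $\BB_{H_0}$ restricted to functions on $\mX$. This is a covariance computation: on the one hand $\BB_{H_0}$ has covariance $H_0g_ig_j-H_0g_iH_0g_j$; on the other hand, writing $g_j$ in its value at $\ast$ and its restriction to $\mY$, and using $H_0=(1-p_0)\delta_\ast+p_0P_{1,0}$, one checks that the limit built from $Z_0$ and $\sqrt{1/p_0}\,\BB_{P_{1,0}}$ (via the affine map) has exactly the same covariance structure; since both are centred Gaussian, they are equal in law. The cleanest way to present this is to note that the limit in Theorem~\ref{TheoremBvMFunctionalPEta} was already shown to have the same distribution as the limit in Theorem~\ref{TheoremBvMH}, and that that matching was precisely a consequence of the identity of the joint posterior laws of $(H\{\ast\},H_{|\Y}/H(\mY))$ and $(1-p,P_1)$ at the $\sqrt n$-scale; the corollary is the statement of that identity at the level of the whole measure $H$. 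I expect the main (though modest) obstacle to be bookkeeping: one must be careful that $g(\ast)$ enters $\HH_n$ and $H$ in the $\ast$-coordinate identically, so that it cancels in $H-\HH_n$, and that the $\sigma$-field one conditions on ($X_1,\dots,X_n$ versus $X_1,\dots,X_n,\eta,q,p$) does not affect the final marginal statement — this follows because $q$ enters only through the (redundant-in-the-limit) representation and the limit law is free of $\eta,p$ after integrating, exactly as in the passage from Theorem~\ref{thm:BvMP1eta} to Theorem~\ref{TheoremBvMFunctionalPEta}.
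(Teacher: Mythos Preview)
Your proposal is correct and follows essentially the same route as the paper's proof: define the affine map $(p,P_1)\mapsto (1-p)\delta_\ast+pP_1$, combine Theorem~\ref{thm:BvMP1eta} and Lemma~\ref{LemmaBvMp} into a joint conditional limit $\bigl(Z_0,\sqrt{1/p_0}\,\BB_{P_{1,0}}\bigr)$, apply the conditional delta-method, and verify by direct covariance computation that the resulting Gaussian limit equals $\BB_{H_0}$. The paper presents this more compactly by writing out the derivative $\phi'_{p_0,P_{1,0}}(g,G)=-g\delta_\ast+gP_{1,0}+p_0G$ and checking the variance function directly, but the substance is the same; your one cautionary note is apt---do not appeal to the matching already done in Theorem~\ref{TheoremBvMFunctionalPEta}, since that proof cites the present corollary as its alternative justification, so the covariance check must be carried out here.
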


\begin{proof}
	Since $\HH_n=(1-N_n/n)\delta_\ast+(N_n/n)\PP_{n,1}$, it follows that
	$H=\phi(p,P_1)$ and $\HH_n=\phi(N_n/n,\PP_{1,n})$, for the map
	$\phi: [0,1]\times\ell^\infty(\F)\to \ell^\infty(\F)$ given by $\phi(s,Q)=(1-s)\delta_\ast+sQ$.
	By  Lemma~\ref{LemmaBvMp} and Theorem~\ref{thm:BvMP1eta}, the sequence
	$\sqrt n(p-N_n/n, P_1-\PP_{1,n})$ converges in distribution given $X_1,\ldots, X_n$
	in the space $[0,1]\times \ell^\infty(\F)$ to the pair $(Z_0,\BB_{P_{1,0}})$ of
	a normal variable $Z_0\sim N\bigl(0,p_0(1-p_0)\bigr)$ and an independent $P_{1,0}$-Brownian bridge
	process, almost surely.
	Thus the delta-method for conditional distributions \cite[p.~511]{Vandervaart1996}
	gives that, almost surely,
	$$\sqrt n (H-\HH_n)\given X_1,\ldots, X_n\weak \phi_{p_0,P_{1,0}}'\Bigl(Z_0,\sqrt{\frac1{p_0}}\BB_{P_{1,0}}\Bigr),$$
	where the derivative is given by $\phi_{p_0,P_{1,0}}'(g,G)=-g\delta_{\ast}+gP_{1,0}+p_0G$.
	The limit process $f\mapsto Z_0\bigl(-f(\ast)+P_{1,0}f\bigr)+\sqrt{p_0}\BB_{P_{1,0}}f$
	is centred Gaussian and linear in $f$ with variance
	function $p_0(1-p_0) \bigl(P_{1,0}f-f(\ast)\bigr)^2+p_0\bigl(P_{1,0}f^2-(P_{1,0}f)^2\bigr)$.
	It can be verified that this is equal to $\var \BB_{H_0}f$ for
	a $\BB_{H_0}$-Brownian bridge process, which is the limit process in Theorem~\ref{TheoremBvMH}.
\end{proof}

Theorem~\ref{TheoremBvMFunctionalPEta} gives the posterior distribution of the parameter
of interest given the sensitivity function $q$. Suppose that we also equip the
sensitivity function with a prior. In practical situations this might
be an informative prior based on substantive knowledge. If the parameter  $q$ is independent of $(\eta,P)$ under the prior, then $q$ will \emph{not} be independent of
the distribution  $H=(1-p)\delta_\ast+p P_1$ of the observations. We can write
\begin{align*}
	&\L\bigl(\kappa(p,P_1,q)\given X_1,\ldots,X_n\bigr)
	=\int \L\bigl(\kappa(p,P_1,q)\given X_1,\ldots,X_n,q\bigr)\,d\Pi(q\given X_1,\ldots, X_n)\\
	&\qquad\qquad=\iint \L\bigl(\kappa(p,P_1,q)\given X_1,\ldots,X_n,q\bigr)\,d\Pi(q\given H)\,d\Pi(H\given X_1,\ldots, X_n).
\end{align*}
In the second step we use that $q\given H, X_1,\ldots, X_n\sim q\given H$, since
$X_1,\ldots, X_n\indep q\given H$, as follows from the fact that $X_1,\ldots, X_n\given H,q,\eta\iid H$.
By Corollary~\ref{CorollaryBvMHPrioretaP} the posterior distribution $H\given X_1\ldots, X_n$ satisfies
the same Bernstein-von Mises theorem as in Theorem~\ref{TheoremBvMH}.
In particular, the posterior distribution of $H$ will concentrate near $\HH_n$ (equivalently
near $H_0$). Thus informally the preceding display approximates to
\begin{equation}
	\label{EqLawFunctionalPrioretaP}
	\int \L\bigl(\kappa(p,P_1,q)\given X_1,\ldots,X_n,q\bigr)\,d\Pi(q\given \HH_n).
\end{equation}
Thus the observations influence the posterior distribution of $q$ through the prior
dependence between $q$ and $H$. In the next section we compare this to the result of
the preceding section.

\section{Comparison of the two parametrisations}
\label{SectionComparison}
While the posterior distributions of the parameter of interest given $q$ is the same
for the two prior setups in Sections~\ref{sec:paramH} and~\ref{SectionEtaP}, the
distributions differ when $q$ is also (independently) equipped with a prior.

The asymptotic posterior distributions of the parameter for the two priors are
given in \eqref{EqLawFunctionalPriorH}  and \eqref{EqLawFunctionalPrioretaP}.
Both expressions can be informally written in the form
$$\chi(H,q)\given X_1,\ldots, X_n\sim W_n+\frac 1{\sqrt n} Z_n,$$
where the variables on the right satisfy informally, asymptotically, for
a latent `posterior variable' $q$,
$$W_n\indep Z_n\given q,\quad W_n=\chi(\HH_n,q)\quad
Z_n\given  q\sim N(0,\sigma_{H_0}^2),$$
for $\sigma_{H_0}^2$ the variance of the limit variable in Theorem~\ref{TheoremBvMH}.
The two prior settings differ in the distributions of the latent variable $q$:
\begin{itemize}
	\item When using the prior on $H$ as in Section~\ref{sec:paramH}, the sensitivity function $q$ is
	distributed according to its prior.
	\item When using the prior on $(\eta,P)$ as in Section~\ref{SectionEtaP}, the sensitivity function
	is distributed according to $q\given H=\HH_n$, where $(q,H)\sim (q, (1-p)\delta_\ast+pP_1)$.
\end{itemize}
Thus when using the first  prior, the inference on the functional is simply mixed over the
various values of the sensitivity function, weighted by its prior. When using the second
prior, the data will influence our ideas about the sensitivity function.

Note that this
is true even though in both cases the sensitivity function is not identifiable from the
data. The non-identifiability makes that for both priors the sensitivity function remains
a random object, even if the number of observations tends to infinity: its posterior distribution
does not contract to a degenerate distribution. However, the two posterior distributions
differ. In the first case the posterior is equal to the prior, while in the second it is
adapted to the data through the estimation of $H$.

One can debate which of the two situations is more desirable. From a nonparametric Bayesian
view, it seems natural to place the Dirichlet process prior, which in many ways is \emph{the}
nonparametric prior, comparable to the empirical distribution, on the most fundamental
distribution in the problem. This seems to favour the second prior, the one which makes
the posterior distribution learn something about the sensitivity parameter from the data.


From an applied point of view, the choice of prior may be decided based on subject matter experts' beliefs about $q$ in relation to $(p, P_1)$. Only if experts' ideas about the magnitude of the selection bias are unwavering,
no matter the observed values of $X$, does the first prior seem to be preferable. The second prior is the better choice if experts' beliefs about the extent of selection bias will change if observed outcomes turn out to be higher or lower than expected, or if fewer or more subjects drop out than expected. \cite{Scharfstein2003} provide an example of the latter situation in the context of HIV research. The desired dependence between $q$ and $(p, P_1)$ can then be expressed in the second prior. Since $\eta$ is a function of $q, p$ and $P_1$, and $p$ and $P_1$ can be identified from the observed data, a prior belief on $\eta$ induces a posterior belief on $q$ which will depend on the data.
To set the priors on $\eta$ and $q$ in practice, relationship \eqref{eq.selectionbias} forms a useful basis for discussion. For several values of $\eta$ and functions $q$, the prior expectations of the conditional probability of drop-out can be shown for a realistic range of values for $Y$. In this process the priors of $\eta$ and $q$ are intimately linked, as will be fleshed out further in the simulations.

\section{Simulations}
\label{SectionSimulation}
In this section we present the results of a simulation study, illustrating
the theoretical results of the preceding sections, in particular
when a prior is placed on the sensitivity parameter. We focus on the setting where the prior on $\a$ is misspecified, as this is where we expect to see differences between the two parametrisations. The code can be found on \href{https://gitlab.tudelft.nl/beggen/bayesian-sensitivity-analysis}{Gitlab}.

We consider a model with sample space $\mY=(0,\infty)$ and sensitivity function $q(y)=\a \left(\log y - c\right)$,
for $\a, c \in\RR$. When $c=0$, the function is equal to the choice in \cite{Scharfstein2003}. More information on $c$ can be found below in Section \ref{sec:simulsetting}. The functional of interest is the mean outcome $\E_P Y$.
We take $\ast = 0$, and therefore the observations can be written as $X = R Y\in [0,\infty)$.

\subsection{Sampling scheme for the (\texorpdfstring{$P_1$}{P1}, \texorpdfstring{$p$}{p}, \texorpdfstring{$\a$}{a}) parametrisation}
\label{sec:SimulationsH}
A $\DP(a)$ prior on $H=(1-p)\delta_0+p P_1$ is combined with an independent
prior on $\a$. As the observed data $X_1,\ldots X_n$ is a random sample from $H$,
the posterior distribution of $H$ again is a Dirichlet process, but with base measure $a + n\mathbb{H}_n$,
where $\HH_n=\sumin \delta_{X_i}=(1-N_n/n)\delta_0+ \sumin R_i\delta_{X_i}$.
Furthermore, as discussed in Section~\ref{sec:paramH}, see \eqref{EqLawFunctionalPriorH}, the posterior distribution
of $\a$ is equal to its prior, and the two parameters $H$ and $\a$ remain independent.

The functional of interest is written as a function of $H$ and $\a$ by equation \eqref{eq:functionalH},
with $q$ the function $q(y)=\a \left(\log y - c\right)$. The posterior distribution of the functional is obtained
by inserting the $\DP(a + n\mathbb{H}_n)$-distribution for $H$ and the prior distribution for $\a$.
We approximated the Dirichlet process through its stick-breaking representation
(\cite{Sethuraman1994}, or \cite{Ghosal2017}, Theorem~4.12),
where we truncated the infinite sum such that the retained weights summed numerically to one.
We sampled $10000$ times from the posterior of ($H$, $\a$)
and approximated each expectation in $\chi(H,\a)$
by the average of the samples.


\subsection{Sampling scheme for the (\texorpdfstring{$P$}{P}, \texorpdfstring{$\eta$}{n}, \texorpdfstring{$\a$}{a}) parametrisation}
\label{sec:SimulationsetaP}
A $\DP(a)$ prior on $P$ is combined with independent priors on $\eta$ and $\a$. In this case the
posterior distribution given the observed data does not possess a simple form, but the posterior distribution given
the full data does. Following a similar approach to \cite{Scharfstein2003},
we implement a Gibbs sampling scheme, which fills in the missing data.

The full data can be generated through the Bayesian hierarchy, for
$\Psi$ the logistic function (see \eqref{eq.selectionbias}):
\begin{itemize}
	\item $P\sim\DP(a)\indep (\eta,\a)$.
	\item $Y_1\ldots,Y_n\given P,\eta,\a\iid P$.
	\item $R_1,\ldots, R_n\given Y_1,\ldots, Y_n,P,\a,\eta\ind \text{Bernoulli}\bigl(\Psi(-\eta-\a(\log Y_i - c))\bigr)$.
\end{itemize}
This hierarchy shows that
$P\indep (\eta,\a,R_1,\ldots, R_n)\given(Y_1,\ldots, Y_n)$.
This implies that the posterior distribution of $P$ given the full data
$(Y_1,\ldots, Y_n,R_1,\ldots,R_n)$ is the same
as its posterior distribution given $Y_1,\ldots, Y_n$, which is $\DP(a+\sumin\delta_{Y_i})$.
Moreover, this implies $P\indep (\eta,\a)\given(Y_1,\ldots, Y_n,R_1,\ldots,R_n)$, so that the posterior distributions
of $P$ and $(\eta,\a)$ given the full data are independent. Finally, the
second  posterior distribution $(\eta,\a)\given(Y_1,\ldots, Y_n,R_1,\ldots,R_n)$ is recognized
as the posterior distribution of the parameters $(\eta,\a)$
in the logistic linear regression model regressing $R_1,\ldots, R_n$ on the
independent variables $\log Y_i$ with intercept $-\eta$, slope $-\a$.

The observed data consists of $R_1,\ldots, R_n$ and the $N_n=\sumin R_i$ values $Y_i$ with $R_i=1$. Given the
parameters $(P,\eta,\a)$, the $n-N_n$ missing values $(Y_i: R_i=0)$ are distributed according to the
measure $P_0$ given by (combine equations \eqref{EqRelationP0P1} and \eqref{eq.PP1relation}):
\begin{equation}
	\label{eq:relationP0P}
	P_0(A) = \frac{\int_A \frac{e^{ q}}{1 + e^{\eta+q}}\,dP}{\int \frac{e^{ q}}{1 + e^{\eta+q}}\,dP}, \qquad A \in \mathcal{Y}.
\end{equation}
Furthermore, given $(P,\eta,\a)$,
the missing values are conditionally independent of each other and of the observed data.

These observations lead to the following Gibbs sampling scheme, which augments the observed data
to full data in the first step and next updates the parameters $P$ and $(\eta,\a)$ according to the
full data posterior in the second and third steps. In each step we take the observed data as given,
as well as the outcomes of the other two steps.
\begin{enumerate}
	\item Sample  $n-N_n$ values from $P_0$, assign them to the (missing) values $(Y_i: R_i=0)$.
	\item Sample $P\sim \DP(a+\sumin \delta_{Y_i})$.
	\item Sample $(\eta,\a) $ from its posterior distribution given $R_1,\ldots, R_n,Y_1,\ldots, Y_n$
	in logistic regression model $R_1,\ldots, R_n$ on $-\eta-\a \left(\log Y_1 - c \right),\ldots,-\eta-\a \left(\log Y_n - c\right)$.
\end{enumerate}
The three steps are repeated until convergence, giving a sample $P$ in each
iteration. The corresponding mean values $\int y\,dP(y)$, after burn-in, form an approximate
sample from the posterior distribution of the functional of interest. The third step of the algorithm was carried out by the Metropolis-Hastings algorithm  \citep{Metropolis1953,Hastings1970}.
We used a random walk proposal distribution with steps generated from the normal distribution
with mean zero and a given variance. The latter variance was set such that the probability of accepting a new draw
was approximately 0.234 \citep{Gelman1997}. The acceptance probabilities are
computed using the likelihood of the logistic regression model,  given by
\begin{align*}
	&\prodin \left(\frac{1}{1+e^{\eta + \a\left(\log Y_i - c \right)}} \right)^{R_i} \left(\frac{e^{\eta + \a\left(\log Y_i - c\right)}}{1+e^{\eta + \a\left(\log Y_i - c\right)}} \right)^{1-R_i}.
\end{align*}

\subsection{Simulation settings}
\label{sec:simulsetting}
First, we simulate data under the assumption that the true data
distribution belongs to the sensitivity model. Specifically, we fix a true data distribution
through choosing $P^{0}_1=\text{Gamma}(r,s)$,  for some $r,s>0$,  a fraction $1-p_0$ of missing observations
and a sensitivity parameter $\a_0$, where the superscript $0$ indicates the truth. We reserve the subscript for denoting the conditional distributions. With the choice of $q$, the sensitivity model \eqref{EqRelationP0P1}
imposes $dP^0_0(y)\propto e^{\a_0 \left(\log y - c\right)}\,dP^0_1(y)$, resulting in $P^0_0=\text{Gamma}(r+\a_0,s)$. The full data
distribution $P^0=(1-p_0)P^0_0+p_0P^0_1$ is a mixture of the two Gamma distributions, and the functional of interest
is given by $\E_{P^0}Y = p_0 r/s + (1-p_0) (r+\a_0)/s$. Relation \eqref{eq:prelation} yields $\eta_0 = \log\left( (1-p_0)/p_0\right) + \a_0\left(\log s +c\right)- \log\left( \Gamma(r+\a_0) /\Gamma(r)\right)$. 

We want to observe the performance of the model in a natural, but somewhat challenging setting by letting a substantial part of the data be missing and choosing $\a_0$ such that there is a deviation from MCAR, while keeping the groups comparable. 
Specifically, we chose $r=2$, $s=1$, $p_0=0.6$, $\a_0=2$ and $c \approx 0.42$, giving $\eta_0\approx -1.35$ and $\E_{P^0}Y=2.8$. A default choice is $c=0$, but a different interesting choice would be $c = \mathbb{E}_{P^0}[\log Y] = p_0\left(\psi(r)-ln(s)\right) + (1-p_0)\left(\psi(r+\alpha_0) - \ln(s)\right)$, where $\psi$ is the digamma function. Using this choice of $c$ results in a centred $q$, ideally resulting in independent behaviour of $\alpha$ and $\eta$. In practice, data would only be available from $P^0_1$. Therefore another choice for $c$ could be $\mathbb{E}_{P_1^0}\left[\log Y\right]$, which was the choice we made. Note that $P^0_1$, $P^0_0$ and the functional of interest do not depend on the choice of $c$. Choosing an oracle $c$ had similar results. 

The posterior distributions are agnostic of the above choices, and depend only on the priors chosen for the parameters. In the first prior scheme, we let $H \sim \DP(a)$, with $a(\{0\}) = 1-p_0$
and $a_{|\mathbb{R}^+} = \text{Gamma}(2,1)$. For the other parametrisation, we let $P \sim \DP(\tilde{a})$,
with $\tilde{a}$ a mixture between the true $P^0_1$ with probability $p_0$ and the true $P^0_0$ with probability $1-p_0$. In both parametrisations the prior precision was taken to be equal to one, as to minimise the effect of the base measure.
We put a normal prior on $\a$ with mean 1 and standard deviation 0.5 and in the second parametrisation a uniform prior on $\eta$ with minimum -5 and maximum 2. By taking this interval relatively big we hope to investigate the effect of the data on the posterior of $\a$. Our main interest lies in observing this effect in the $P, \eta$ parametrisation, discussed in Section~\ref{SectionComparison}. We found that this effect is visible when the prior on $\a$ is misspecified: the mean of $\a$ is shifted from $\a_0$. Although it should be expected that practitioners have reasonable insight in this choice, usually the $\a$ can't be correctly specified as it is not assumed that the sensitivity model is true. 

\subsection{Results}
\label{sec:simulresult}
The results can be seen in Figure~\ref{fig:functional}. We also calculated the coverage of the $90\%$-credible intervals. These intervals are defined by the 0.05 and 0.95 quantiles of the posteriors. The results for fixed $\a$ can be found in Table~\ref{tab:fixedalpha} and with prior on $\a$ in Table~\ref{tab:alphaprior}. Because of large run times, especially for $n=10000$, the coverage results were obtained using the Delftblue supercomputer \citep{DHPC2022}.
\begin{table}[!ht]
	\begin{tabular}{r|c|c|c}
		& $n = 100$ & $n = 1000$ & $n = 10000$ \\
		& Coverage | Length & Coverage | Length & Coverage | Length \\
		\hline  $H$ \hspace*{8pt}&  \hspace*{9pt}0.784 |  0.877 & \hspace*{9pt}0.857 | 0.361 & \hspace*{9pt}0.892 | 0.122 \\
		$P, \eta$ \hspace*{8pt}& \hspace*{9pt}0.771 | 0.870 & \hspace*{9pt}0.855 | 0.360 & \hspace*{9pt}0.892 | 0.122
	\end{tabular}
	\caption{Coverage of the $90 \%$-credible intervals of the posterior distribution of the functional of interest for the different parametrisations with fixed $\alpha = \alpha_0$.}
	\label{tab:fixedalpha}
\end{table}
\begin{table}[!ht]
	\centering
	\begin{tabular}{r|c|c|c}
		& $n = 100$ & $n = 1000$ & $n = 10000$ \\
		& Coverage | Length & Coverage | Length & Coverage | Length \\
		\hline  $H, \a$ &  \hspace*{9pt}0.560 |  0.946 & \hspace*{9pt}0.328 | 0.700 & \hspace*{9pt}0.026 | 0.662 \\
		$P, \eta, \a$ & \hspace*{9pt}0.531 | 0.932 & \hspace*{9pt}0.304 | 0.687 & \hspace*{9pt}0.131 | 0.635 
	\end{tabular}
	\caption{Coverage of the $90 \%$-credible intervals of the posterior distribution of the functional of interest for the different parametrisations with \emph{misspecified prior} $\alpha \sim N(1,0.25)$.}
	\label{tab:alphaprior}
\end{table}
\begin{figure}[!ht]
	\centering
	\begin{tabular}{|m{0.3\textwidth}|m{0.3\textwidth}|m{0.3\textwidth}|}
		\hline
		\begin{mycenter}[2pt] $\mathbf{n = 100}$ \end{mycenter} & \begin{mycenter}[2pt]
			$\mathbf{n=1000}$	\end{mycenter} &  \begin{mycenter}[2pt]
			$\mathbf{n=10000}$	\end{mycenter}  \\[-15pt] \hline \vspace*{-0.3in}\hspace*{-0.26in} \vspace*{-0.3in} \includegraphics[scale = 0.457]{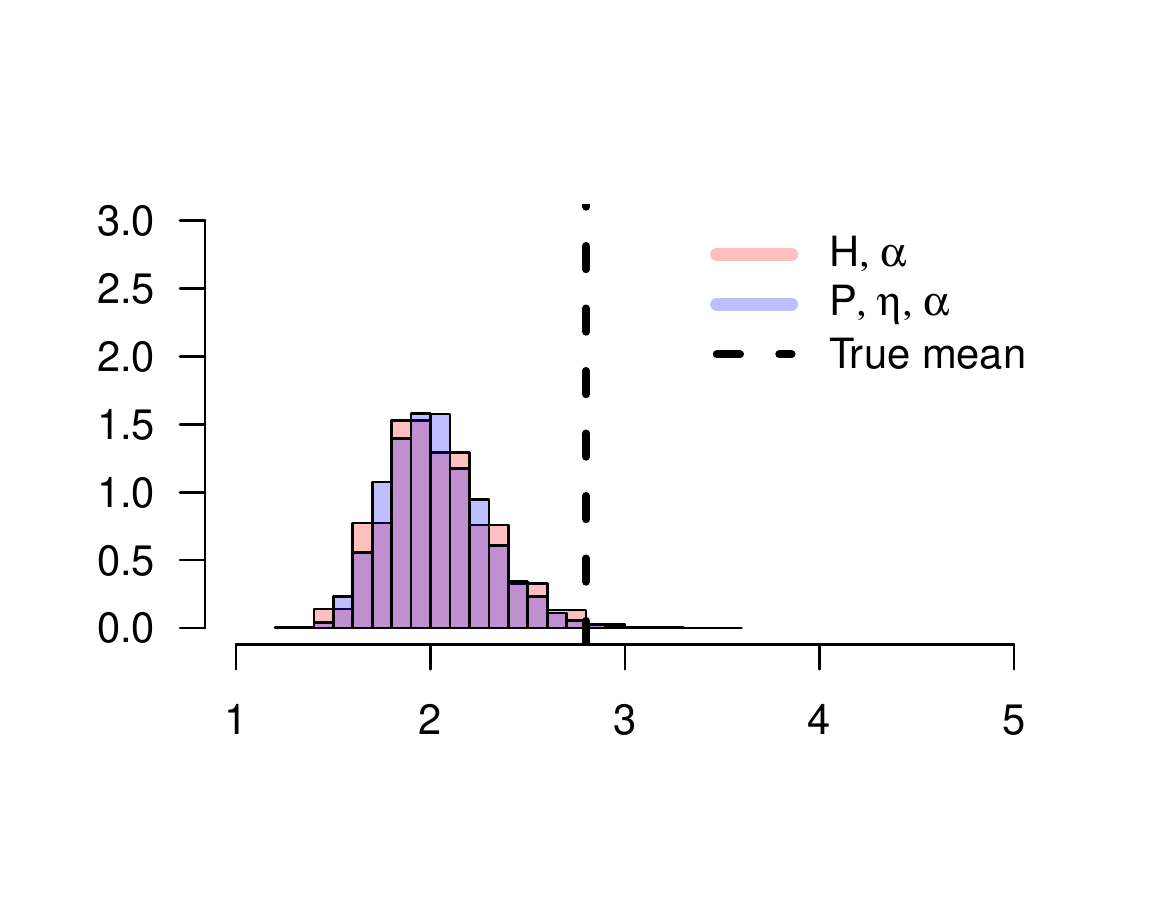}&\vspace*{-0.3in}\hspace*{-0.26in} \vspace*{-0.3in} \includegraphics[scale = 0.457]{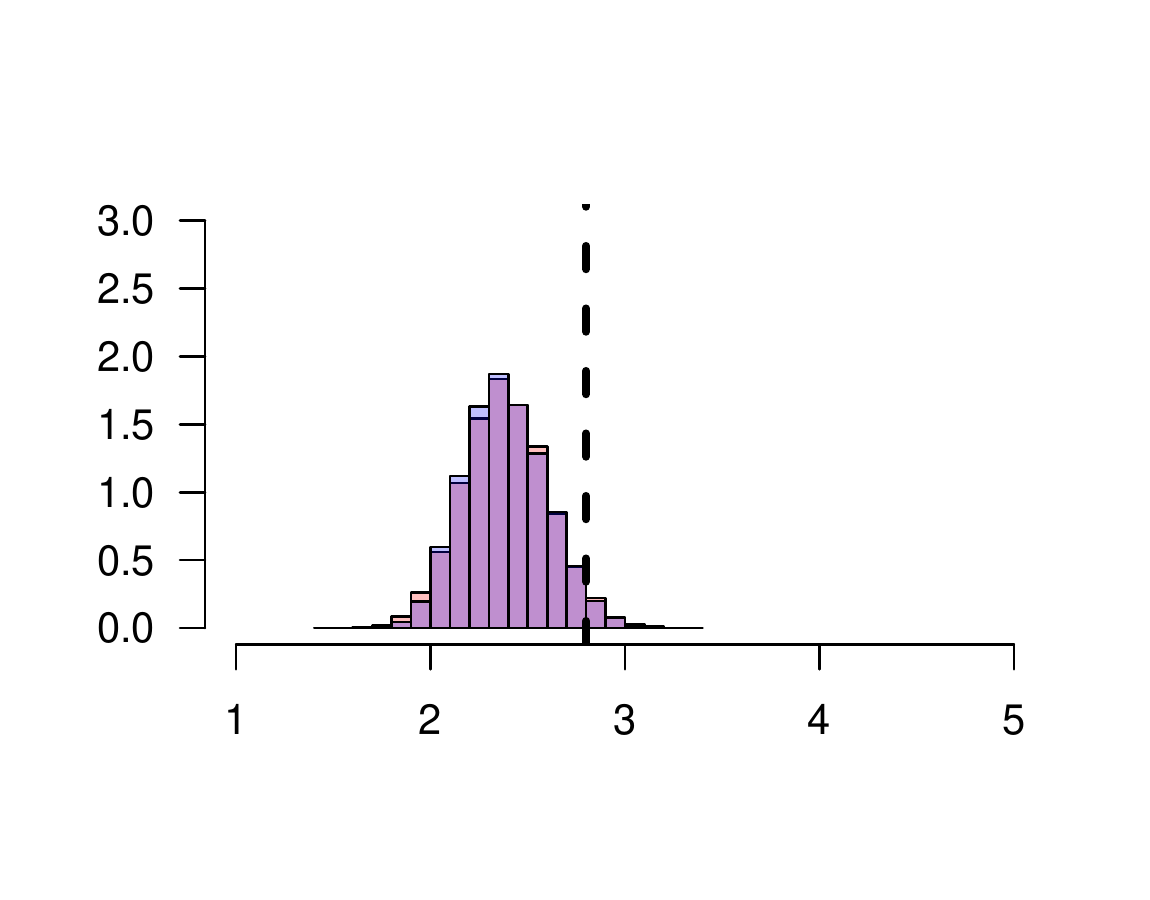}&\vspace*{-0.3in}\hspace*{-0.26in}\vspace*{-0.3in} \includegraphics[scale = 0.457]{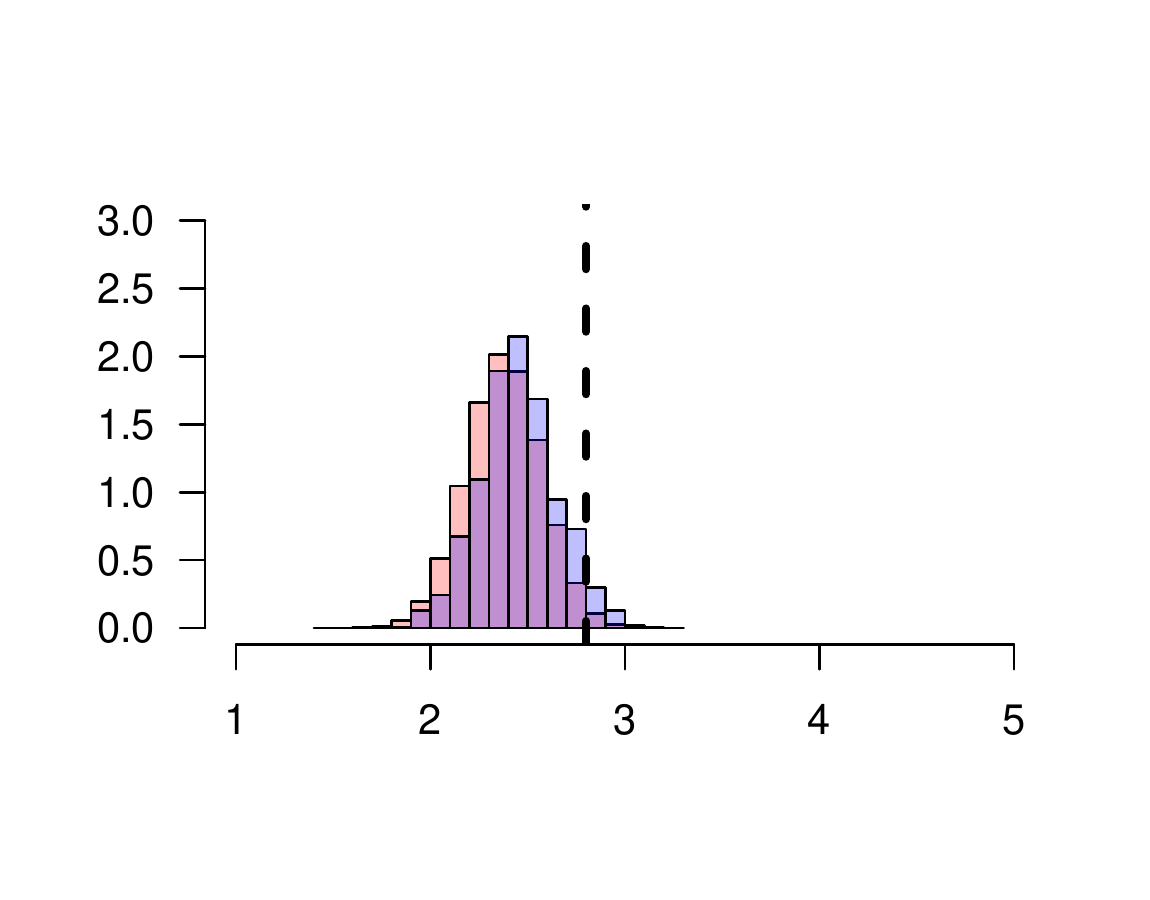}\\
		\hline \vspace*{-0.3in}\hspace*{-0.26in} \vspace*{-0.3in} \includegraphics[scale = 0.457]{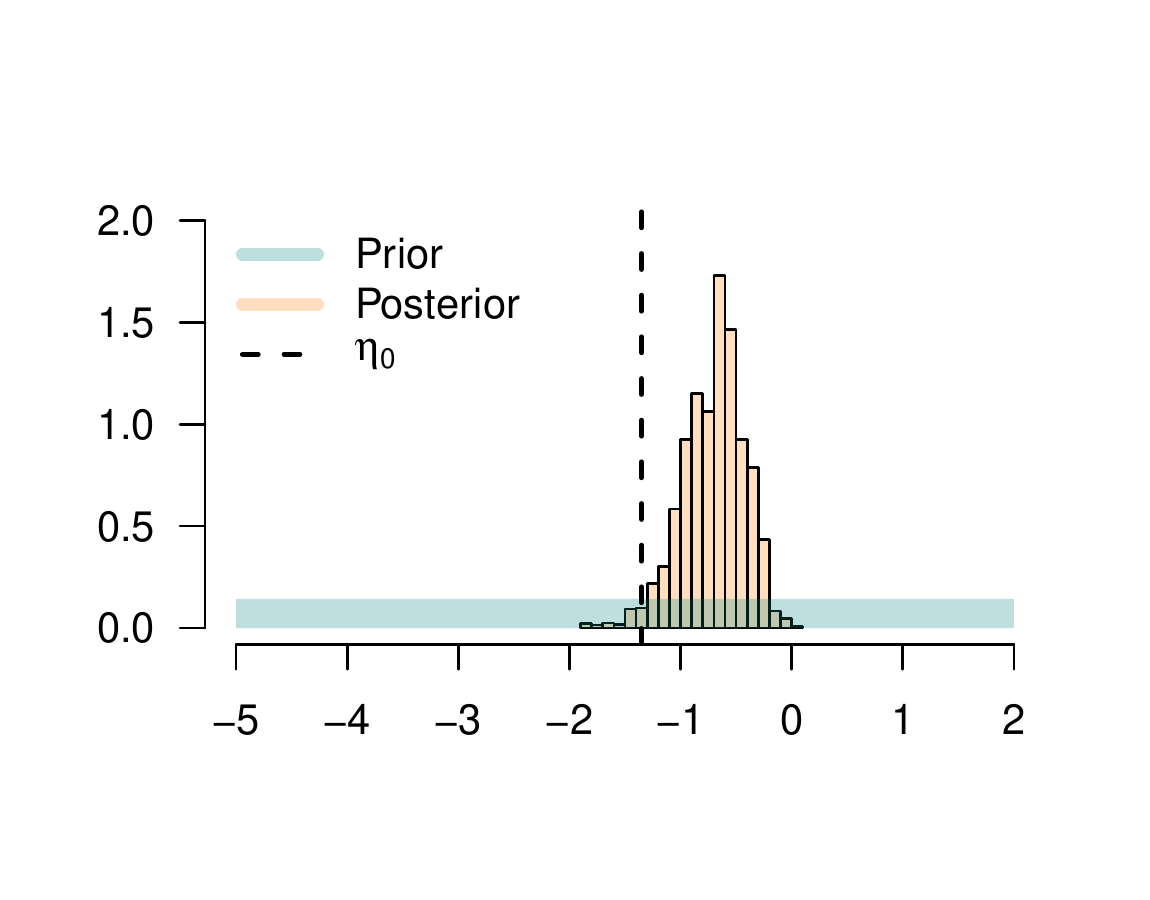}&\vspace*{-0.3in}\hspace*{-0.26in} \vspace*{-0.3in} \includegraphics[scale = 0.457]{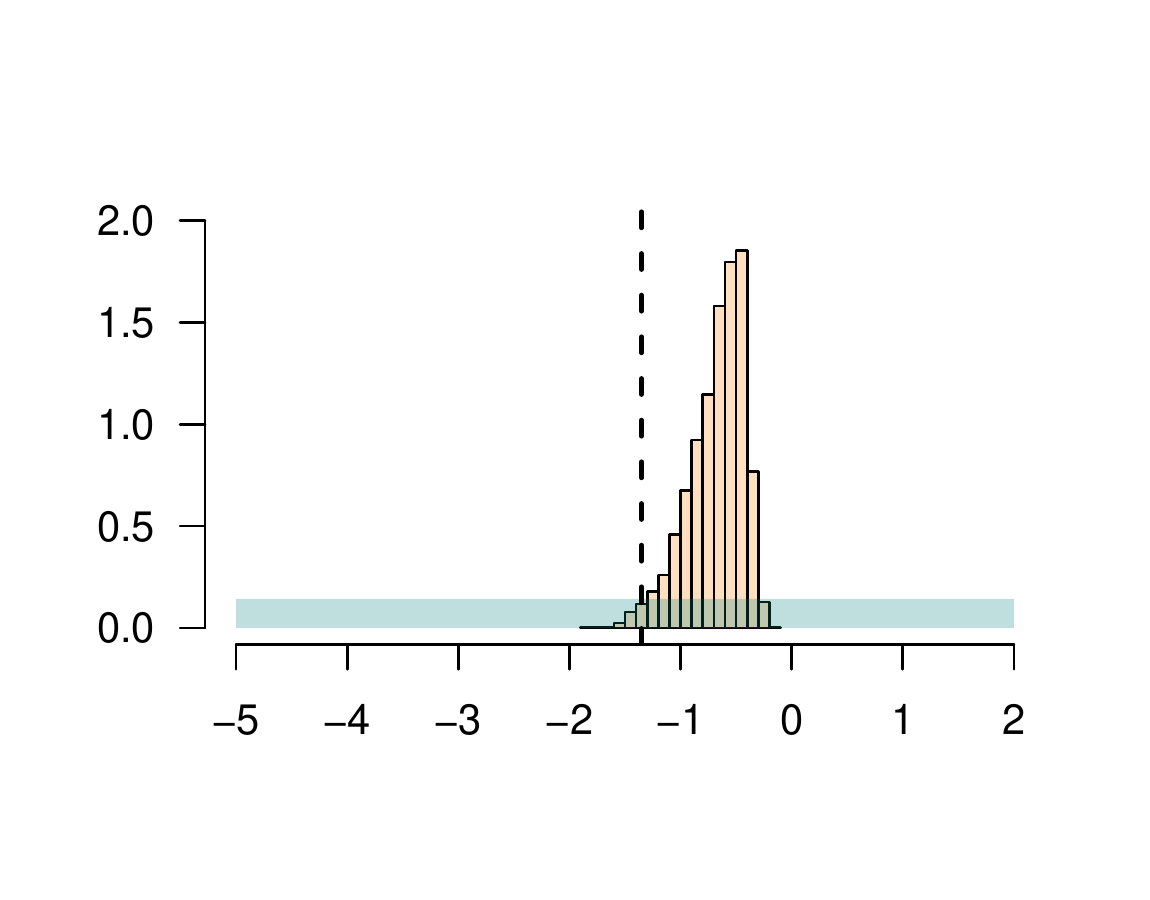}&\vspace*{-0.3in}\hspace*{-0.26in}\vspace*{-0.3in} \includegraphics[scale = 0.457]{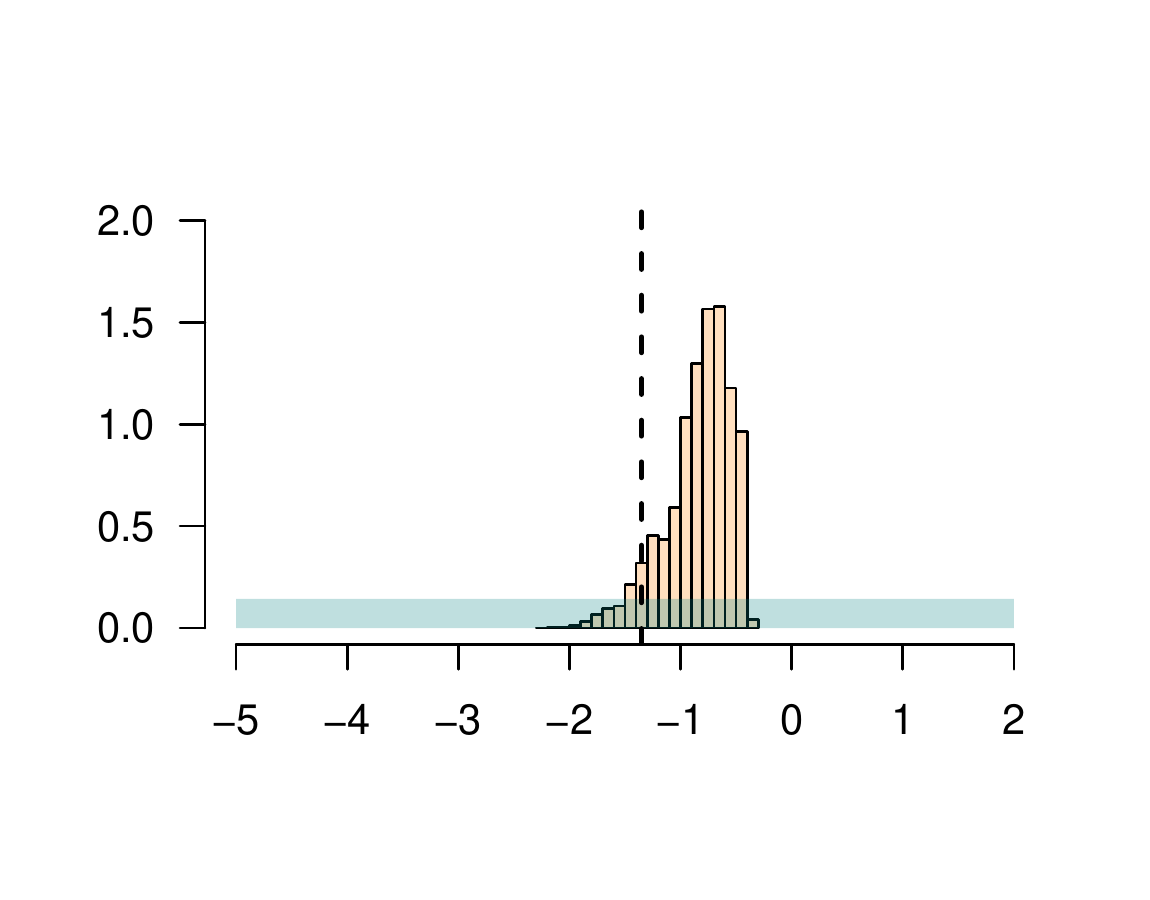}\\
		\hline \vspace*{-0.3in}\hspace*{-0.26in} \vspace*{-0.3in} \includegraphics[scale = 0.457]{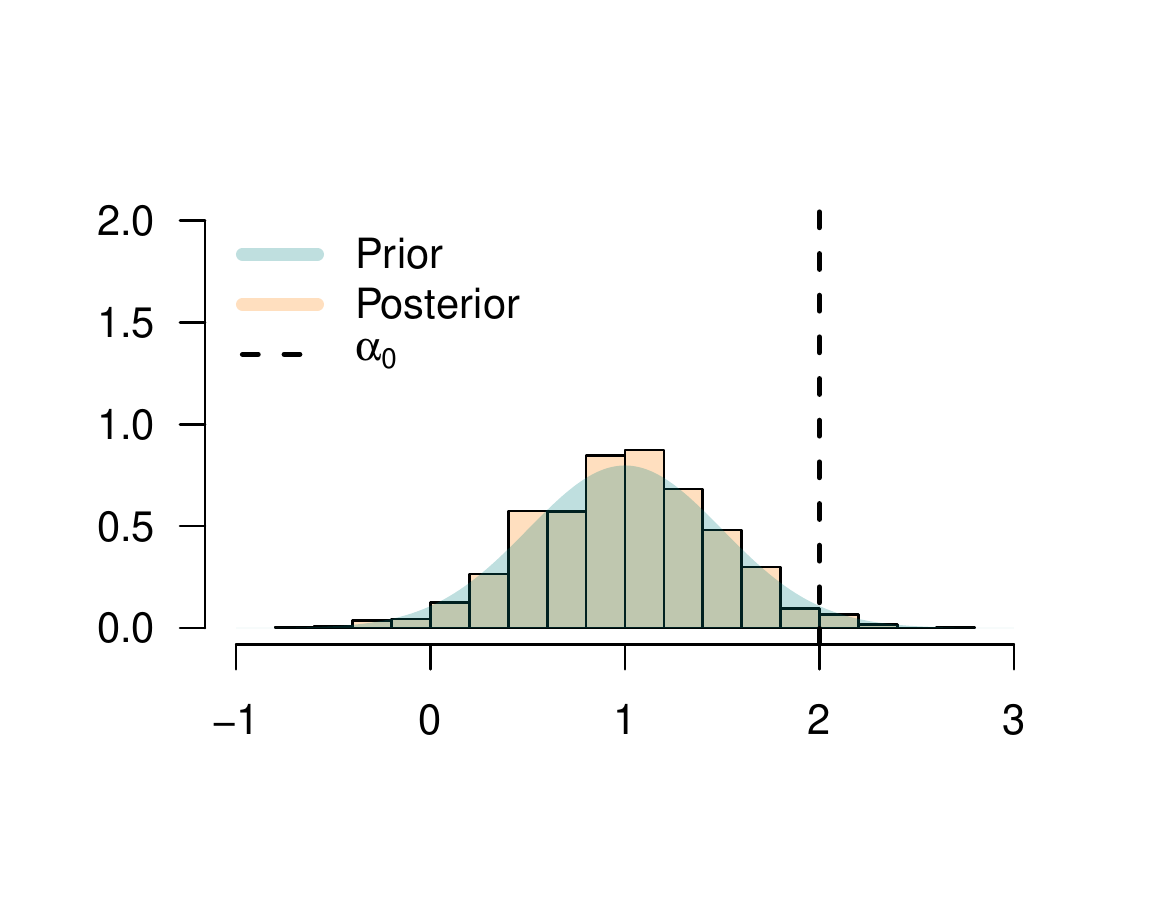}&\vspace*{-0.3in}\hspace*{-0.26in} \vspace*{-0.3in} \includegraphics[scale = 0.457]{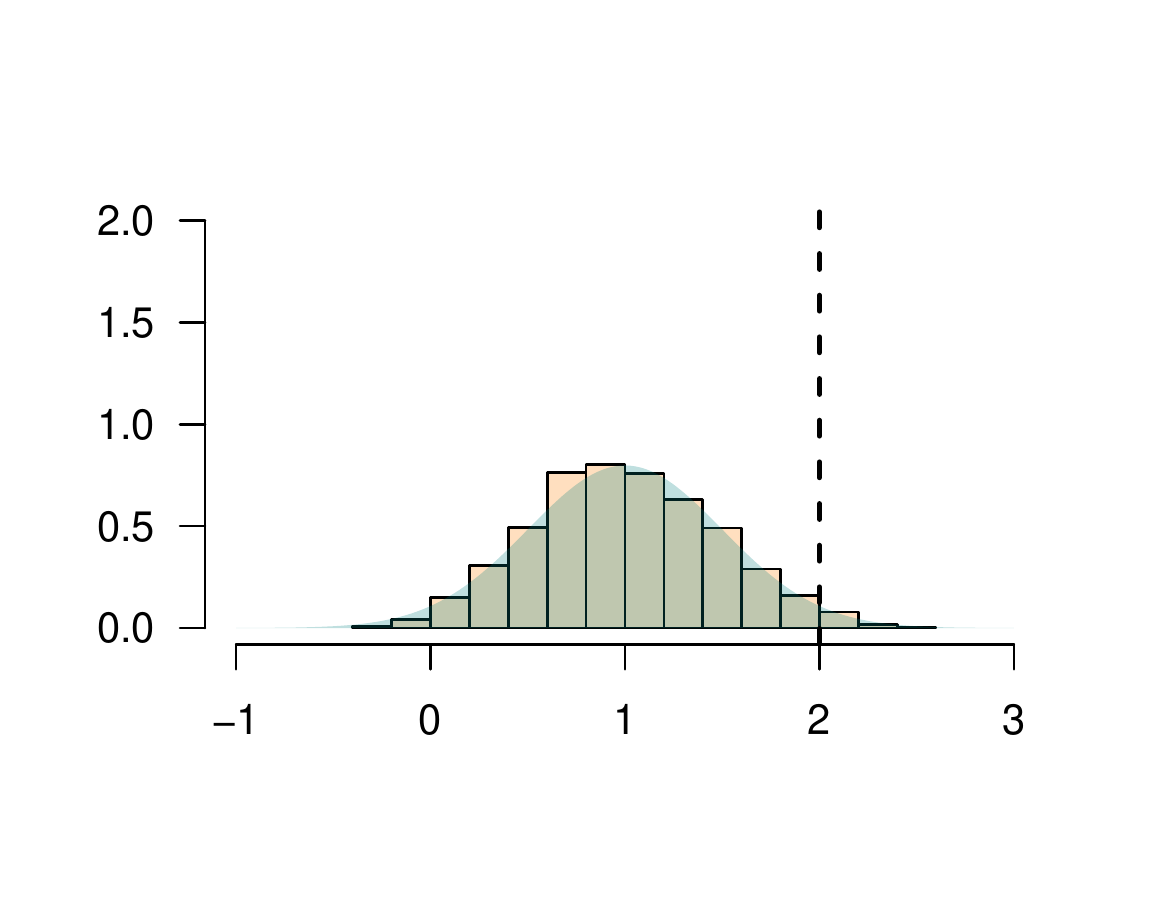}&\vspace*{-0.3in}\hspace*{-0.26in}\vspace*{-0.3in} \includegraphics[scale = 0.457]{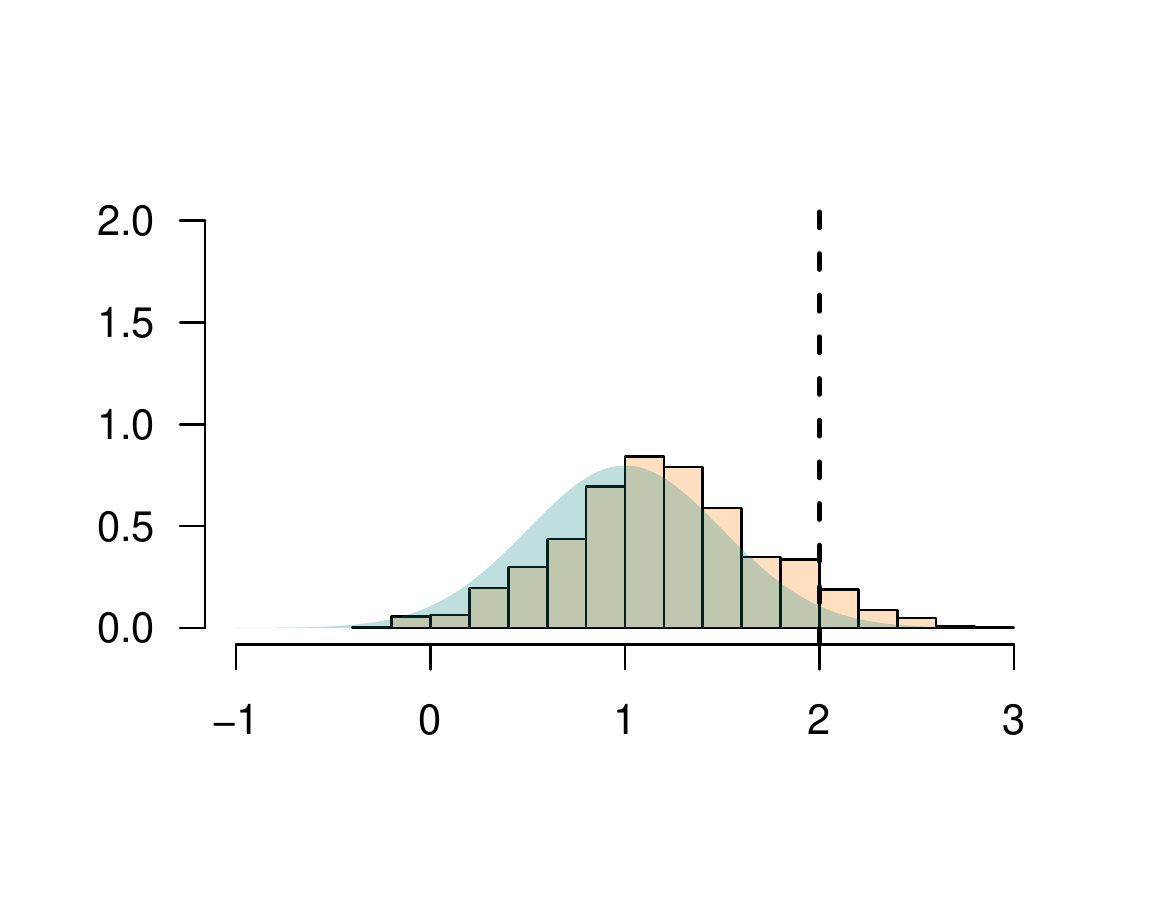}\\\hline
	\end{tabular}
	\caption{Posterior distributions with a prior on $\a$. \emph{Row 1}: Histogram of draws of the posterior distribution of the functional of interest for the different parametrisations. \emph{Row 2}: The prior and posterior of $\eta$. \emph{Row 3}: The prior and posterior of $\alpha$.}
	\label{fig:functional}
\end{figure}

From Figure~\ref{fig:functional} it can be seen that both methods perform similarly. When $n$ increases the variance of the posteriors decrease, but they will never converge to a Dirac measure due to the uncertainty in $\a$. This effect can be seen in Table~\ref{tab:alphaprior}, when compared to Table~\ref{tab:fixedalpha}. The latter illustrates the identifiability of the model when $\a = \a_0$ is fixed. The posteriors converge to a Dirac and have proper uncertainty quantification, as the theory in this paper confirms. From Figure~\ref{fig:functional} and Table~\ref{tab:alphaprior} it can be seen that the posteriors of the parametrisations asymptotically behave differently. The length of the credible intervals for the $P, \eta$ parametrisation are slightly smaller than those for $H$, but the coverage is higher. This means the location of this posterior is slightly different than the posterior of the functional in the other parametrisation, which is illustrated in in row 1 Figure~\ref{fig:functional}. In row 3, the posterior of $\a$ slightly shifts compared to the prior on $\a$ when $n$ increases, indicating the influence of the added data. From the point of view of sensitivity analysis, the $P, \eta$ parametrisation might therefore be preferable: the credible intervals more often contains the truth when the number of observations is large. If experts don't agree on the location of the prior on $\a$, but do on the direction of the relationship between $\a, \eta$ and $P$, the data might point the model in the right direction. When $\a$ is correctly specified, both parametrisations perform well and give similar results, as can be seen in the supplementary material \citep{Supplement}.

\section{Discussion}
\label{SectionDiscussion}

The results in this paper justify the use of the studied Bayesian sensitivity analyses procedures in practice, with the caveat that the utility of the answers will depend on the ability of subject matter experts to express their ideas about the outcomes for those lost to follow-up through a prior on $q$. The need to incorporate expert knowledge originates from the unidentifiability of the distribution $P_0$ of those who dropped out and is thus not special to the Bayesian paradigm. In frequentist sensitivity analysis approaches, expert knowledge is incorporated in a variety of ways, usually through bounds on or specific values for parameters that depend on unknown, $P_0$-derived quantities \citep{Liu2013}. Bayesian approaches offer perhaps an advantage by allowing the user to specify their ideas about the sensitivity parameter through a probability distribution, leading to a single summary of the sensitivity analysis.

In practice, more information may be available than assumed in this paper,
in the form of covariates $Z$ measured for each individual. In that case, the method can be
extended to sensitivity analysis on the Missing At Random (MAR) assumption,
by introducing dependence of $p, P_1$ and $q$ or $\eta, P$ and $q$ on $Z$. The approach
taken here then becomes a Bayesian implementation of the modelling strategy
proposed in \cite{RobinsRotnitzkyScharfstein}. They note that failure
of the MAR assumption $Y\indep R \given Z$ is equivalent to the conditional
distribution of $R\given Z,Y$ not being free of $Y$. They then propose a sensitivity analysis
by fitting a model of the type $\Pr(R=1\given Z,Y)=\Psi\bigl(\eta(Z)+q(Y,Z)\bigr)$,
for a given sensitivity function $q$, which depends on both $Y$ and $Z$. This is equivalent
to \eqref{eq.selectionbias} within levels of  the covariate $Z$.
In a Bayesian approach we need priors for  this extended logistic regression model,
the conditional distribution of the outcome $Y$ given $Z$, and the marginal distribution of $Z$.
For a binary outcome, nonparametric priors for
both conditional models can take the form of logistic Gaussian process models, as considered in \cite{RayvdV}, who
combine this with a Dirichlet process prior on the marginal distribution of the covariates, and
prove a Bernstein-von Mises theorem under the MAR assumption.
For survival outcomes, covariates could be added through a Cox proportional hazards model
\citep{Cox(1972)}, for which several Bernstein-von Mises theorems exist
\citep{Kim(2006),KimandLee(2003b)}. Another option of adding covariates is to use a dependent Dirichlet process prior
\citep{Quintana2020,MacEachern(1999)}. It will be of interest to extend these results
to the sensitivity setup, or from a different angle, extend the results of the present paper to
include covariates.

In this paper we considered estimating a mean in the missing data problem, but this is easily
extended to the estimation of an \emph{average causal effect}.
In the usual causal model (see \cite{RobinsHernan2020}),
there are two potential outcomes $Y^1$ and $Y^0$, corresponding to an individual
being treated $(R=1$) or not ($R=0$), and it is desired to estimate $\E Y^1-\E Y^0$,
based on observing only $Y^1$ if $R=1$ and only $Y^0$ if $R=0$. In other words,
one of the two potential outcomes is missing. The usual assumption, called \emph{conditional exchangeability} (CE)
or \emph{no unmeasured confounders}, is that $Y^r\indep R\given Z$, for $r=0,1$,
which is exactly the MAR assumption considered in the preceding paragraph.
A sensitivity analysis may investigate deviations from (CE) in exactly the manner proposed.

Within the causal setting, a different modelling perspective was considered in
\cite{McCandlessetal2007,McCandlessetal2012}, also see
\cite{Gustafson2010,GustafsonMcCandlessLawrence2018,McCandlessGustafson2017}, and
\cite{Dorieetal2016}. These authors interpret the failure of (CE) as the omission of
a covariate  in the conditioning. Thus they hypothesize the existence of a unmeasured confounder $U$
such that $Y^r\indep R\given Z,U$. Starting from a model for the distribution of
$(Y^r,R,Z,U)$, estimation of the parameter of interest is then achieved by
considering this as a missing data problem with missing data $U$. Typically $U$
is assumed to be a binary variable and its effect on the other variables is modelled
parametrically, although \cite{Dorieetal2016} allow a nonparametric relation between the outcomes
and covariates $Z$.


\section{Extended gamma posterior}
\label{SectionExtendedGammaPosterior}
In this section we study the posterior distribution for sampling from a prior equal to
the extended gamma process. We are interested
in the special case \eqref{eq.PP1relation}, but in this section adopt a general framework
independent of the main paper. We prove a functional Bernstein-von Mises theorem
and consistency of the posterior mean in the total variation norm.

For given measurable functions $b: \mX\to (0,\infty)$
and a finite, atomless Borel measure $a$ on the Polish space $\mX$, let $P_n$
be distributed as the normalised completely random measure without fixed atoms and
continuous part with intensity measure $\nu^c(dx,ds) = s^{-1}e^{-sb(x)}\,ds\,da(x)$.
Suppose that $P_n$ is used as a prior for the distribution of an i.i.d.\ sample of observations; hence
$X_1,\ldots,X_n\given P_n\iid P_n$. In this section we
derive the asymptotics of the conditional distribution of $P_n$ given $X_1,\ldots,X_n$
(i.e.\ the posterior distribution). In the intended application the function $b$ is equal to
$b=(1+e^{\eta+q})$.

We identify probability distributions $P$ on $\mX$ with the stochastic
processes $(Pf: f\in\F)$, for a given collection of measurable maps $f: \mX\to \RR$,
and consider weak convergence of sequences of such processes in $\RR^\F$ in the case of a finite collection, and
more generally in the space $\ell^\infty(\F)$ of bounded functions $z: \F\to\RR$ equipped
with the uniform norm $\|z\|_\F=\sup_{f\in \F}|z(f)|$. Weak convergence of conditional
distributions is understood in terms of the bounded Lipschitz metric: we say that $Z_n\given X_n\weak Z$
in probability or almost surely if
$\sup_{h\in \BL_1} \bigl| \E \bigl( h(Z_n)\given X_n\bigr)-\E h(Z)\bigr|\ra 0,$
in probability or almost surely. Here $\BL_1$ is the set of functions $h: \ell^\infty(\F)\to [-1,1]$ such
that $|h(z_1)-h(z_2)|\le \|z_1-z_2\|_\F$, for every $z_1, z_2\in \ell^\infty(\F)$. Let $\BB_{P_0}$ be a $P_0$-Brownian bridge process: a tight, Borel measurable, zero-mean Gaussian map into
$\ell^\infty(\F)$ with covariance function $\cov(\BB_{P_0}f,\BB_{P_0}g)=P_0(fg)-P_0fP_0g$.

\begin{theorem}[Bernstein-von Mises for extended gamma processes]
	\label{thm.BvMegp}
	Let $\F$ be a $P_0$-Donsker class with measurable envelope function $F$ such that $\int F\, da < \infty$,
	and let $b: \mX\to [\ub,\infty)$ be a measurable function, for some constant $\ub>0$.
	If there exist $q,r > 0$ with $ 1/q +1/r < 1/2$ such that $P_0b^q+P_0F^r< \infty$, then
	for $P_0^\infty$-almost every sequence $X_1,X_2,\ldots$,
	\begin{align*}
		\sqrt{n}\left(P_n-\PP_n\right) \given X_1,\ldots, X_n \weak \BB_{P_0},\qquad \text{in } \ell^\infty(\F).
	\end{align*}
	The convergence is uniform in measurable functions $b: \mX\to [\ub,\infty)$
	such that $\max_{1\le i\le n} \allowbreak b(X_i)=o(n^{1/q})$ and $\PP_nb-P_0b\ra 0$,
	almost surely, uniformly in $b$. If the latter uniform convergence is true in probability, then the
	statement is true in probability, uniformly in $b$.
	
\end{theorem}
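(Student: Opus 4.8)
The plan is to represent the normalised completely random measure $P_n$ via its unnormalised version and reduce the Bernstein--von Mises statement to a functional central limit theorem for a suitably recentred and rescaled Poisson integral. Concretely, by the posterior characterisation for normalised completely random measures (as in \cite{James2005,JamesLijoiandPrunster(2009)} or Theorem~14.56 in \cite{Ghosal2017}), the posterior law of $P_n$ given $X_1,\dots,X_n$ is that of $U_n/U_n(\mX)$, where $U_n$ is again a completely random measure, now with fixed atoms at the observed values $X_1,\dots,X_n$ and an updated continuous intensity. The atom at a point $x$ with multiplicity $m$ has a gamma-type distribution whose shape is $m$ and whose rate is $b(x)$, after the auxiliary-variable (latent scaling) device that is standard for these posteriors; the continuous part retains intensity $s^{-1}e^{-sb(x)}\,ds\,da(x)$. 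So $P_n f = \bigl(\sum_i W_i f(X_i) + J f\bigr)/\bigl(\sum_i W_i + J(1)\bigr)$, where $W_i$ are the (conditionally) gamma atom weights and $J$ is the integral against the continuous part. The key is that the continuous part $J$ is of order $\int F\,da<\infty$ — it does not grow with $n$ — while $\sum_i W_i \approx \sum_i 1/b(X_i) \asymp n$, so after dividing by $\sqrt n$ the continuous part is negligible, and $\sqrt n(P_n f - \PP_n f)$ is governed by $\sqrt n\bigl(\frac{1}{n}\sum_i W_i f(X_i) - \PP_n f\bigr)$ up to a normalising factor $\frac1n\sum_i W_i$ that converges to a constant by the law of large numbers.

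First I would set up the latent-variable representation making the conditional laws of the $W_i$ exact: conditionally on $X_1,\dots,X_n$ and a latent gamma variable, $W_i\mid X_i \sim \mathrm{Ga}(m_i, b(X_i))$ with $m_i$ the multiplicity of $X_i$, independent across distinct values, and the continuous part independent of these. Because $a$ is atomless, generically all $X_i$ are distinct so $m_i=1$ and $W_i\mid X_i\sim\mathrm{Exp}(b(X_i))$, with $\E[W_i\mid X_i]=1/b(X_i)$ and $\mathrm{var}(W_i\mid X_i)=1/b(X_i)^2$. Then I would write
\begin{align*}
\sqrt n\,(P_n f - \PP_n f)
= \frac{1}{\frac1n\sum_j W_j + \frac1n J(1)}\left[\frac{1}{\sqrt n}\sum_{i=1}^n (W_i - \PP_n\text{-weight})\,f(X_i) + \frac{1}{\sqrt n}\,Jf\right],
\end{align*}
and handle the bracket as an empirical-process-type object. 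Splitting $W_i = \E[W_i\mid X_i] + (W_i - \E[W_i\mid X_i])$, the conditional-mean part contributes $\frac{1}{\sqrt n}\sum_i (1/b(X_i) - c_n)f(X_i)$ for an appropriate centering, which I would show is asymptotically negligible in $\ell^\infty(\F)$ relative to the fluctuation term, essentially because the $1/b(X_i)$ are deterministic given the data and the recentring by $\PP_n f$ already absorbs the leading behaviour once divided by $\frac1n\sum_j W_j$. The genuinely random fluctuation term $\frac{1}{\sqrt n}\sum_i (W_i - 1/b(X_i))\,f(X_i)$ is, conditionally on the data, a sum of independent mean-zero terms; its covariance structure is $\frac1n\sum_i b(X_i)^{-2} f(X_i)g(X_i)$, which does \emph{not} by itself produce the Brownian bridge covariance $P_0(fg)-P_0fP_0g$ — so the bridge structure must instead come from the normalisation (subtraction of $P_n(1)=1$ worth of mass in the denominator together with the recentring by $\PP_n$), exactly as in the classical Dirichlet/Lo argument. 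I would make this precise by conditioning, applying a conditional multiplier/empirical-process CLT for triangular arrays (using that $\F$ is $P_0$-Donsker and that the multipliers $W_i - 1/b(X_i)$ have bounded conditional second moments $b(X_i)^{-2}\le \ub^{-2}$, with the moment conditions $P_0 b^q, P_0 F^r<\infty$ and $1/q+1/r<1/2$ exactly matching the hypotheses of a multiplier Donsker theorem à la \cite{Vandervaart1996}), and then transferring through the delta-method for the map $z\mapsto z/z(1)$ evaluated near $P_0$.

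For the uniformity in $b$, I would carry the constants through explicitly: the only places $b$ enters are the conditional variances $b(X_i)^{-2}\le \ub^{-2}$ (uniformly controlled), the law-of-large-numbers limit $\frac1n\sum_i 1/b(X_i)$, and the continuous part, whose total mass is stochastically dominated using $\int F\,da<\infty$ uniformly over $b\ge\ub$. The hypotheses $\max_{i\le n} b(X_i) = o(n^{1/q})$ and $\PP_n b - P_0 b\to 0$ uniformly in $b$ are precisely what is needed to make the negligibility bounds and the Lindeberg-type conditions uniform, and the almost-sure versus in-probability dichotomy propagates straightforwardly through the argument.

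The main obstacle I anticipate is the empirical-process step: showing that, conditionally on the data and almost surely, the triangular array $\bigl(\frac{1}{\sqrt n}\sum_i (W_i - 1/b(X_i)) f(X_i)\bigr)_{f\in\F}$ together with the deterministic recentring converges in $\ell^\infty(\F)$ to $\BB_{P_0}$, uniformly over the admissible class of functions $b$. This requires a multiplier central limit theorem for conditionally independent but \emph{non-identically distributed} multipliers with data-dependent variances, controlling the remainder from replacing $\frac1n\sum_j W_j$ by its limit, and verifying asymptotic equicontinuity over $\F$ — the role of the moment condition $1/q+1/r<1/2$ being to bound cross terms of the form $\E\bigl[(\max_i b(X_i)^{-1}) F(X_i)\bigr]$-type quantities. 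I expect the rest — the delta-method for the normalisation map, the negligibility of the continuous part, and the bookkeeping for uniformity in $b$ — to be comparatively routine once this core CLT is in place.
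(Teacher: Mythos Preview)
Your representation of the posterior has a genuine gap: you have dropped the latent (auxiliary) variable that is essential to the James--Lijoi--Pr\"unster posterior for normalised completely random measures. For the extended gamma process with L\'evy intensity $s^{-1}e^{-sb(x)}\,ds\,da(x)$, the posterior of the unnormalised measure given $X_1,\dots,X_n$ is a \emph{mixture} over a latent variable $\Lambda$ (with density proportional to $\lambda^{n-1}e^{-\psi(\lambda)}\prod_j(\lambda+b(\tilde X_j))^{-N_{j,n}}$) of completely random measures whose fixed atom at $X_i$ has weight $W_i\mid\Lambda=\lambda\sim\mathrm{Exp}\bigl(\lambda+b(X_i)\bigr)$ and whose continuous part has intensity $s^{-1}e^{-s(\lambda+b(x))}\,ds\,da(x)$. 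Your claim that $W_i\mid X_i\sim\mathrm{Exp}(b(X_i))$ and that the continuous part retains its \emph{prior} intensity corresponds to setting $\Lambda\equiv 0$, which is not the posterior. With your weights, the conditional mean of $\PP_n^W f$ is approximately $\sum_i f(X_i)b(X_i)^{-1}\big/\sum_i b(X_i)^{-1}$, which is not $\PP_n f$; the centring is wrong and you would not obtain $\BB_{P_0}$ in the limit. The multiplier-CLT programme you outline for non-identically distributed multipliers with variances $b(X_i)^{-2}$ cannot repair this, because the bias is at the level of the location, not the fluctuations.

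The paper's proof hinges precisely on the behaviour of $\Lambda$. A separate lemma shows that the mixing distribution concentrates on $[cn/\log n,\infty)$, so that $\Lambda$ is of order $n/\log n$ and hence $\Lambda+b(X_i)\approx\Lambda$ uniformly in $i$ (this is exactly where $\max_i b(X_i)=o(n^{1/q})$ is used). Consequently the rescaled weights $\tilde W_i=W_i(\Lambda+b(X_i))/(\Lambda+\ub)$ are \emph{exactly} i.i.d.\ exponential, the process $\PP_n^{\tilde W}$ is distributionally the Bayesian bootstrap (whose conditional convergence to $\BB_{P_0}$ is a known result of Praestgaard--Wellner), and the replacement of $W_i$ by $\tilde W_i$ costs $O\bigl(M_n\max_i b(X_i)/\Lambda\bigr)=o(n^{1/r+1/q-1}\log n)=o(n^{-1/2})$. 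The continuous part $\Psi$ is shown to be negligible by a separate integral estimate, again using the concentration of $\Lambda$. In short, the argument does not require a new multiplier CLT for heterogeneous multipliers at all: the latent variable $\Lambda$, once controlled, forces the weights to be asymptotically i.i.d., and the problem collapses to the classical Bayesian bootstrap. Your proposal misses this mechanism entirely.
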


\begin{proof}
	Denote  the distinct values in $X^{(n)}:=(X_1,\ldots, X_n)$ by $\tilde{X}_1,\ldots, \tilde{X}_{K_n}$
	and let $N_{1,n},\ldots,N_{K_n,n}$ be their multiplicities.
	It was shown in \cite{JamesLijoiandPrunster(2009)} (alternatively, see Theorem 14.56 of \cite{Ghosal2017}),
	that the posterior distribution of $P_n$ is a mixture over $\l$ of the distributions
	of the normalised completely random measures $\Phi_\l/\Phi_\l(\mX)$ with intensity measures
	\begin{align}
		\label{eq:mixcont}
		\nu^c_{\Phi_\Lambda \given X^{(n)}}(dx,ds \given \l) &= s^{-1}e^{-s\bigl(\l + b(x)\bigr)}\, ds\, a(dx), \\
		\label{eq:mixdisc}
		\nu^d_{\Phi_\Lambda \given X^{(n)}}(\{\tilde{X}_j \},s \given \l) &\propto s^{N_{j,n}-1}e^{-s\bigl(\l + b(\tilde{X}_j)\bigr)}\, ds,
	\end{align}
	and mixing distribution with Lebesgue density proportional to
	\begin{align}
		\label{eq.mixpost}
		\pi(\l \given X^{(n)})
		&\propto \l^{n-1}e^{-\psi(\l)}\prod_{j=1}^{K_n} \frac{1}{(\l + b(\tilde{X}_j))^{N_{j,n}}},
	\end{align}
	where $\psi(\l) := \int\!\int_0^\infty \left(1-e^{-\l s}\right)s^{-1}e^{-sb(x)}\, ds\, da(x)$. For convenience of notation, let $\Lambda$ be a random variable following the mixing density
	\eqref{eq.mixpost}, for given $X^{(n)}$. Then (see \eqref{EqRepNCRM})
	the posterior distribution can be interpreted as the distribution of $(P_n(A): A\in\X)$ for
	\begin{align*}
		P_n(A)= \frac{\Psi(A) + \sum_{j=1}^{K_n} W_{j,K_n}' \delta_{\tilde{X}_j}(A) }{\Psi(\mX) + \sum_{j=1}^{K_n} W_{j,K_n}'},
	\end{align*}
	where the random measure $\Psi$ is distributed
	as $\Psi(A) = \iint \mathbbm{1}_A(x) s \, dN^c(x,s)$ conditional on $\Lambda=\l$, for a Poisson process $N^c$
	on $\mX \times (0,\infty]$ with intensity measure \eqref{eq:mixcont}, and the variables
	$W_{j,K_n}'$ follow gamma distributions with parameters $\bigl(N_{j,n},\l + b(\tilde{X}_j)\bigr)$
	(as given in \eqref{eq:mixdisc}), independent of each other and of $\Psi$.
	By the properties of the gamma distribution, for any $j \in \{1,\ldots,K_n\}$, we can represent $W_{j,K_n}'$ as the sum
	of $N_{j,K_n}$ independent exponential random variables with intensity $\l + b(\tilde{X}_j)$.
	This allows to simplify the preceding representation to
	\begin{align}
		\label{EqDefPn}
		P_n(A)=\frac{\Psi(A) + \sum_{i=1}^{n} W_i \delta_{X_i}(A) }{\Psi(\mX) + \sum_{i=1}^{n} W_i},
	\end{align}
	where conditional on $\Lambda=\l$ the variables $W_{1},\ldots, W_{n}$ are independent exponentially
	distributed with intensity parameters $\l + b(X_i)$, and are independent of $\Psi$. We denote by $\E_\Psi$ and $\E_W$ the expectations with respect to
	$\Psi$ and the $W_i$, for given observations $X^{(n)}$ and given $\Lambda$.
	Furthermore, we denote by $\E_\Lambda$ the expectation relative to the mixture measure with density \eqref{eq.mixpost},
	for given $X^{(n)}$. In this notation we need to show that
	$\bigl|\E_{\Lambda}\E_{W}\E_\Psi h\bigl(\sqrt{n}(P_n - \PP_n)\bigr) - \E h(\BB_{P_0}) \bigr|\ra0$,
	almost surely (or in probability), uniformly in $h\in\BL_1$. Define a weighted empirical distribution by $\PP_n^W=(\sumin W_i \delta_{X_i})/(\sumin W_i)$.
	We shall show that the following two statements are true, almost surely (or in probability), after which an application of the triangle inequality gives the desired result:
	\begin{align}
		\label{eq.thmfirst}
		&\sup_{h \in \BL_1} \Bigl|\E_{\Lambda}\E_W\E_\Psi h\bigl(\sqrt{n}(P_n - \PP_n)\bigr)
		-\E_\Lambda\E_W h\bigl(\sqrt{n}(\PP_n^W - \PP_n)\bigr) \Bigr|\ra 0,\\
		\label{eq.thmsecond}
		&\sup_{h \in \BL_1} \Bigl|\E_\Lambda\E_W h\bigl(\sqrt{n}( \PP_n^W- \PP_n)\bigr) - \E h(\BB_{P_0})\Bigr|\ra0.
	\end{align}
	
	An essential ingredient for proving these statements
	is that the distributions of $\Lambda$ will give probability tending to
	one to the sets $[c n/\log n,\infty)$, for some small $c>0$,
	as is shown in Lemma~\ref{lem.concentration}. This causes that
	the contribution of $\Psi$ to \eqref{EqDefPn} is negligible relative to the contribution of $\PP_n^W$,
	leading to \eqref{eq.thmfirst}. Furthermore, it causes that the $W_i$ will be asymptotically i.i.d.\
	exponential variables (with large but almost equal intensities),
	so that the processes $\PP_n^W$ are asymptotically equivalent to
	the Bayesian bootstrap process, which is known to tend to a Brownian bridge, leading to \eqref{eq.thmsecond}. To handle \eqref{eq.thmfirst}, we start by noting that, by \eqref{EqDefPn} and the definition of $\PP_n^W$,
	$$P_n-\PP_n^W=\frac{\Psi-\Psi(\mX)\PP_n^W}{\Psi(\mX)+\sumin W_i}.$$
	Because $\|\Psi\|_\F\le \Psi F$ and $\|\PP_n^W\|_\F\le M_n:=\max_{1\le i\le n}F(X_i)$,
	it follows that the left side of  \eqref{eq.thmfirst} is bounded above by
	\begin{align*}
		&\E_\Lambda\E_W\E_\Psi \bigl\|\sqrt{n}(P_n - \PP_n^W)\bigr\|_\F
		\le \sqrt n\,\E_\Lambda\E_W\E_\Psi
		\frac{\Psi F+ \Psi(\mX)M_n}{\Psi(\mX) + \sumin W_i}.
	\end{align*}
	The expression on the right side without the leading expectation on $\Lambda$ is of the form as in
	Lemma~\ref{thm.integralexpressions}, with $f=F+1_\mX M_n$ (and with $M_n$ fixed due
	to the conditioning on $X_1,\ldots, X_n$).
	Thus the preceding display can be bounded above by
	\begin{align*}
		& \sqrt{n}\, \E_\Lambda\int_0^\infty \Bigl[e^{\int \log\bigl((\Lambda + b)/(t + \Lambda + b)\bigr)\, da} \cdot
		\int \frac{f}{t + \Lambda + b}\, da
		\cdot \prodin \frac{\Lambda + b(X_i)}{t + \Lambda +b(X_i)}\Bigr]\, dt.
	\end{align*}
	The first term of the product in the integrand is bounded above by 1, since the logarithm in the exponent is negative.
	To bound the second and third terms we split the integral over $t$ in the ranges $[0, P_0 b)$ and $[P_0 b,\infty)$.
	For $t$ in the range $[0, P_0 b)$ we use the bound $\int f/(t+\Lambda+b)\,da\le af/(\Lambda+\ub)$
	on the second term, where $af=\int f\,da$. Furthermore, in this range we bound the third term by 1. This shows that the integral over the range $[0, P_0 b)$ contributes
	no more than $af \, P_0b \, \E_\Lambda\left[ (\Lambda+\ub)^{-1}  \right] $.
	For $t$ in the range $[P_0 b,\infty)$, we bound the second term by $af/(t+\Lambda+\ub)$,
	while for the third term we use Jensen's inequality on the logarithm to see that
	\begin{align*}
		\prodin \frac{\l + b(X_i)}{t + \l + b(X_j)}
		&= e^{n\int\log\big(\frac{\l + b}{t + \l + b}\bigr)\,d\PP_n} \le \Bigl(\PP_n\frac{\l + b}{t + \l + b}\Bigr)^n
		\le \Bigl(\frac{\l + \PP_nb}{t + \l + \ub}\Bigr)^n.
	\end{align*}
	Substituting these bounds in the integral, we find
	\begin{align*}
		af\,\E_\Lambda\Bigl[  (\Lambda + \PP_n b)^n \int_{P_0 b}^\infty \frac{1}{(t + \Lambda + \ub)^{n+1}}\, dt  \Bigr]
		&=  \frac{af}{n} \E_\Lambda\Bigl(\frac{\Lambda + \PP_n b}{P_0 b + \Lambda + \ub}\Bigr)^n.
	\end{align*}
	Since $\ub>0$ and $\PP_nb- P_0b\ra 0$,  almost surely (or in probability),
	the quotient on the right is bounded above by 1 eventually, almost surely.
	Thus combining the results for the two ranges,  we find that as $n\ra\infty$, almost surely,
	\begin{align*}
		&\E_\Lambda\E_W\E_\Psi \left[\frac{\Psi f}{\Psi(\mX) + \sumin W_i} \right]
		\leq af \Bigl[\E_\Lambda \frac{P_0 b}{\Lambda + \ub} + \frac{1}{n}\Bigr]\\
		&\qquad\qquad\qquad\qquad\qquad
		\le af \Bigl[\frac{P_0b}{\ub}\Pi\Bigl(\Lambda<\frac{cn}{\log n}\given X^{(n)}\Bigr)+\frac{P_0b\, \log n}{cn}+\frac 1n\Bigr].
	\end{align*}
	We substitute $f=F+1_\mX M_n$ in the right side and multiply by $\sqrt n$ to obtain a bound on \eqref{eq.thmfirst}
	of the form
	\begin{align*}
		&\bigl(aF + M_n a(\mX)\bigr)\Bigl[\frac {P_0 b}{\ub}    \sqrt n \, \Pi\Bigl(\Lambda < \frac{c  n}{\log n}\given X^{(n)}\Bigr)
		+ \frac{P_0 b\, \log n+1}{c\sqrt n}\Bigr].
	\end{align*}
	Here $M_n=O(n^{1/r})$, almost surely, for some $r>2$,  by Lemma~\ref{lem.maximum} and
	the assumption that $P_0F^r<\infty$. This shows that the second term tends to zero.
	The first term tends to zero for sufficiently small $c>0$ in view of Lemma~\ref{lem.concentration}.
	
	We are left with showing \eqref{eq.thmsecond}. Given $\Lambda$ and $X^{(n)}$, the variables $\tilde{W}_i = W_i(\Lambda + b(X_i))/(\Lambda + \ub)$
	are i.i.d.\ exponential
	variables with parameter $\Lambda+\ub$. Since this is a scale parameter,
	the normalised variables $\tilde W_i/\sumin \tilde W_i$ are equal in distribution to the normalised variables
	$\xi_i/\sumin\xi_i$, for $\xi_1,\ldots, \xi_n$ standard exponential variables, still conditionally
	on $\Lambda$ and $X^{(n)}$. This shows that the empirical process
	$\PP_n^{\tilde W}$ with weights $\tilde{W}$, is equal in
	distribution to the Bayesian bootstrap process (see Example~3.7.9 in \cite{Vandervaart2023}), conditionally given
	$\Lambda$, but then also unconditionally (still given $X^{(n)}$), as its distribution
	does not depend on $\Lambda$. By \cite{PraestgaardandWellner(1993)}
	(or Theorem~3.6.13 in \cite{Vandervaart1996}), the Bayesian bootstrap process converges
	to the Brownian bridge process $\BB_{P_0}$. Therefore, in view of the triangle inequality
	it suffices to bound the difference in \eqref{eq.thmsecond} when replacing $\PP_n^W$ by $\PP_n^{\tilde W}$.
	This difference can be bounded by, with  $M_n=\max_{1\le i\le n} F(X_i)$ as before,
	\begin{align*}
		\E_\Lambda \E_W \sqrt{n}\|\PP_n^W-\PP_n^{\tilde W}\|_\F
		&\le  2M_n\sqrt n \,\E_\Lambda\E_W\frac{\sumin |W_i - \tilde{W}_i |}{\sumin W_i}  \\
		&= 2M_n\sqrt n  \max_{1 \leq i \leq n}  b(X_i)\,\E_\Lambda\left[ \left(\Lambda + \ub\right)^{-1} \right],
	\end{align*}
	because $|W_i-\tilde W_i|= W_i\bigl(b(X_i)-\ub\bigr)$.
	We again split the expectation over $\Lambda$ in two parts to bound this by
	\begin{align*}
		&2M_n  \max_{1 \leq i \leq n}  b(X_i)\Bigl[\frac{\sqrt{n}}\ub
		\Pi\Bigl(\Lambda < \frac{cn}{\log n} \given X^{(n)}\Bigr) +  \frac{\log n}{c\sqrt{n}}\Bigr].
	\end{align*}
	Here $M_n=O(n^{1/r})$  and $\max_{1\le i\le n}b(X_i)=O(n^{1/q})$ almost surely, by Lemma~\ref{lem.maximum}
	and the integrability assumptions on $F$ and $b$, where $1/q+1/r<1/2$.
	it follows that the leading term is $O(n^{1/2-\epsilon})$, for some $\epsilon>0$.
	Together with Lemma~\ref{lem.concentration} this shows that the expression tends
	to zero, for sufficiently small $c>0$.
\end{proof}

\begin{theorem}
	[Posterior mean]
	For $P_0^\infty$-almost every sequence $X_1,X_2,\ldots$,
	\begin{align*}
		\sup_{A\in\X}\bigl|\E \bigl(P_n(A)\given X_1,\ldots,X_n\bigr)-\PP_n(A)\bigr|=O(1/n), \qquad \text{a.s.}
	\end{align*}
\end{theorem}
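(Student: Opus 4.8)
The plan is to evaluate the posterior mean \emph{exactly}, using the mixture representation of the posterior obtained in the proof of Theorem~\ref{thm.BvMegp}, and then to estimate the resulting error term by term. Recall from \eqref{EqDefPn} that a version of the posterior law of $P_n$ is the distribution of $P_n(A)=\bigl(\Psi(A)+\sumin W_i\delta_{X_i}(A)\bigr)/\bigl(\Psi(\mX)+\sumin W_i\bigr)$, where, conditionally on $\Lambda=\l$ (with $\Lambda$ having density \eqref{eq.mixpost}), the $W_i$ are independent, $W_i\sim\mathrm{Exp}(c_i)$ with $c_i:=\l+b(X_i)$, and are independent of the Poisson integral $\Psi$ with intensity \eqref{eq:mixcont}. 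Writing $1/s=\int_0^\infty e^{-ts}\,dt$, applying Campbell's formula to $\Psi$ and using independence of the $W_i$ (essentially the computation underlying Lemma~\ref{thm.integralexpressions}, now applied to both $\Psi$ and the $W_i$-atoms), one obtains, writing $g_\l^0(t):=\prodin c_i/(t+c_i)=\E_We^{-t\sumin W_i}$ and $g_\l(t):=g_\l^0(t)\,e^{-(\psi(t+\l)-\psi(\l))}$, and fixing $\l$ and $X_1,\dots,X_n$,
\begin{align*}
\E_W\E_\Psi P_n(A)=\int_0^\infty\Bigl[\int_A\frac{da(x)}{t+\l+b(x)}+\sumin\delta_{X_i}(A)\,\frac1{t+c_i}\Bigr]g_\l(t)\,dt.
\end{align*}
Subtracting $\PP_n(A)=n^{-1}\sumin\delta_{X_i}(A)$, pulling the constant $\PP_n(A)$ inside $\E_\Lambda$, and using that $\sup_A\bigl|\E_\Lambda f(A,\Lambda)\bigr|\le\E_\Lambda\sup_A|f(A,\Lambda)|$, the task reduces to bounding, in $\E_\Lambda$-mean, both the (nonnegative) continuous contribution $\int_0^\infty\bigl(\int_\mX da(x)/(t+\l+b(x))\bigr)g_\l(t)\,dt$ and the sum $\sumin|d_i-n^{-1}|$, where $d_i:=\int_0^\infty g_\l(t)/(t+c_i)\,dt$; these two quantities together majorise $\sup_{A\in\X}\bigl|\E(P_n(A)\given X_1,\dots,X_n)-\PP_n(A)\bigr|$ (the worst $A$ being, up to the continuous part, the set of atoms $X_i$ with $d_i>n^{-1}$).

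I would carry out the estimates on the event $\{\Lambda\ge cn/\log n\}$ for a small $c>0$; by Lemma~\ref{lem.concentration} — whose proof in fact gives faster-than-polynomial decay of $\Pi(\Lambda<cn/\log n\given X^{(n)})$ for $c$ small enough — this event carries posterior probability $1-o(n^{-1})$, and on its complement the deviation is trivially $\le2$, so that piece contributes $o(n^{-1})$. Write $S:=\sumin W_i$ and $M_n:=\max_{i\le n}b(X_i)$, which is $o(n^{1/q})$ $P_0^\infty$-a.s.\ by Lemma~\ref{lem.maximum} and $P_0b^q<\infty$ (with $q>2$); in particular $M_n=o(\Lambda)$ on this event. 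Since each $W_i$ stochastically exceeds an $\mathrm{Exp}(\l+M_n)$ variable, $S$ stochastically exceeds a $\mathrm{Gamma}(n,\l+M_n)$ variable, whence $\E_W(1/S)\le(\l+M_n)/(n-1)$ and $\E_W(1/S^2)\le\frac{(\l+M_n)^2}{(n-1)(n-2)}$, while $\int_0^\infty g_\l^0\,dt=\E_W(1/S)$ and $\int_0^\infty t\,g_\l^0\,dt=\E_W(1/S^2)$. Using $g_\l\le g_\l^0$, $\int_A da/(t+\l+b)\le a(\mX)/(\l+\ub)$, and $0\le1-e^{-(\psi(t+\l)-\psi(\l))}\le\psi(t+\l)-\psi(\l)\le t\,a(\mX)/(\l+\ub)$, the continuous contribution is at most $\tfrac{a(\mX)}{\l+\ub}\E_W(1/S)=O(1/n)$, and $d_i=m_i-r_i$ with $m_i:=\int_0^\infty g_\l^0(t)/(t+c_i)\,dt$ and $r_i\le\tfrac{a(\mX)}{(\l+\ub)^2}\E_W(1/S^2)=O(1/n^2)$, so $\sumin r_i=O(1/n)$; all these bounds are uniform over $\l\ge cn/\log n$ because $M_n=o(\l)$.

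It then remains to show $\sumin|m_i-n^{-1}|=O(1/n)$. From $\sumin(t+c_i)^{-1}=-(\log g_\l^0)'(t)$ together with $g_\l^0(0)=1$, $g_\l^0(\infty)=0$, one gets $\sumin m_i=1$, and also $m_i=\E_W(W_i/S)$. A partial-fraction rearrangement of $\int_0^\infty g_\l^0(t)\bigl[(t+c_i)(t+c_j)\bigr]^{-1}dt$ gives, for $i\ne j$, the identity $m_i-m_j=(c_i^{-1}-c_j^{-1})\,\E_W(W_iW_j/S)$; since $0\le W_iW_j/S\le W_i$, this yields $|m_i-m_j|\le|c_i^{-1}-c_j^{-1}|\min(c_i^{-1},c_j^{-1})\le M_n/(\l+\ub)^3$, using $|c_i-c_j|=|b(X_i)-b(X_j)|\le M_n$ and $c_i\ge\l+\ub$. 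As $\sumin m_i=1$, this forces $|m_i-n^{-1}|\le\max_j|m_i-m_j|\le M_n/(\l+\ub)^3$, so $\sumin|m_i-n^{-1}|\le nM_n/(\l+\ub)^3\le M_n(\log n)^3/(c^3n^2)=o(1/n)$ on $\{\Lambda\ge cn/\log n\}$, since $M_n=o(n^{1/q})$ with $q>2$. Adding the continuous-contribution bound, the bound on $\sumin r_i$, and the negligible contribution of $\{\Lambda<cn/\log n\}$, and taking $\E_\Lambda$, gives $\sup_{A\in\X}\bigl|\E(P_n(A)\given X_1,\dots,X_n)-\PP_n(A)\bigr|=O(1/n)$, $P_0^\infty$-a.s.

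The main obstacle is this last step. The cancellation $\sumin(m_i-n^{-1})=0$ does not suffice by itself: bounding each $|m_i-n^{-1}|$ individually and summing $n$ of them would lose the rate. One must exploit that the exponential rates $c_i$, although all of order $\Lambda\gtrsim n/\log n$, differ from one another by only $O(M_n)=o(n^{1/2})$; the identity $m_i-m_j=(c_i^{-1}-c_j^{-1})\E_W(W_iW_j/S)$ captures exactly this and supplies the decisive extra factor $\Lambda^{-3}$ that makes $\sumin|m_i-n^{-1}|$ of smaller order than $n^{-1}$. The only other point that must be secured is a slightly quantitative form of the $\Lambda$-concentration, namely $\Pi(\Lambda<cn/\log n\given X^{(n)})=o(n^{-1})$ for $c$ small, which is delivered by the estimates underlying Lemma~\ref{lem.concentration}.
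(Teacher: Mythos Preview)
Your overall strategy is sound and the treatment of the continuous contribution and of the correction terms $r_i$ is correct, but the decisive final step contains an algebraic error that invalidates the conclusion. The partial-fraction identity you invoke is wrong: from
\[
\frac{1}{(t+c_i)(t+c_j)}=\frac{1}{c_j-c_i}\Bigl[\frac{1}{t+c_i}-\frac{1}{t+c_j}\Bigr]
\]
one obtains
\[
m_i-m_j=(c_j-c_i)\,\E_W\!\Bigl(\frac{W_iW_j}{S}\Bigr),
\]
not $(c_i^{-1}-c_j^{-1})\,\E_W(W_iW_j/S)$. Your version differs from the correct one by a factor $c_ic_j\asymp\l^2$, and it is precisely this spurious factor that produces the bound $|m_i-m_j|\le M_n/(\l+\ub)^3$ on which your argument rests. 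With the correct identity and your estimate $\E_W(W_iW_j/S)\le\E_W W_i=c_i^{-1}$ one gets only $|m_i-m_j|\le M_n/(\l+\ub)$, hence $\sum_i|m_i-n^{-1}|\le nM_n/(\l+\ub)$, which on $\{\Lambda\ge cn/\log n\}$ is of order $M_n\log n$ and not even $o(1)$. Sharpening via $\E_W(W_iW_j/S)\le(\l+M_n)/\bigl((n-1)c_ic_j\bigr)$ (from $(t+c_i)(t+c_j)\ge c_ic_j$ and $\int_0^\infty g_\l^0=\E_W(1/S)$) improves this to $\sum_i|m_i-n^{-1}|\lesssim M_n/\l$, but on the same event that is still only $O(M_n\log n/n)$, short of $O(1/n)$ by a factor $M_n\log n$.

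The underlying problem is that the event-splitting device, which replaces $\Lambda$ by the deterministic lower barrier $cn/\log n$, is too coarse for this term: Lemma~\ref{lem.concentration} controls the \emph{tail} of $\Lambda$ but not its inverse moments, and the $\log n$ slack in the threshold is fatal here. To recover $O(1/n)$ along your route you would need an estimate of the type $\E_\Lambda\bigl[(\Lambda+\ub)^{-1}\bigr]=O(1/n)$ (this is true --- for $b\equiv1$ one computes $\Lambda/(1+\Lambda)\sim\mathrm{Beta}(n,|a|)$ and $\E_\Lambda[\Lambda^{-1}]=|a|/(n-1)$ exactly --- but it requires a separate argument beyond Lemma~\ref{lem.concentration}), together with control of $M_n$. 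The continuous and $r_i$ parts of your decomposition already achieve $O(1/n)$ uniformly on the good event, so the gap is confined to this single step.
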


\begin{lemma}
	\label{thm.integralexpressions}
	Let $\Psi$ be a completely random measure without fixed atoms
	and continuous intensity measure given by
	$\nu^c(dx,ds)=s^{-1}e^{-s b(x)}\,ds\,dx$ and let $W_{1},\ldots, W_{n}$
	be independent exponential variables with means $b(X_i)$, independent of $\Psi$.
	Then, for any measurable function $f : \mX \to [0,\infty)$,
	\begin{align*}
		&\E_W\E_\Psi \left[\frac{\Psi f}{\Psi(\mX) + \sumin W_i} \right] =\int_0^\infty \Bigl[e^{\int \log\left(\frac{b}{t+b}\right)\, da}
		\cdot \int \frac{f}{t + b}\, da
		\cdot \prod_{i=1}^n \frac{b(X_i)}{t + b(X_i)}\Bigr]\, dt.
	\end{align*}
\end{lemma}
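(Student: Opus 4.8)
The plan is to turn the ratio into an ordinary integral via the elementary identity $1/u=\int_0^\infty e^{-tu}\,dt$ and then recognise the two resulting factors as, respectively, a Laplace transform of the exponential variables $W_i$ and a Laplace functional of the Poisson random measure underlying $\Psi$. First I would record that, under the standing assumption $b\ge\ub>0$, one has $\E_\Psi\Psi(\mX)=\int\!\!\int_0^\infty s\cdot s^{-1}e^{-sb(x)}\,ds\,da(x)=\int b^{-1}\,da\le a(\mX)/\ub<\infty$, hence $\Psi(\mX)<\infty$ a.s., so that $S:=\Psi(\mX)+\sumin W_i$ is strictly positive and finite a.s. Writing $1/S=\int_0^\infty e^{-tS}\,dt$, using that $f\ge0$ so that Tonelli permits moving the $t$-integral outside the expectations, and using the independence of $\Psi$ and $W_1,\ldots,W_n$, one obtains
$$\E_W\E_\Psi\Bigl[\frac{\Psi f}{\Psi(\mX)+\sumin W_i}\Bigr]=\int_0^\infty\E_\Psi\bigl[(\Psi f)\,e^{-t\Psi(\mX)}\bigr]\,\prodin\E_{W_i}\bigl[e^{-tW_i}\bigr]\,dt.$$

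The two expectations under the $t$-integral are then classical. For the exponential variable $W_i$, its Laplace transform is $\E_{W_i}[e^{-tW_i}]=b(X_i)/(t+b(X_i))$, which produces the product term in the claimed formula. For the Poisson integral, write $\Psi f=\int\!\!\int_0^\infty f(x)s\,dN^c(x,s)$ and $\Psi(\mX)=\int\!\!\int_0^\infty s\,dN^c(x,s)$ for the Poisson process $N^c$ with intensity $\nu^c(dx,ds)=s^{-1}e^{-sb(x)}\,ds\,da(x)$; then Mecke's formula (equivalently, differentiating the exponential formula $\E\exp\bigl(-\int\!\!\int_0^\infty(ts-\theta f(x)s)\,dN^c\bigr)=\exp\bigl(-\int\!\!\int_0^\infty(1-e^{-ts+\theta f(x)s})\,d\nu^c\bigr)$ at $\theta=0$) gives
$$\E_\Psi\bigl[(\Psi f)\,e^{-t\Psi(\mX)}\bigr]=\Bigl(\int\!\!\int_0^\infty f(x)s\,e^{-ts}\,\nu^c(dx,ds)\Bigr)\exp\Bigl(-\int\!\!\int_0^\infty(1-e^{-ts})\,\nu^c(dx,ds)\Bigr).$$
Substituting $\nu^c(dx,ds)=s^{-1}e^{-sb(x)}\,ds\,da(x)$, the first factor equals $\int\!\!\int_0^\infty f(x)e^{-s(t+b(x))}\,ds\,da(x)=\int\frac{f}{t+b}\,da$, while the Frullani identity $\int_0^\infty s^{-1}(e^{-sb}-e^{-s(t+b)})\,ds=\log((t+b)/b)$ turns the exponent into $-\int\log((t+b)/b)\,da=\int\log(b/(t+b))\,da$. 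Plugging the three factors back into the $t$-integral reproduces exactly the asserted integrand, which finishes the proof.

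The steps requiring a word of justification are routine. The interchange of $\int_0^\infty\,dt$ with the expectations is legitimate since every integrand is nonnegative; Mecke's formula (equivalently the Campbell-type identity for the product of a linear and an exponential functional of a Poisson process) holds for nonnegative measurable integrands, so no integrability of $f$ beyond measurability is needed and the identity is in fact valid as an equality in $[0,\infty]$; and if one prefers the differentiation-under-the-integral route, it suffices to reduce first to bounded $f$ by monotone convergence on both sides, after which $f(x)s\,e^{-ts/2}$ serves as a $\nu^c$-integrable dominating function. I do not expect a genuine obstacle: the only real care needed is in the Poisson-process bookkeeping — evaluating $\E_\Psi[(\Psi f)\,e^{-t\Psi(\mX)}]$ correctly and spotting the Frullani integral in the exponent — after which the formula drops out by substitution.
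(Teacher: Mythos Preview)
Your argument is correct and follows what is essentially the canonical route for such identities: the exponential representation $1/S=\int_0^\infty e^{-tS}\,dt$, independence to factor out the Laplace transforms of the $W_i$, and the Mecke/Campbell formula together with the Frullani integral (the paper's Lemma~\ref{Lemmapsi}) to evaluate $\E_\Psi[(\Psi f)e^{-t\Psi(\mX)}]$. This is the same approach the paper takes (its proof is deferred to the supplement, but the placement of Lemma~\ref{Lemmapsi} immediately after this statement signals the identical computation).
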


\begin{lemma}[Maximum of iid random variables]
	\label{lem.maximum}
	If $X_1,\ldots,X_n$ are i.i.d.\ random variables with $\E| X_i|^r < \infty$ for some $r > 0$, then $\max_{1 \leq i \leq n} \left|X_i \right| n^{-1/r} \stackrel{\text{as}}{\longrightarrow} 0$.
	If $X_{1,n},\ldots, \allowbreak X_{n,n}$ are i.i.d.\ random variables for every $n$ such that the variables $| X_{1,n}|^r $
	are uniformly integrable, then the assertion is true in probability.
\end{lemma}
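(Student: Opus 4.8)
The plan is to prove the almost-sure statement by a direct first Borel--Cantelli argument applied to the individual terms, then to transfer the conclusion to the running maximum by isolating the finitely many early terms; the triangular-array version has to be handled separately, since Borel--Cantelli is no longer available, by a union bound combined with a truncation tuned to the threshold $\e n^{1/r}$ that feeds on uniform integrability.

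First I would fix $\e>0$ and estimate, using the i.i.d.\ assumption and the elementary bound $\sum_{m\ge1}\Pr(W>m)\le\E W$ valid for any nonnegative $W$ (applied to $W=|X_1|^r/\e^r$),
$$\sum_{m\ge1}\Pr\bigl(|X_m|>\e m^{1/r}\bigr)=\sum_{m\ge1}\Pr\bigl(|X_1|^r>\e^r m\bigr)\le \e^{-r}\,\E|X_1|^r<\infty.$$
By the first Borel--Cantelli lemma, almost surely $|X_m|\le\e m^{1/r}$ for all sufficiently large $m$; running $\e$ through a sequence $\e_k\downarrow0$ then gives $|X_m|\,m^{-1/r}\ra0$ a.s. To pass to the maximum I would work on the probability-one event where this holds, fix $\e>0$, choose $N$ with $|X_m|\le\e m^{1/r}$ for all $m\ge N$, and for $n\ge N$ split
$$\max_{1\le i\le n}|X_i|\le\max\Bigl(\max_{1\le i<N}|X_i|,\ \e n^{1/r}\Bigr),$$
so that, dividing by $n^{1/r}$ and letting $n\ra\infty$, $\limsup_n\max_{1\le i\le n}|X_i|\,n^{-1/r}\le\e$, since the finitely many early terms $\max_{i<N}|X_i|$ are a fixed number killed by the factor $n^{-1/r}$. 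Letting $\e\downarrow0$ yields the a.s.\ claim.

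For the triangular array I would argue in probability. A union bound together with Markov's inequality applied after truncating at the threshold gives, for fixed $\e>0$,
$$\Pr\Bigl(\max_{1\le i\le n}|X_{i,n}|>\e n^{1/r}\Bigr)\le n\,\Pr\bigl(|X_{1,n}|^r>\e^r n\bigr)\le \e^{-r}\,\E\bigl[|X_{1,n}|^r\,1_{\{|X_{1,n}|^r>\e^r n\}}\bigr],$$
where the last step uses $\e^r n\,1_{\{|X_{1,n}|^r>\e^r n\}}\le|X_{1,n}|^r\,1_{\{|X_{1,n}|^r>\e^r n\}}$. The truncation sets have probability at most $\e^{-r}n^{-1}\sup_m\E|X_{1,m}|^r\ra0$, so uniform integrability of $\{|X_{1,n}|^r\}_n$ forces the right-hand side to $0$; hence $\max_{1\le i\le n}|X_{i,n}|\,n^{-1/r}\to0$ in probability.

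Nothing here is deep. The single step I would flag as needing a moment's care, rather than being a one-line Markov estimate, is the transfer from convergence of the individual terms $|X_m|\,m^{-1/r}$ to convergence of the running maximum in the almost-sure case: one must separate the first $N$ (random, but finite) indices, over which $|X_i|$ is merely a fixed bounded number, from the remaining indices, over which the uniform bound $\e m^{1/r}$ is available. Everything else reduces to a Borel--Cantelli summation or a union bound with truncation.
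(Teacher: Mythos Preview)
Your proof is correct and complete. The paper relegates the proof of this lemma to the supplementary material, so a line-by-line comparison is not possible from the main text, but your argument is the standard one: Borel--Cantelli on the summable tail probabilities $\sum_m\Pr(|X_1|^r>\e^r m)\le\e^{-r}\E|X_1|^r$ for the almost-sure part, and a union bound combined with the tail-truncation characterisation of uniform integrability for the triangular-array part. There is essentially only one natural route to this result, and you have taken it; the paper's supplementary proof is almost certainly the same.
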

\begin{lemma}
	\label{Lemmapsi}
	For any nonnegative measurable function $b: \mX\to(0,\infty)$, and any $\l>0$,
	we have $\int\!\int_0^\infty (1-e^{-\l s})s^{-1}e^{-sb}\, ds\, da= \int \log(1+\l/b)\, da$.
\end{lemma}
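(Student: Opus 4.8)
The asserted identity is an integral over $a$ of a relation that holds pointwise in the integration variable, so the plan is to first establish the scalar Frullani-type identity
\begin{equation*}
	\int_0^\infty (1-e^{-\l s})\,s^{-1}e^{-s\beta}\,ds=\log\Bigl(1+\frac{\l}{\beta}\Bigr),
	\qquad \beta>0,\ \l>0,
\end{equation*}
and then set $\beta=b(y)$ and integrate both sides with respect to $da(y)$. The measurability of $y\mapsto\log(1+\l/b(y))$ follows from that of $b$, and since both sides are nonnegative the final integration is unproblematic (the identity holds in $[0,\infty]$, with both sides simultaneously finite or infinite).

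For the scalar identity, I would first note that the integrand is genuinely integrable in $s$: as $s\downarrow 0$ one has $(1-e^{-\l s})/s\to\l$, so the integrand stays bounded, and as $s\to\infty$ it is dominated by $e^{-s\beta}$, which is integrable; hence $G(\l):=\int_0^\infty(1-e^{-\l s})s^{-1}e^{-s\beta}\,ds$ is a finite, well-defined function of $\l$ with $G(0)=0$. The main step is to differentiate under the integral sign: formally
\begin{equation*}
	G'(\l)=\int_0^\infty e^{-\l s}\,e^{-s\beta}\,ds=\frac1{\l+\beta}.
\end{equation*}
This interchange is justified by dominated convergence, since $\partial_\l\bigl[(1-e^{-\l s})s^{-1}e^{-s\beta}\bigr]=e^{-s(\l+\beta)}$ is bounded in absolute value by $e^{-s\beta}\in L^1((0,\infty),ds)$, uniformly over all $\l\ge 0$. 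Integrating this derivative from $0$ to $\l$ and using $G(0)=0$ gives
\begin{equation*}
	G(\l)=\int_0^\l\frac{d\mu}{\mu+\beta}=\log(\l+\beta)-\log\beta=\log\Bigl(1+\frac{\l}{\beta}\Bigr),
\end{equation*}
as required. (Alternatively one could quote Frullani's integral $\int_0^\infty\frac{f(as)-f(cs)}{s}\,ds=\bigl(f(0)-f(\infty)\bigr)\log(c/a)$ with $f(u)=e^{-u}$, $a=\beta$, $c=\beta+\l$.)

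There is essentially no serious obstacle here: the only point requiring a word of care is the differentiation-under-the-integral step, and the uniform domination above disposes of it immediately. Substituting $\beta=b(y)$ into the scalar identity and integrating over the finite measure $a$ then yields
$\int\!\int_0^\infty(1-e^{-\l s})s^{-1}e^{-sb}\,ds\,da=\int\log(1+\l/b)\,da$, completing the proof.
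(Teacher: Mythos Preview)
Your proof is correct. The paper defers the proof of this lemma to the supplementary material, so a line-by-line comparison is not possible from the main text provided, but the identity is the classical Frullani integral $\int_0^\infty\bigl(e^{-\beta s}-e^{-(\beta+\l)s}\bigr)s^{-1}\,ds=\log\bigl((\beta+\l)/\beta\bigr)$, and your differentiate-then-integrate derivation (with the clean dominated-convergence justification via the bound $e^{-s\beta}$) is one of the two standard routes to it---the other being the direct Frullani citation you mention parenthetically. Either is almost certainly what the supplement does; there is nothing to add.
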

\begin{lemma}[Concentration of the mixing distribution]
	\label{lem.concentration}
	For $\Lambda$ conditionally distributed given $X^{(n)}=(X_1,\ldots,X_n)$ according to
	density given in \eqref{eq.mixpost}, for a measurable function $b: \mX\to [\ub,\infty)$
	with $P_0b<\infty$, we have for $P_0^\infty$-almost every sequence $X_1,X_2,\ldots$,
	for any $d \in \RR^+$ and $0 < c < \ub/\bigl(a(\mX) + 1 + d)\bigr)$,
	\begin{align*}
		n^d\,\Pi\left(\Lambda < \frac{cn}{\log n}  \given X^{(n)}\right) \ra 0.
	\end{align*}
	The convergence is
	uniform in functions $b: \mX\to[\ub,\infty)$ such that
	the sequence $\PP_nb$ remains uniformly bounded, almost surely. If the functions are
	uniformly bounded in probability, then the convergence is true uniformly in probability.
\end{lemma}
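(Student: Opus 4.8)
The plan is to estimate directly the ratio of the integral of the unnormalised mixing density in \eqref{eq.mixpost} over $[0,cn/\log n)$ to its integral over all of $(0,\infty)$. First I would rewrite the density: since the distinct values $\tilde X_1,\dots,\tilde X_{K_n}$ carry multiplicities summing to $n$, we have $\prod_{j=1}^{K_n}(\l+b(\tilde X_j))^{-N_{j,n}}=\prod_{i=1}^n(\l+b(X_i))^{-1}$, and by Lemma~\ref{Lemmapsi} the exponent equals $\psi(\l)=\int\log(1+\l/b)\,da$, which satisfies $0\le\psi(\l)\le a(\mX)\log(1+\l/\ub)$ because $b\ge\ub$. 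Writing $g(\l)=\l^{n-1}e^{-\psi(\l)}\prod_{i=1}^n(\l+b(X_i))^{-1}$ for the unnormalised density, the target becomes an upper bound of the shape
$$\Pi\Bigl(\Lambda<\frac{cn}{\log n}\given X^{(n)}\Bigr)=\frac{\int_0^{cn/\log n}g(\l)\,d\l}{\int_0^\infty g(\l)\,d\l}\le\frac{C}{\log n}\,n^{\,1+a(\mX)-\ub/((1+\delta)c)},$$
valid for every small $\delta>0$ and all large $n$, with $C$ depending only on $\ub$, $a(\mX)$, $c$ and an eventual a.s.\ bound $M$ for $\PP_nb$. Multiplying by $n^d$ and choosing $\delta$ small enough that $\ub/((1+\delta)c)>a(\mX)+1+d$ --- which is possible precisely because the hypothesis gives $c<\ub/(a(\mX)+1+d)$ --- then forces $n^d\,\Pi(\Lambda<cn/\log n\given X^{(n)})=O(n^{-\epsilon}/\log n)\to0$, and since $C$ depends on $b$ only through $M$, the convergence is uniform over functions $b$ with $\PP_nb\le M$.

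For the numerator I would use $e^{-\psi}\le1$ and $b(X_i)\ge\ub$ to get $g(\l)\le\l^{n-1}(\l+\ub)^{-n}$, and then evaluate the resulting integral exactly through the substitution $u=\l/(\l+\ub)$, obtaining
$$\int_0^T\l^{n-1}(\l+\ub)^{-n}\,d\l=\int_0^{T/(T+\ub)}\frac{u^{n-1}}{1-u}\,du\le\frac{T+\ub}{n\ub}\,e^{-n\ub/(T+\ub)}.$$
Inserting $T=cn/\log n$ and bounding $T+\ub\le(1+\delta)T$ for $n$ large turns this into $\frac{(1+\delta)c}{\ub\log n}\,n^{-\ub/((1+\delta)c)}$, the bound on $\int_0^{cn/\log n}g$ used above. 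It is essential to compute this integral rather than bound it crudely by its length times its maximum: the crude bound loses a factor $n$ and would only deliver the weaker range $c<\ub/(a(\mX)+2+d)$.

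For the denominator I would lower bound $g$ using Jensen's inequality applied to $\log$, namely $\prod_{i=1}^n(\l+b(X_i))^{-1}\ge(\l+\PP_nb)^{-n}$, together with $e^{-\psi(\l)}\ge(1+\l/\ub)^{-a(\mX)}$, and then integrate this lower bound over the window $I_n=[(n-1)\PP_nb,\,n\PP_nb]$, on which $\l^{n-1}(\l+\PP_nb)^{-n}$ is comparable to its maximum, of order $1/(n\PP_nb)$. The same substitution gives $\int_{I_n}\l^{n-1}(\l+\PP_nb)^{-n}\,d\l\ge c_0/n$ for an absolute constant $c_0$ and all large $n$, independently of $\PP_nb$, while on $I_n$ we have $e^{-\psi(\l)}\ge(1+nM/\ub)^{-a(\mX)}\gtrsim n^{-a(\mX)}$; hence $\int_0^\infty g\ge C_2\,n^{-1-a(\mX)}$ with $C_2$ depending only on $\ub$, $M$ and $a(\mX)$. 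The boundedness $\PP_nb\le M$ needed here follows from the strong law and $P_0b<\infty$ in the basic assertion, is assumed outright in the uniform one, and the in-probability version follows by the standard device of splitting on the event $\{\sup_b\PP_nb\le M\}$.

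The main obstacle is the calibration of exponents so that the surviving power of $n$ beats $n^d$ for the sharp threshold $c<\ub/(a(\mX)+1+d)$ rather than a smaller one; this rests on the exact evaluation of the numerator integral and on the Jensen reduction in the denominator, which isolates $\PP_nb$ as the only data-dependent quantity and thereby makes the whole estimate uniform in $b$.
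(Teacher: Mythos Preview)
Your argument is correct. The key steps --- simplifying $\prod_j(\l+b(\tilde X_j))^{-N_{j,n}}$ to $\prod_i(\l+b(X_i))^{-1}$, using Lemma~\ref{Lemmapsi} to bound $\psi$, the substitution $u=\l/(\l+\ub)$ for the numerator, Jensen's inequality $\prod_i(\l+b(X_i))^{-1}\ge(\l+\PP_nb)^{-n}$ for the denominator, and the choice of the window $I_n$ around the mode $(n-1)\PP_nb$ --- all check out, and the exponent bookkeeping yields exactly the stated threshold $c<\ub/(a(\mX)+1+d)$. The uniformity in $b$ indeed reduces to uniformity in an upper bound on $\PP_nb$, and the in-probability version follows as you indicate.

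The paper relegates the proof of this lemma to the supplementary material, so a line-by-line comparison is not possible from the text provided. That said, your approach --- bounding the ratio of integrals of the unnormalised density directly, with the sharp evaluation of $\int_0^T\l^{n-1}(\l+\ub)^{-n}\,d\l$ rather than a crude length-times-supremum bound --- is the natural route and is likely close in spirit to what the authors do. Your observation that the crude bound would only give the weaker range $c<\ub/(a(\mX)+2+d)$ is well taken and shows you understand where the precision is needed.
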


\begin{supplement}
\stitle{Correctly specified prior on $\a$, efficiency theory and proofs}
\sdescription{In the supplementary material, we provide additional simulation results when the prior on the sensitivity parameter is correctly specified. It also contains a derivation of the efficiency theory and some proofs not contained in the main manuscript.}
\end{supplement}

\bibliographystyle{ba}
\bibliography{Mybib}


\end{document}


\maketitle

\section{Correctly specified \texorpdfstring{$\a$}{a} simulations}
In this section we discuss the results of the simulation study when the prior on $\alpha$ is correctly specified. 

As discussed in the main paper, one of the differences between the two parametrisations is the posterior of $\a$ being dissimilar from the prior in the $P, \eta$ parametrisation. When the prior on $\a$ is misspecified, this effect can be seen in the simulations. This setting is natural, as it is not assumed that the model is correct. However, it is still relevant to understand the behaviour of the posteriors when $\a$ is correctly specified. We use the same setting as in Section~6 of the main paper, the only difference being that the mean of the prior on $\a$ is taken to be $\a_0 = 2$. The same results as in the main paper can be found in Figure~\ref{fig:posteriors} and the coverage for the different priors can be found in Table~\ref{tab:alphaprior1} and Table~\ref{tab:alphaprior2}.
\begin{table}[!ht]
	\centering
	\begin{tabular}{r|c|c|c}
		& $n = 100$ & $n = 1000$ & $n = 10000$ \\
		& Coverage | Length & Coverage | Length & Coverage | Length \\
		\hline  $H, \a$ &  \hspace*{10pt}0.560 |  0.946 & \hspace*{10pt}0.328 | 0.700 & \hspace*{10pt}0.026 | 0.662 \\
		$P, \eta, \a$ & \hspace*{10pt}0.531 | 0.932 & \hspace*{10pt}0.304 | 0.687 & \hspace*{10pt}0.131 | 0.635 
	\end{tabular}
	\caption{Coverage of the $90 \%$-credible intervals of the posterior distribution of the functional of interest for the different parametrisations with $\alpha \sim N(1,0.25)$.}
	\label{tab:alphaprior1}
\end{table}
\begin{table}[!ht]
	\centering
	\begin{tabular}{r|c|c|c}
		& $n = 100$ & $n = 1000$ & $n = 10000$ \\
		& Coverage | Length & Coverage | Length & Coverage | Length \\
		\hline  $H, \alpha$ & \hspace*{10pt}0.830 |  1.024 & \hspace*{10pt}0.991 | 0.735 & \hspace*{10pt}1.000 | 0.668 \\
		$P, \eta, \alpha$ & \hspace*{10pt}0.816 | 1.011 & \hspace*{10pt}0.986 | 0.724 & \hspace*{10pt}1.000 | 0.650
	\end{tabular}
	\caption{Coverage of the $90 \%$-credible intervals of the posterior distribution of the functional of interest for the different parametrisations with $\alpha \sim N(2,0.25)$.}
	\label{tab:alphaprior2}
\end{table}

\begin{table}[!ht]
	\centering
	\begin{tabular}{|m{0.3\textwidth}|m{0.3\textwidth}|m{0.3\textwidth}|}
		\hline
		\begin{mycenter}[2pt] $\mathbf{n = 100}$ \end{mycenter} & \begin{mycenter}[2pt]
			$\mathbf{n=1000}$	\end{mycenter} &  \begin{mycenter}[2pt]
			$\mathbf{n=10000}$	\end{mycenter}  \\[-15pt] \hline \vspace*{-0.3in}\hspace*{-0.26in} \vspace*{-0.3in} \includegraphics[scale = 0.457]{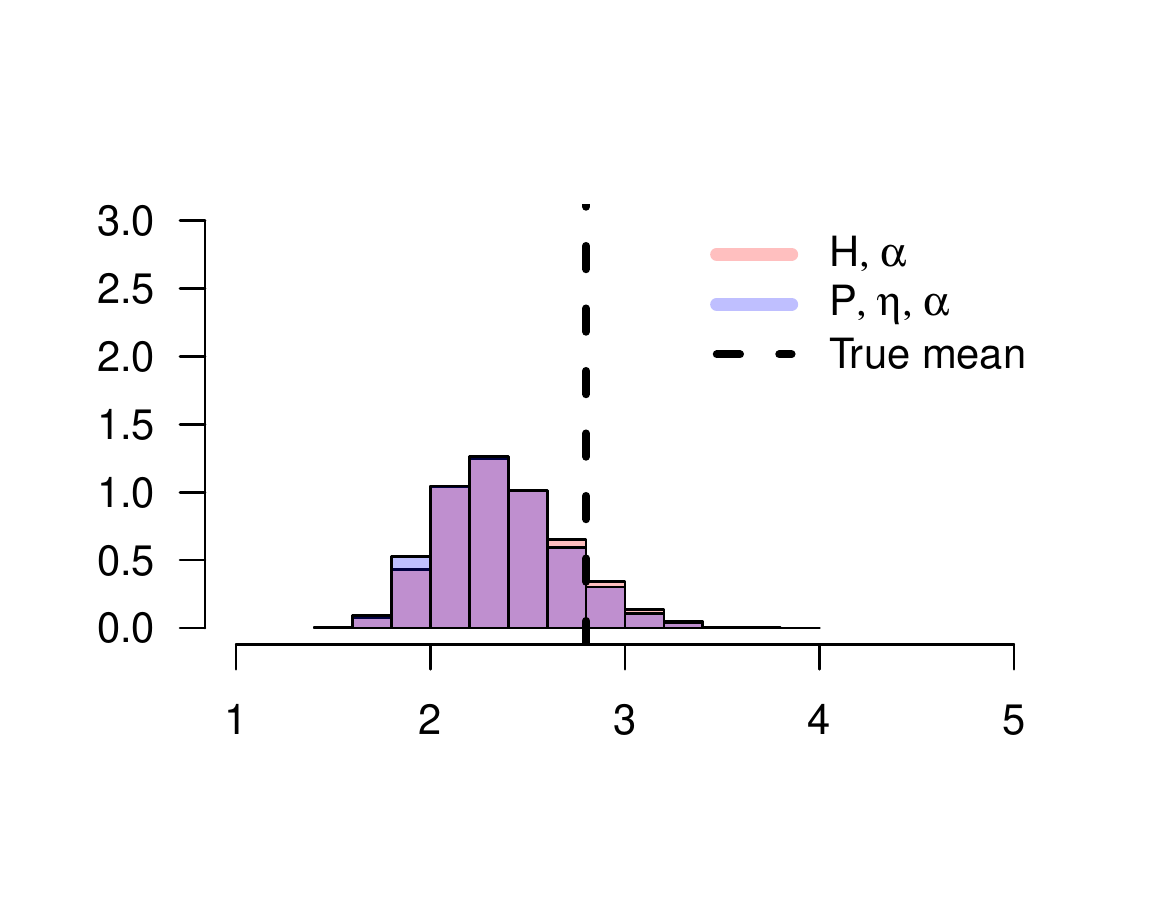}&\vspace*{-0.3in}\hspace*{-0.26in} \vspace*{-0.3in} \includegraphics[scale = 0.457]{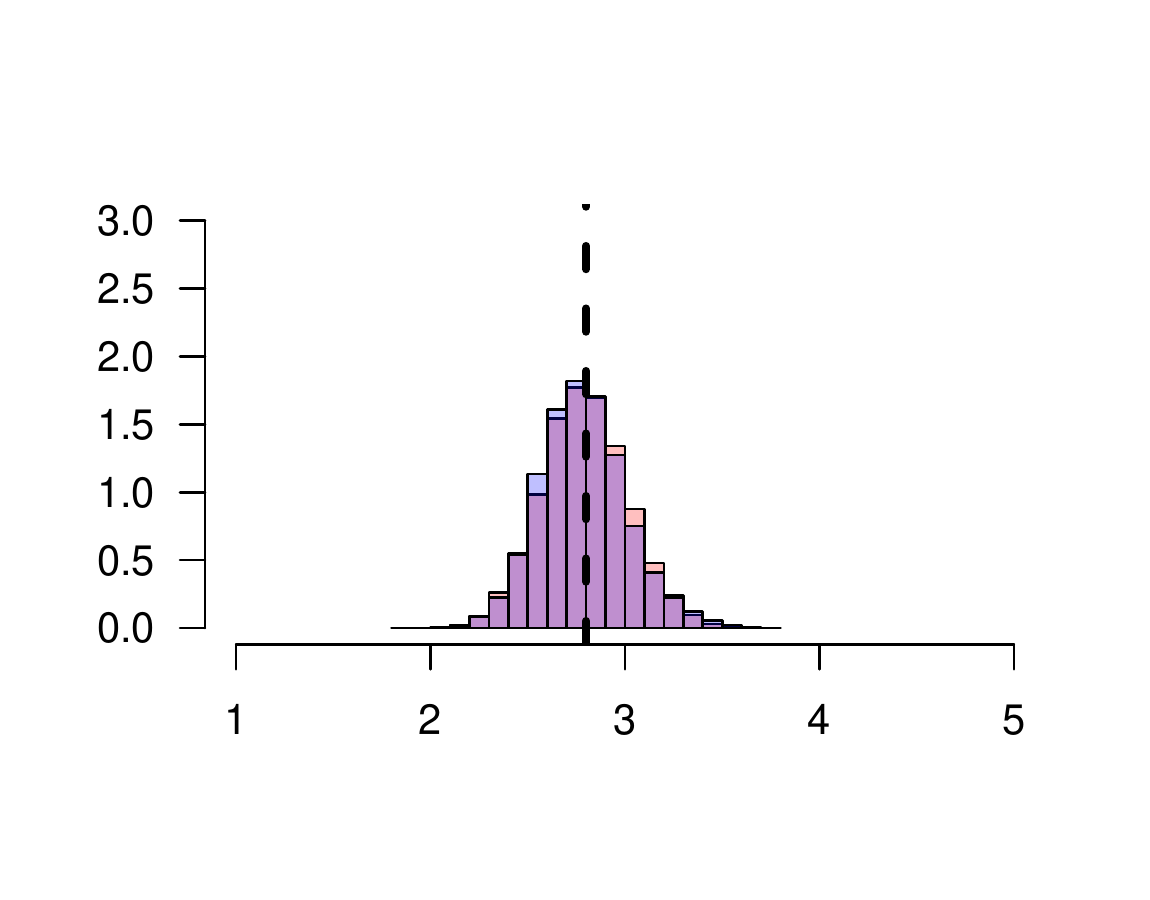}&\vspace*{-0.3in}\hspace*{-0.26in}\vspace*{-0.3in} \includegraphics[scale = 0.457]{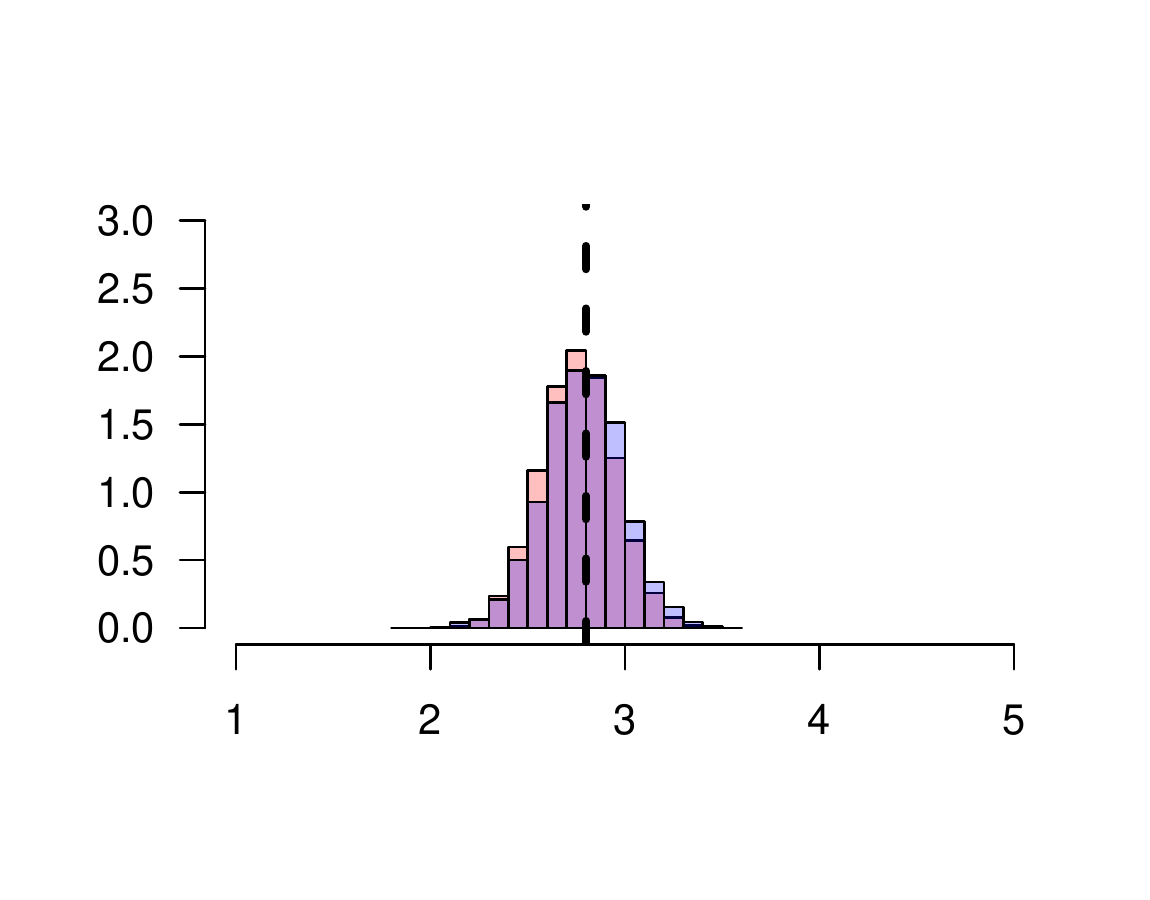}\\
		\hline \vspace*{-0.3in}\hspace*{-0.26in} \vspace*{-0.3in} \includegraphics[scale = 0.447]{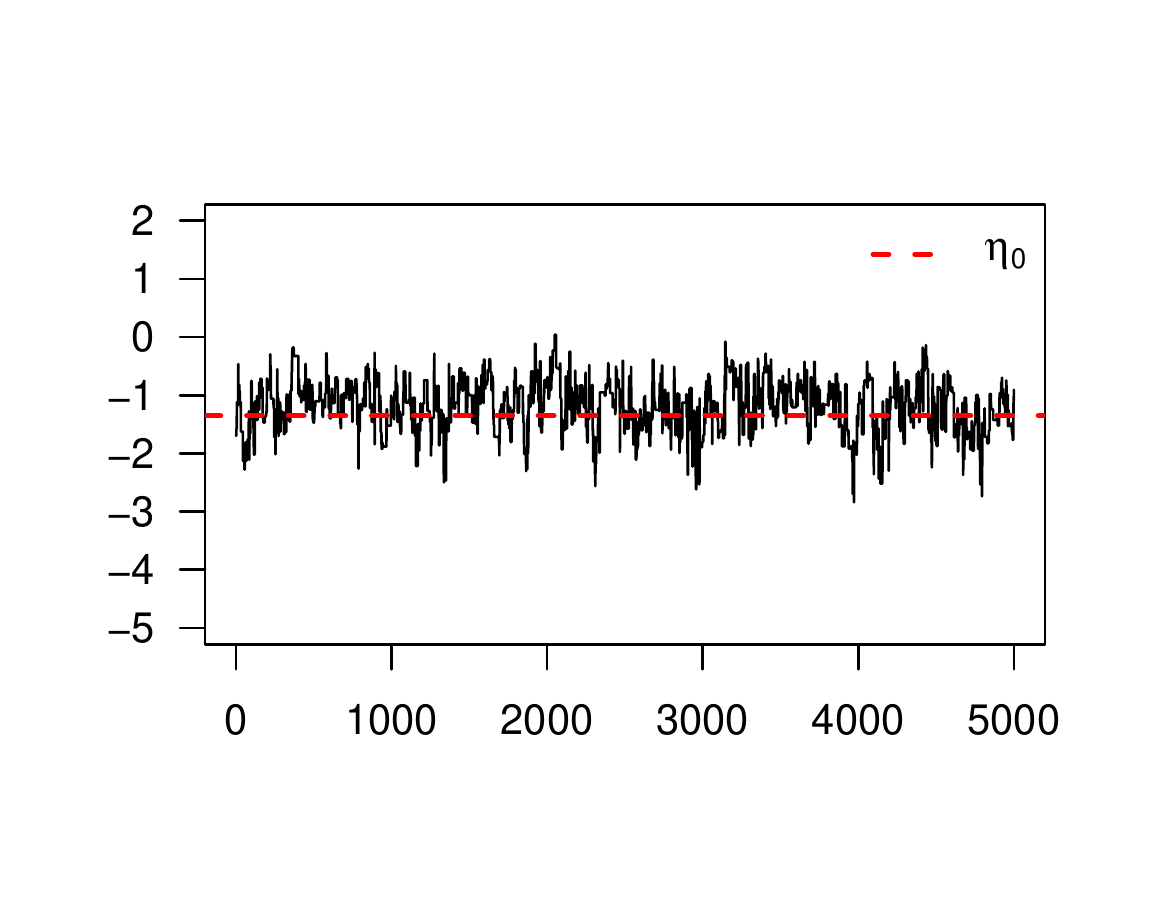}&\vspace*{-0.3in}\hspace*{-0.26in} \vspace*{-0.3in} \includegraphics[scale = 0.447]{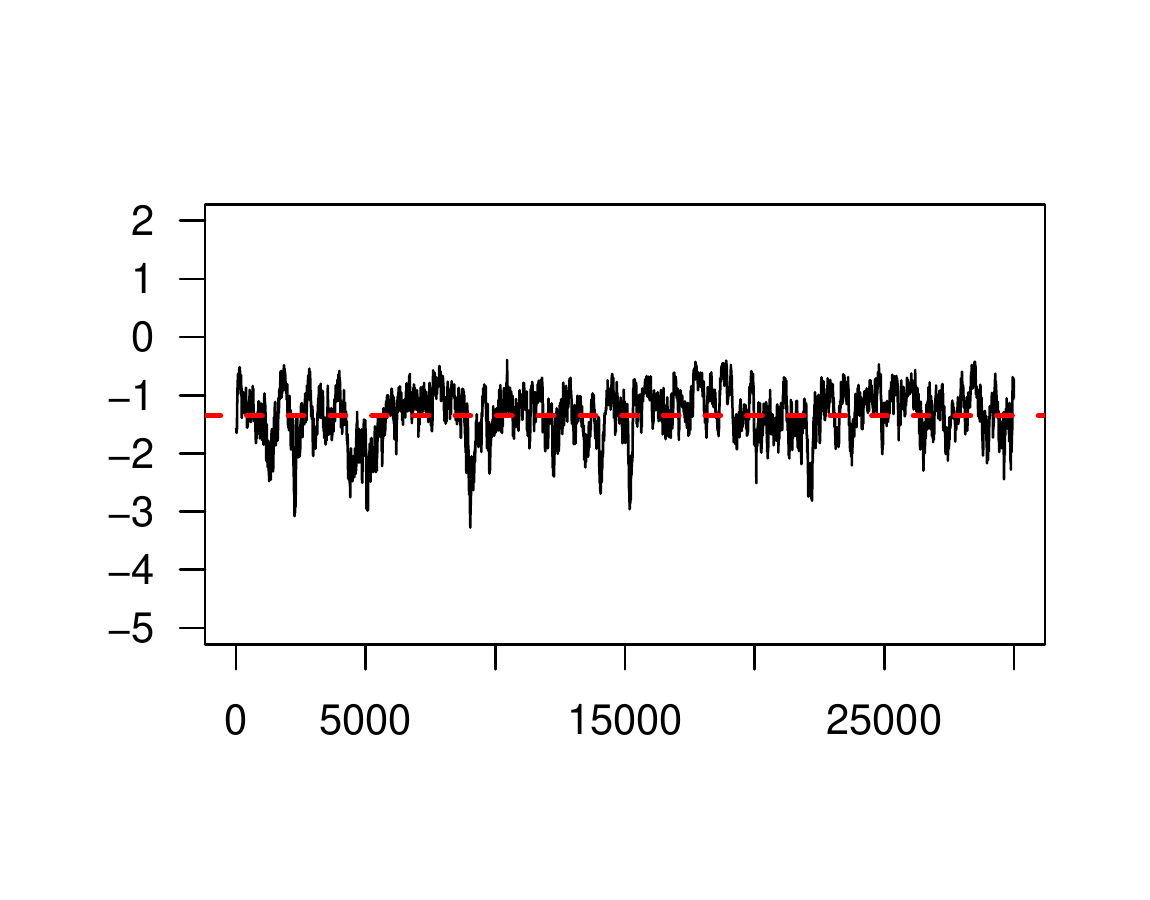}&\vspace*{-0.3in}\hspace*{-0.26in}\vspace*{-0.3in} \includegraphics[scale = 0.447]{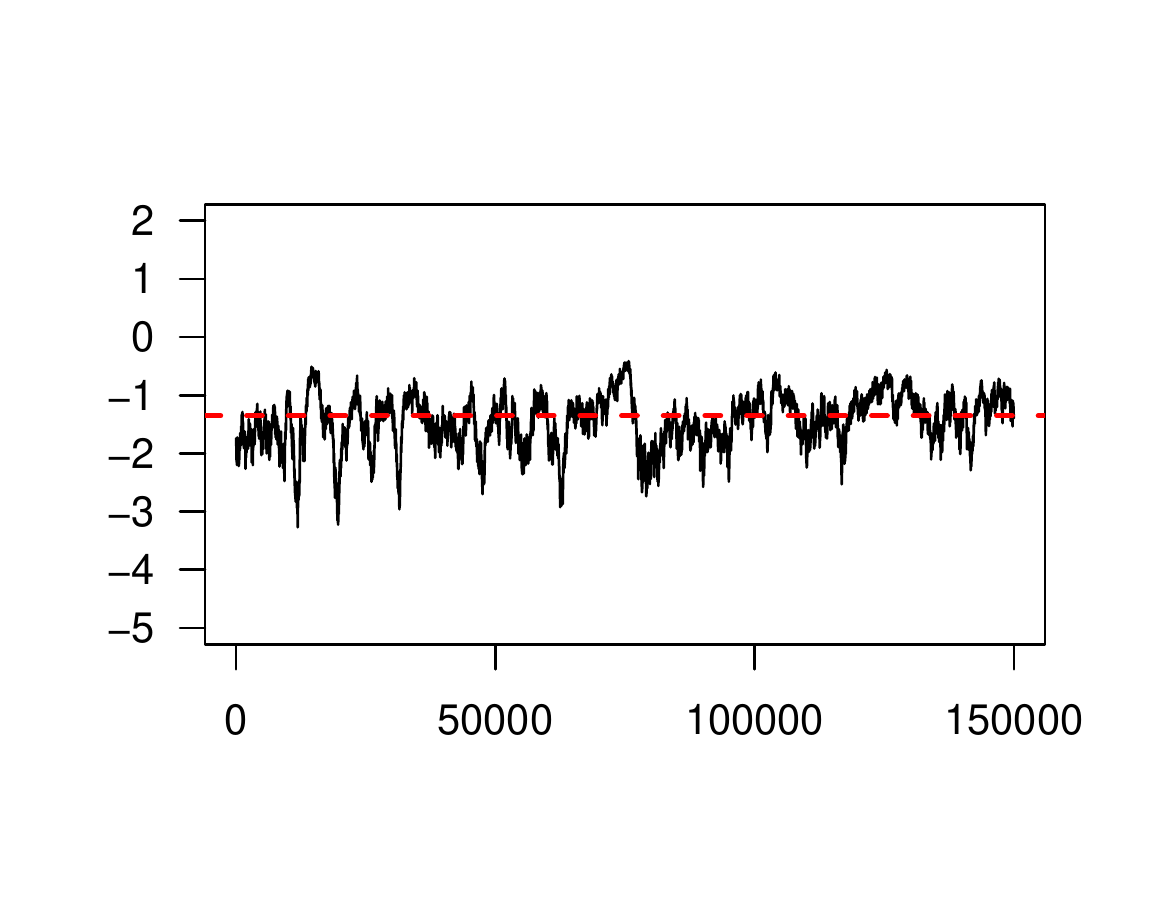}\\
		\hline \vspace*{-0.3in}\hspace*{-0.26in} \vspace*{-0.3in} \includegraphics[scale = 0.447]{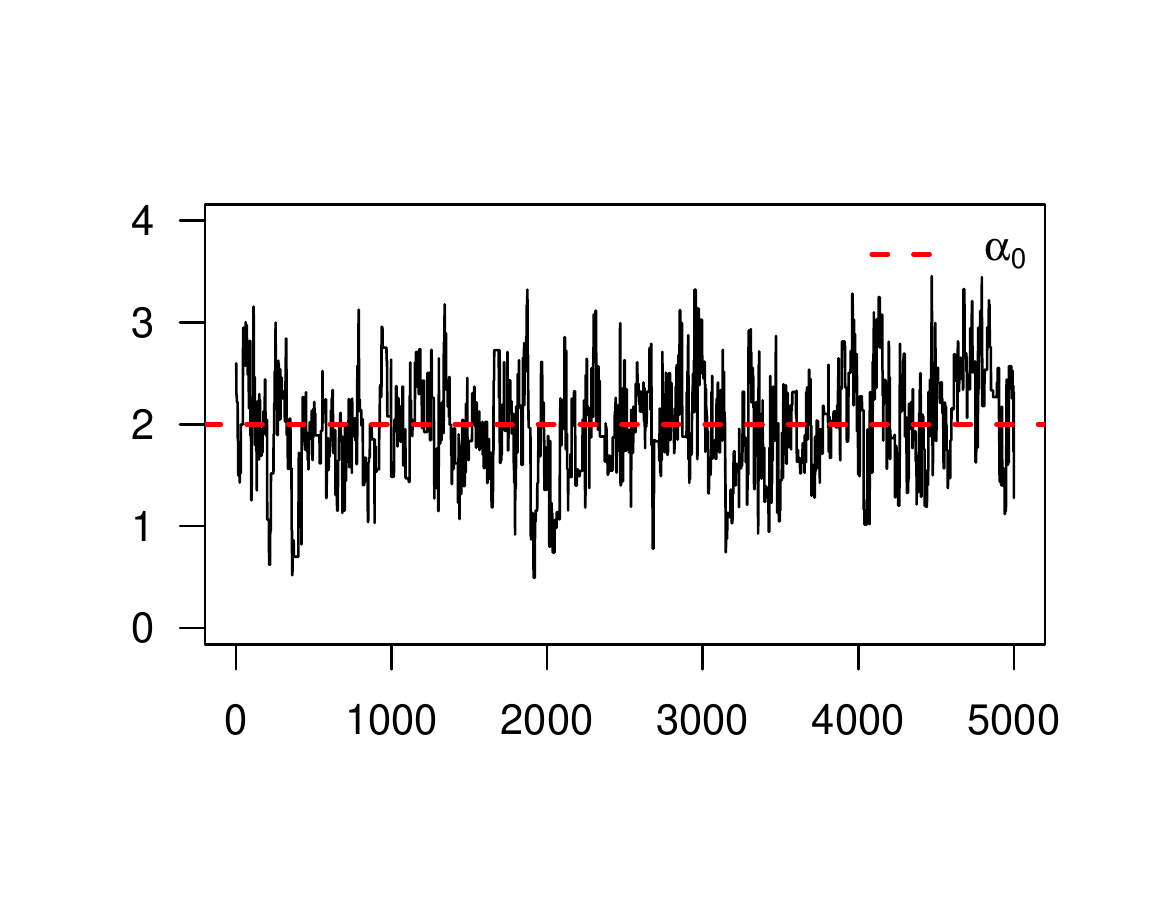}&\vspace*{-0.3in}\hspace*{-0.26in} \vspace*{-0.3in} \includegraphics[scale = 0.447]{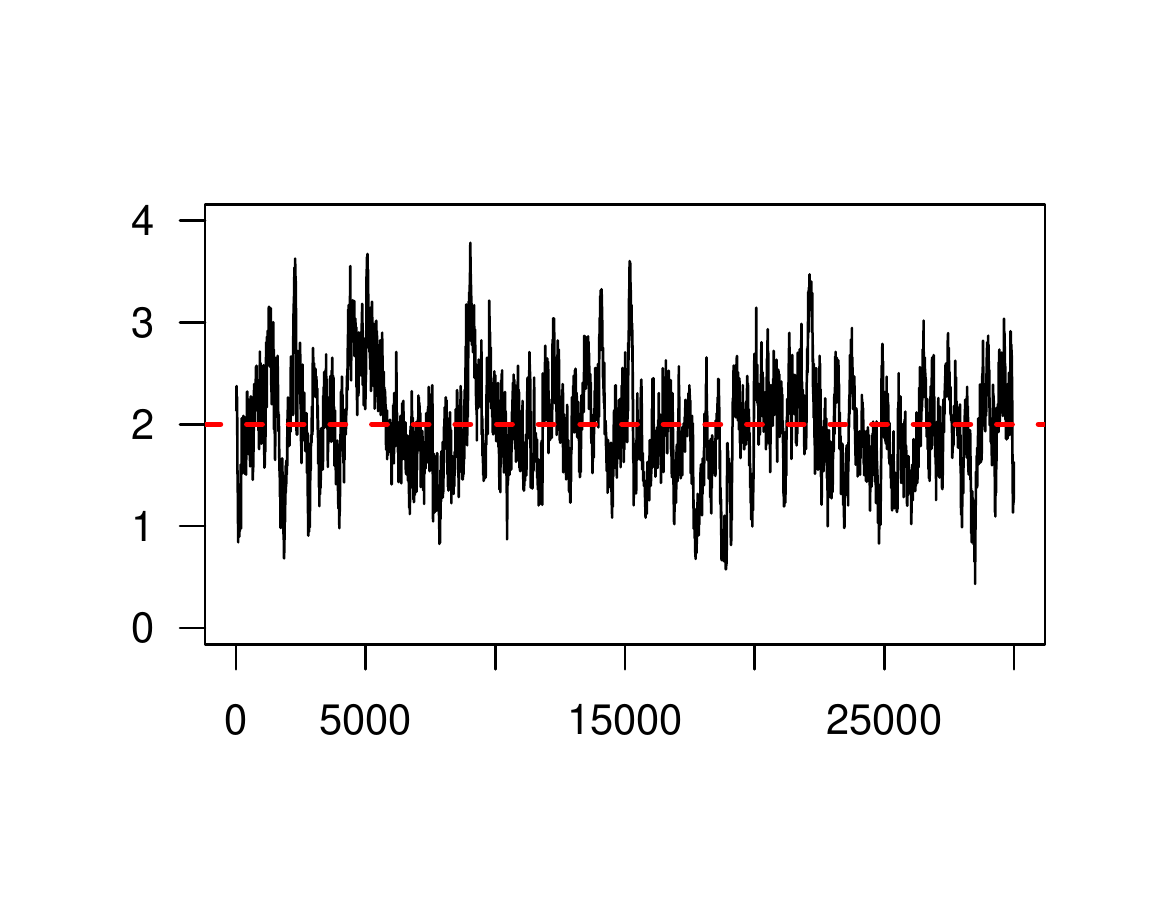}&\vspace*{-0.3in}\hspace*{-0.26in}\vspace*{-0.3in} \includegraphics[scale = 0.447]{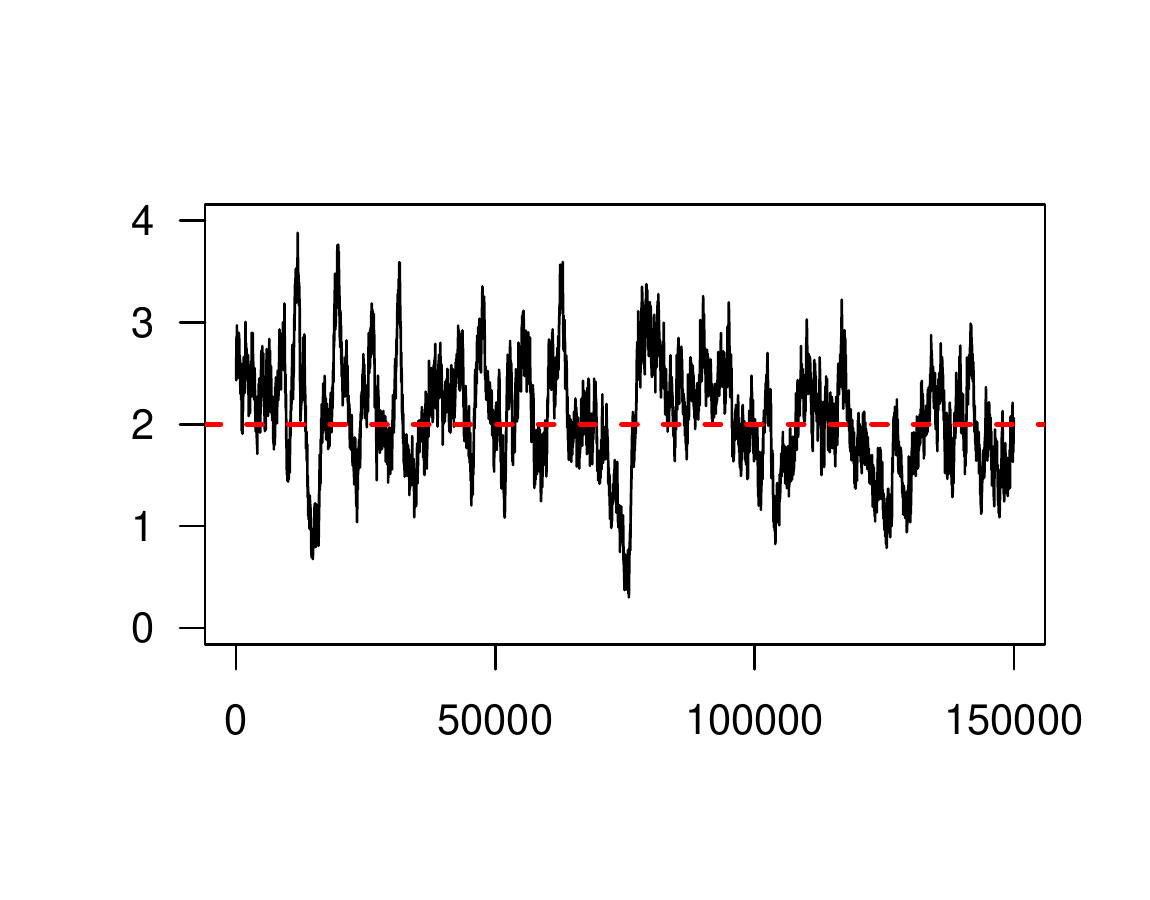}\\
		\hline \vspace*{-0.3in}\hspace*{-0.26in} \vspace*{-0.3in} \includegraphics[scale = 0.457]{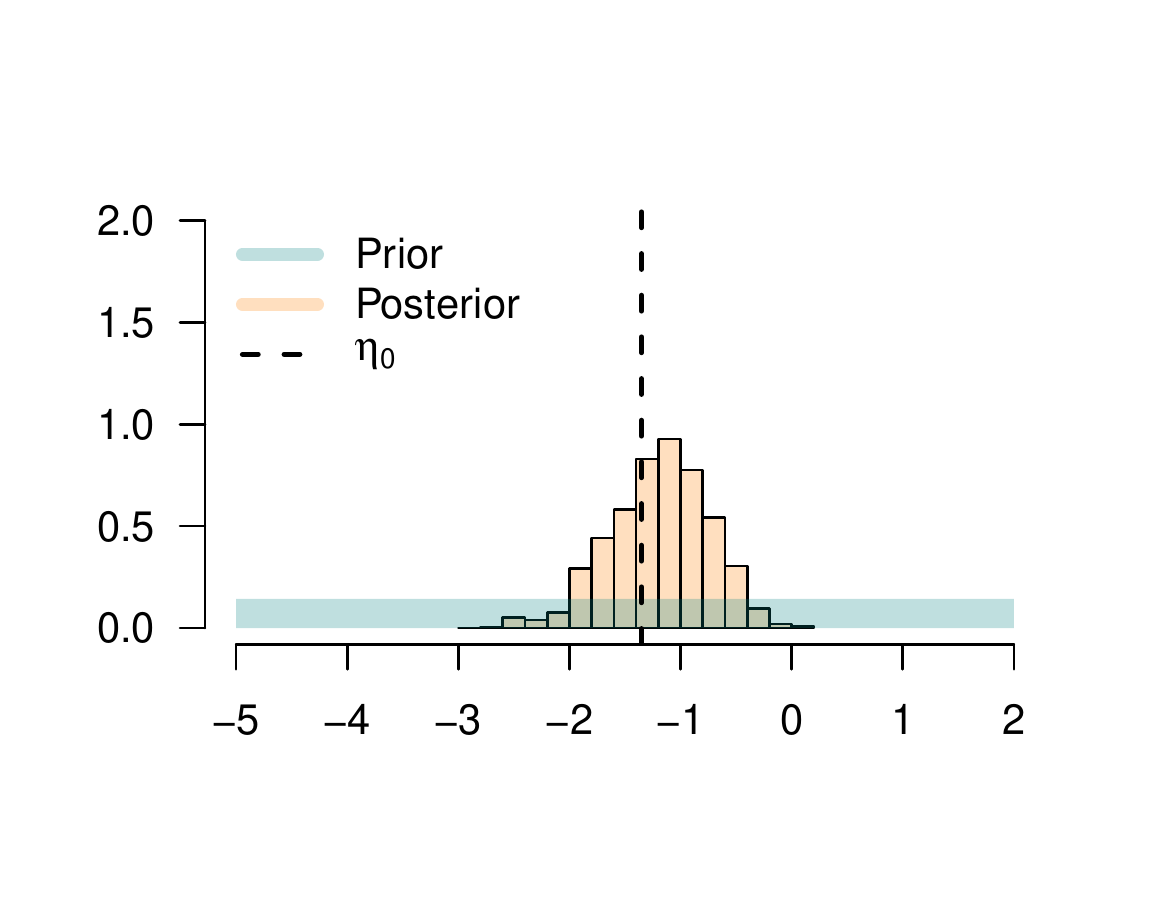}&\vspace*{-0.3in}\hspace*{-0.26in} \vspace*{-0.3in} \includegraphics[scale = 0.457]{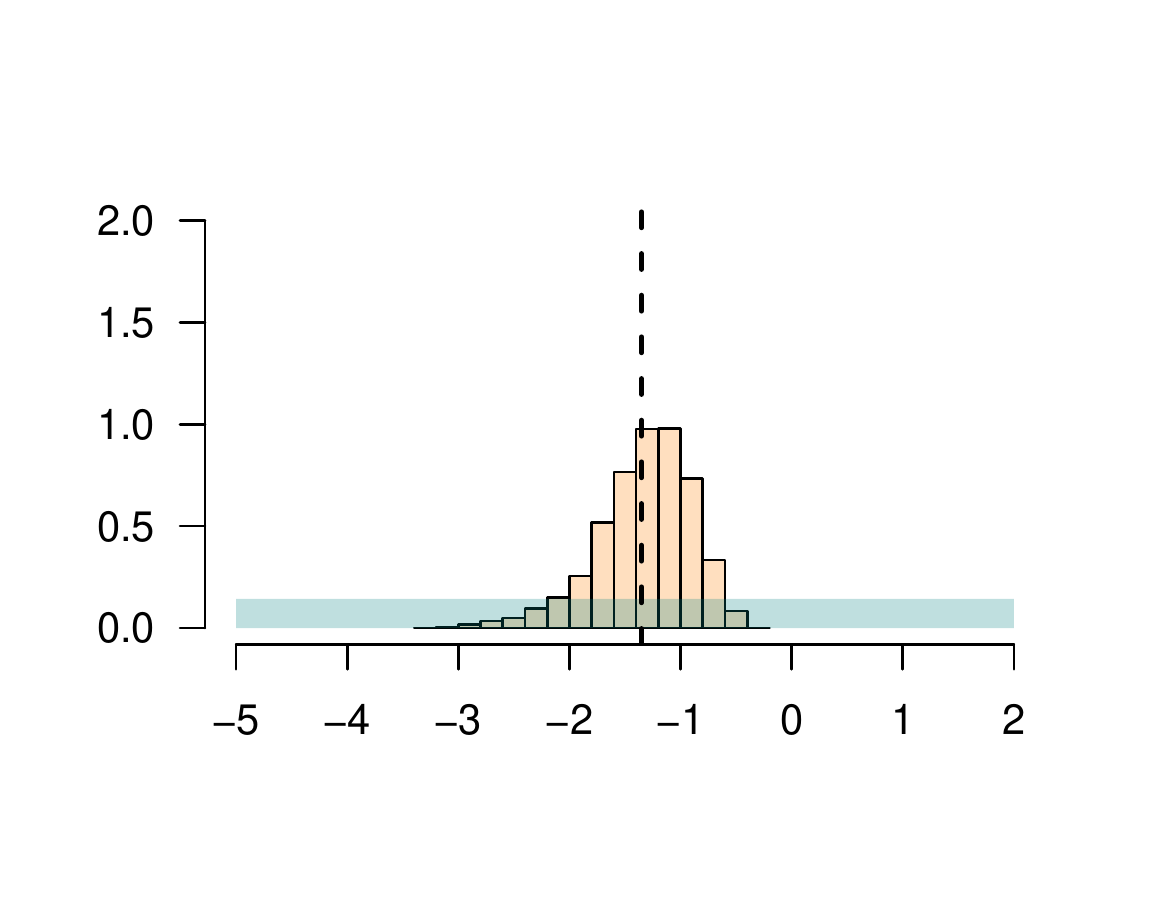}&\vspace*{-0.3in}\hspace*{-0.26in}\vspace*{-0.3in} \includegraphics[scale = 0.457]{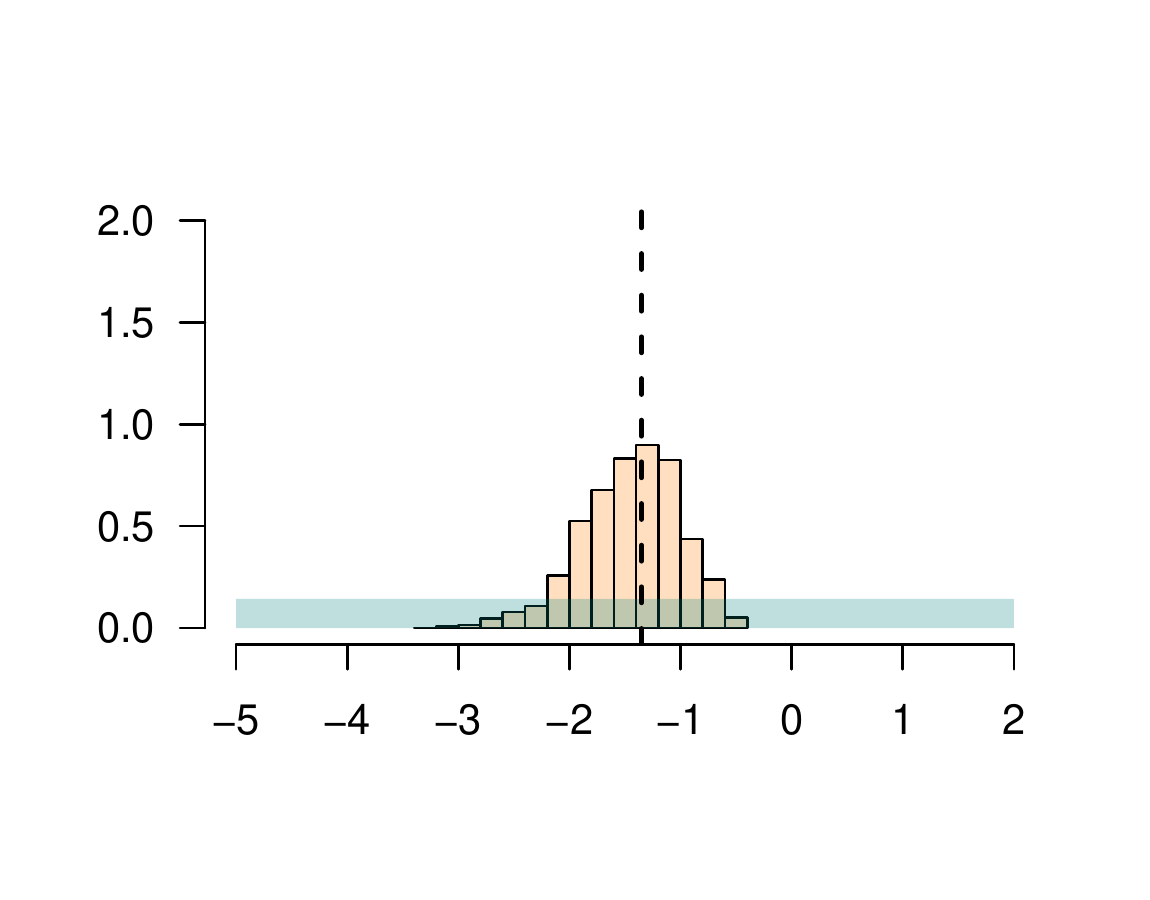}\\
		\hline \vspace*{-0.3in}\hspace*{-0.26in} \vspace*{-0.3in} \includegraphics[scale = 0.457]{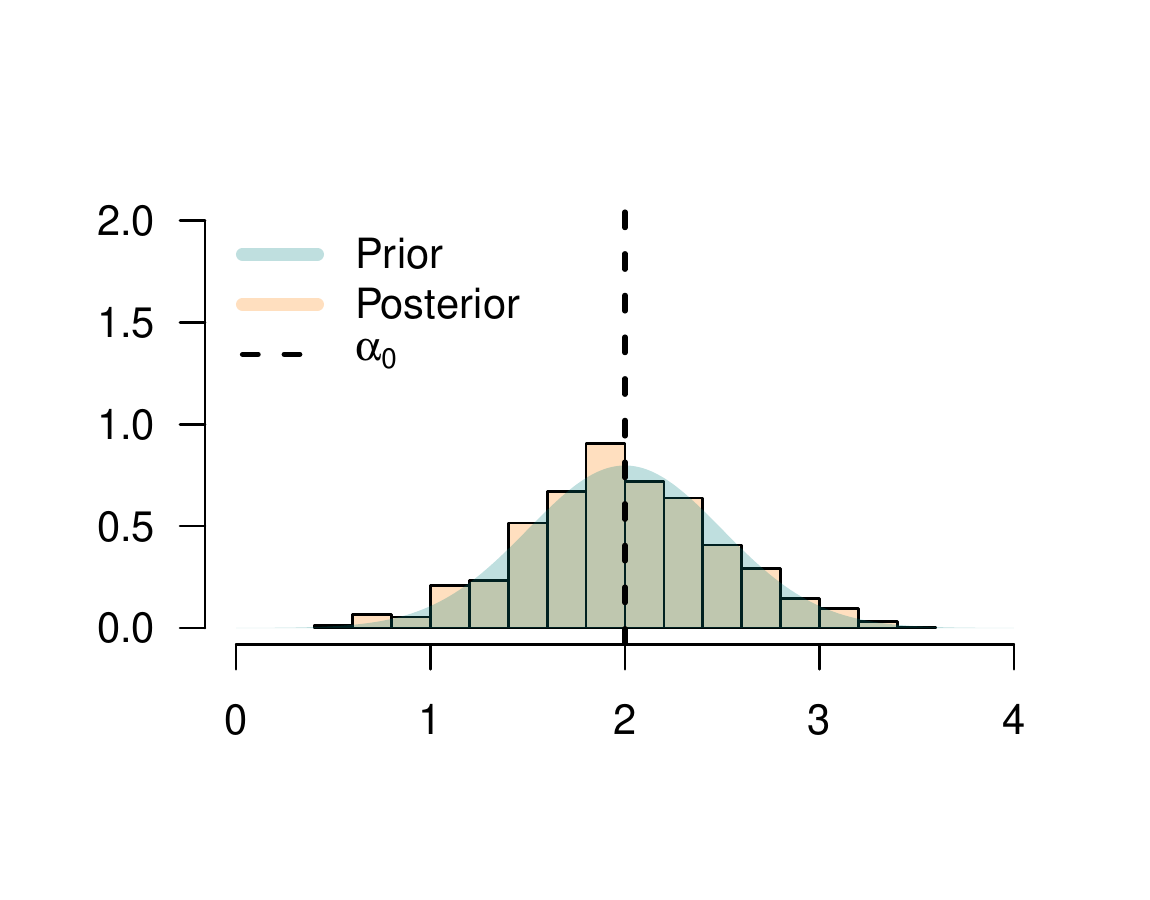}&\vspace*{-0.3in}\hspace*{-0.26in} \vspace*{-0.3in} \includegraphics[scale = 0.457]{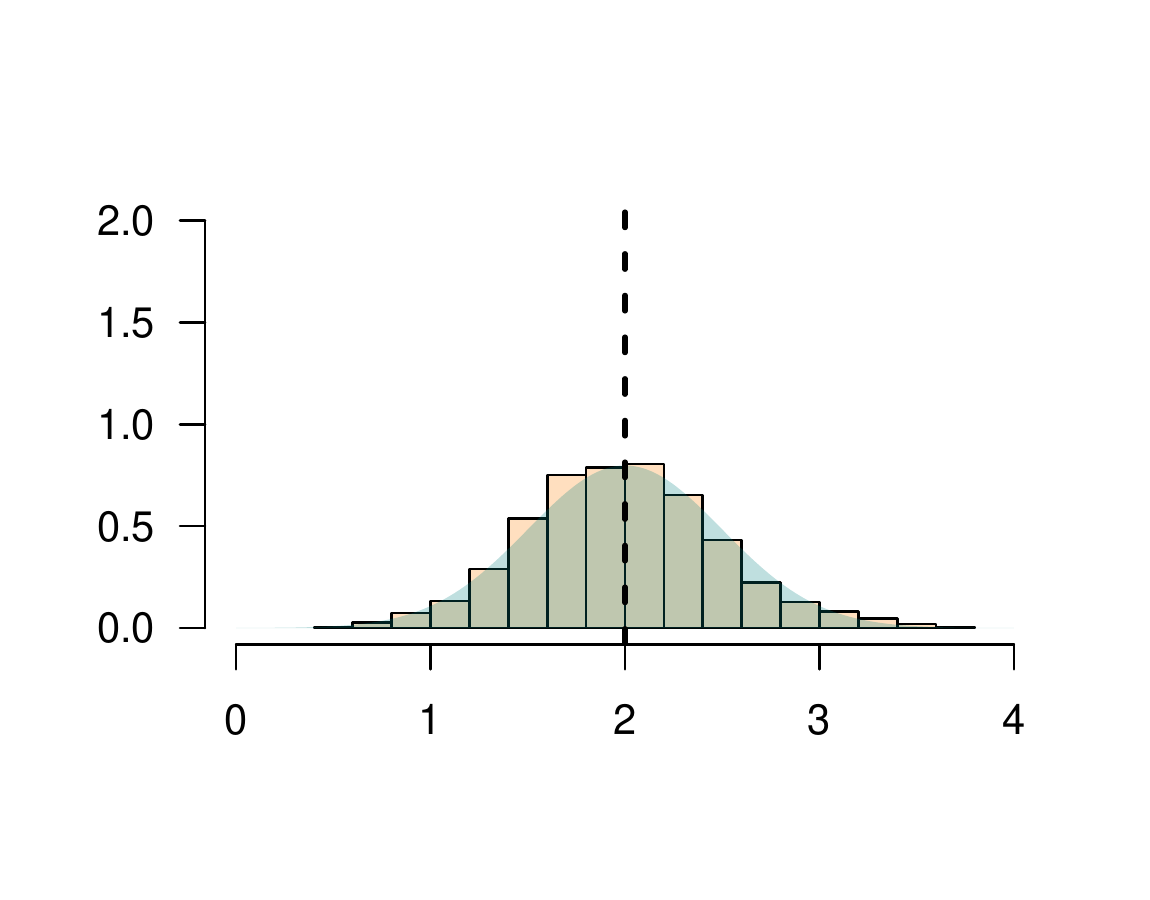}&\vspace*{-0.3in}\hspace*{-0.26in}\vspace*{-0.3in} \includegraphics[scale = 0.457]{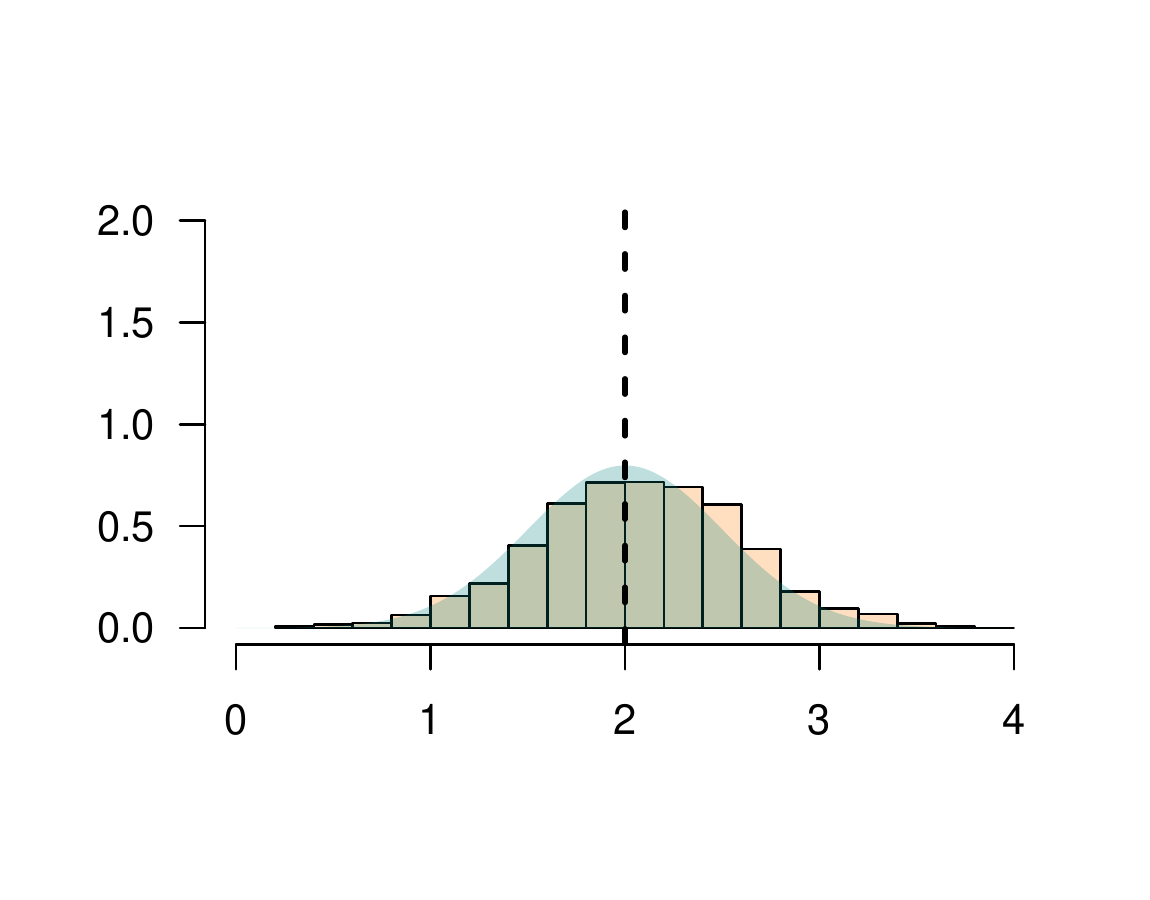}\\\hline
	\end{tabular}
	\vspace*{10pt}
	\captionof{figure}{\emph{Row 1}: Histogram of the posterior distribution of the functional of interest for the different parametrisations. \emph{Row 2}: The posterior path of $\eta$. \emph{Row 3}: The posterior path of $\a$. \emph{Row 4}: The prior and posterior of $\eta$. \emph{Row 5}: The prior and posterior of $\alpha$.}
	\label{fig:posteriors}
\end{table}

Both parametrisations seem to perform similarly, with only negligible differences in coverage and length. The posterior of $\a$ in the $P, \eta$ parametrisation seems to behave like the prior, suggesting the data has minimal influence on it. These results are important, as they confirm the practical usefulness of the model. When an expert has proper knowledge of the difference between the missing and observed population, it should not matter which parametrisation is being used. The choice should be based on the knowledge of the other parameters.

\section{Efficiency theory}
\label{SectionEfficiency}
In this section we derive the information for estimating the functional of interest when
the sensitivity function is known, 
in the nonparametric model in which the distribution of the observations is completely 
unknown.

We can parameterise the model by the pair $(p,P_1)$ and are then interested
in estimating the function $\chi(p,P_1)=(1-p)P_1(ge^q)/P_1e^q+pP_1g$, for a given
measurable function $g$. We suppose that $q$ is known, so that the functional
is identifiable, and are interested in obtaining the minimal attainable asymptotic variance
of a sequence of estimators (in the sense of the local minimax or convolution theorem,
as explained in e.g.\ \cite{vanderVaart(1991)}).

The likelihood for a single observation $(X,R)$ can be written as
$$(p,P_1)\mapsto (1-p)^{1-R} p^R p_1(X)^R.$$
A path $(p+th,P_{1,t})$ given by $dP_{1,t}\propto e^{t\phi}\,dP_1$,  gives a score function
at $t=0$ given by
$$h\frac{R-p}{p(1-p)}+R(\phi(X)-P_1\phi)=:(B_{p,P_1}^ph+B_{p,P_1}^{P_1}\phi)(R,X).$$
The tangent space of the model is by the definition the set of all scores, when $h$ varies
over $\RR$ and $\phi$ varies of $L_2(P_1)$.
Since $dP_{0,t}\propto e^q\,dP_{1,t}\propto e^qe^\phi\,dP_1\propto e^{t\phi}\,dP_0$, 
the path $P_{0,t}$ has score function $\phi-P_0\phi$. The derivative of the functional
of interest is 
\begin{align*}
	&\frac d{dt}_{|t=0}\int g\,dP_t=\frac d{dt}_{|t=0}\Bigl[(1-p_t)\int g\,dP_{0,t}+p_t\int g\,dP_{1,t}\Bigr]\\
	&\qquad=
	h\int g\,d(P_1-P_0)+(1-p)\int g(\phi-P_0\phi)\,dP_0+p\int g(\phi-P_1\phi)\,dP_1\\
	&\qquad=\E_{p,P_1}\biggl[\Bigl(\tilde h\frac{R-p}{p(1-p)}+R(\tilde\phi(X)-P_1\tilde\phi)\Bigr)
	\Bigl(B_{p,P_1}^ph+B_{p,P_1}^{P_1}\phi\Bigr)(R,X)\biggr],
\end{align*}
for the `least favourable directions' given by
\begin{align*}
	\tilde h&=p(1-p)\, (P_1-P_0)g,\\
	\tilde\phi&=\frac{1-p}p(g-P_0g)\frac {e^q}{P_1e^q}+g-P_1g=(g-P_0g)e^{\eta+q}+g-P_1g.
\end{align*}
Thus the efficient influence function is equal to 
$$B_{p,P_1}^p\tilde h+B_{p,P_1}^{P_1}\tilde \phi=(R-p)(P_0g-P_1g)+R\bigl((g-P_0g)e^{\eta+q}+(g-P_1g)\bigr).$$
The variance of this function is the least possible asymptotic variance, and is given by
\begin{align*}
	&p(1-p)(P_0g-P_1g)^2+p\,P_1\bigl((g-P_0g)e^{\eta+q}+(g-P_1g)\bigr)^2\\
	&\qquad\qquad=p(1-p)(P_0g-P_1g)^2+p\,P_1\bigl(g(1+e^{\eta+q})-P_1(g(1+e^{\eta+q}))\bigr)^2.
\end{align*}

\section{Proofs}

\begin{proof}[Proof of Theorem 8.2 (Posterior mean)]
	For $k\in\mathbb{N}$, define $\tau_k(\l \given x) = \int s^{k-1} e^{-s\left(\l + b(x)\right)} \,ds 
	= \Gamma(k) / \bigl(\l + b(x)\bigr)^k $. Then $\prod_{j=1}^{N_{j,n}}\tau_{N_{j,n}}(\l\given \tilde X_j)
	=\prod_{j=1}^{N_{j,n}}\Gamma(N_{j,n})/\prodin \bigl(\l+b(X_i)\bigr)$.
	Because the posterior mean is equal to the
	predictive distribution, Corollary~14.59 in \cite{Ghosal2017}, gives that 
	\begin{align*}
		\E\bigl(P_n(A) \given X^{(n)}\bigr) 
		= \sum_{j=1}^{K_n} p_j(X^{(n)}) \delta_{\tilde{X}_j}(A) + \int_A p_{K_n+1}(X^{(n)} \given x)\,d\overline{a}(x),
	\end{align*}
	where $\overline{a}$ is the normalisation of $a$ and the $p_j$ are the prediction probability function (PPF) of
	the exchangeable partitions generated by $P_n$, given by, for $j=1,\ldots, K_n$,
	\begin{align*}
		p_j(X^{(n)}) 
		&= \frac{N_{j,n}\int_0^\infty \l^{-1}e^{-\psi(\l)} \frac{\l }{\l + b(\tilde{X}_j)}\prod_{i=1}^n \frac{\l}{\l + b(X_i)}\,d\l}
		{n\int_0^\infty \l^{-1}e^{-\psi(\l)} \prod_{i=1}^n \frac{\l}{\l + b(X_i)}\,d\l}\\
		p_{K_n + 1}(X^{(n)} \given x) 
		&= \frac{\int_0^\infty\l^n e^{-\psi(\l)} \frac{1}{\l+b(x)} \prod_{i=1}^n \frac{\l}{\l + b(X_i)}\,d\l}
		{n\int_0^\infty \l^{-1}e^{-\psi(\l)} \prod_{i=1}^n \frac{\l}{\l + b(X_i)}\,d\l}.
	\end{align*}
	Define $p_1',\ldots, p_n'$ such that $N_{j,n}\,p_i'(X^{(n)})=p_j(X^{(n)})$ if $X_i=\tilde X_j$,
	and \\$p'_{n+1}(X^{(n)}\given x)=p_{K_n+1}(X^{(n)}\given x)$. (Hence 
	$p_i'(X^{(n)})$ is equal to the quotient in the preceding display that defines $p_j(X^{(n)})$ 
	without the multiplicity $N_{j,n}$, for $j$ the index of the partitioning set to 
	which  $X_i$ belongs.) Using this notation, we can simplify the formula for
	the posterior mean to 
	\begin{align*}
		\E\bigl(P_n(A) \given X^{(n)}\bigr) 
		= \sum_{i=1}^{n} p'_i(X^{(n)}) \delta_{X_i}(A) + \int_A p'_{n+1}(X^{(n)} \given x)\,d\overline{a}(x).
	\end{align*}
	As $\l / \bigl(\l + b(x)\bigr) \leq 1$, for all $x \in \mX$, it follows that $p_i'(X^{(n)}) \leq1/n$, for
	$i=1,2\ldots, n$, and $p'_{n+1}(X^{(n)} \given x ) \leq1/n$, for all $x \in \mX$. 
	Using this, we obtain
	\begin{align*}
		&\sup_{A \in \X}\left|\E\bigl(P_n(A) \given X^{(n)}\bigr) - \PP_n(A) \right| \\
		&\quad\leq \sup_{A \in \X} \sum_{i=1}^n \left| p'_i(X^{(n)}) - \frac{1}{n} \right| \delta_{X_i}(A) 
		+ \sup_{A \in \X}\left|\int_A p'_{n+1}(X^{(n)} \given x)\,d\overline{a}(x) \right| \\
		&\quad= 1 - \sum_{i=1}^n p'_i(X^{(n)}) + \int p'_{n+1}(X^{(n)} \given x)\,d\overline{a}(x) 
		= 2\int p'_{n+1}(X^{(n)} \given x)\,d\overline{a}(x) 
		\le \frac{2}{n},
	\end{align*}
	where we used that $\overline{a}$ is a probability measure.
\end{proof}	

\begin{proof}[Proof of Lemma 8.1]
	We first show the equality for $f = \mathbbm{1}_A$ with $A \in \X$. 
	Note that $1/y = \int_0^\infty e^{-t y}\, dt$ for any $y \in \RR^+$. This can be used to write
	\begin{align*}
		&\E_W\E_\Psi \left[\frac{\Psi(A)}{\Psi(\mX) + \sumin W_i} \right] 
		= \E_W\E_\Psi \left[\Psi(A)\int_0^\infty e^{-t\left(\Psi(\mX) + \sumin W_i\right)}\, dt \right] \\
		&\qquad= \int_0^\infty \E_W\E_\Psi \left[\Psi(A)e^{-t\Psi(A)} \cdot e^{-t\Psi(A^c)} \cdot e^{-t\sumin W_i}\right] \, dt,
	\end{align*}
	where we used Fubini's theorem in the last equality. Since 
	$\Psi$ is a Poisson process and the sets $A$ and $A^c$ are disjoint, the variables
	$\Psi(A)$ and $\Psi(A^c)$ are independent. They are also independent of the $W_i$ 
	and these weights are also independent of each other. Therefore the preceding display can be written as
	\begin{align}
		\label{eq:3parts}
		\int_0^\infty \E_\Psi \left[\Psi(A)e^{-t\Psi(A)}\right] 
		\cdot \E_\Psi \left[e^{-t\Psi(A^c)} \right] 
		\cdot \prodin \E_W\left[e^{-tW_i}\right]\,  dt
	\end{align}
	Each term can be calculated explicitly. We start with the second term in the above display. By Proposition J.6 in \cite[p.~598]{Ghosal2017}, 
	we have
	\begin{align*}
		\E_\Psi \left[e^{-t\Psi(A^c)} \right] 
		&= e^{-\int_{A^c}\int_0^\infty \left(1-e^{-ts}\right) s^{-1} e^{-s b}\, ds\, da} 
		= e^{-\int_{A^c} \log\left(\frac{t + b}{b}\right)\, da},
	\end{align*}
	where the last equality follows from Lemma~8.3 in the main paper, for any positive 
	constant $t$. In view of this, the first term in equation \eqref{eq:3parts} can be written
	\begin{align*}
		\E_\Psi \left[\Psi(A)e^{-t\Psi(A)}\right] &= -\frac{d}{dt} \E_\Psi \left[e^{-t\Psi(A)}\right] 
		= e^{-\int_{A} \log\left(\frac{t + b}{b}\right)\, da} \cdot \int_A \frac{1}{t + b}\, da.
	\end{align*}
	The third term in equation \eqref{eq:3parts} is
	\begin{align*}
		\prodin \E_W\left[e^{-tW_i}\right] 
		&= \prodin \int_0^\infty e^{-tw}\,b(X_i) e^{-w b(X_i)}\,dw= \prodin  \frac{ b(X_i)}{t + b(X_i)}.
	\end{align*}
	Combining these identities finishes the proof of the lemma for indicator functions. 
	
	When $f = \sumin a_i \mathbbm{1}_{A_i}$ for disjoint sets $A_1,\ldots,A_k$, the lemma is again true, 
	by the independence of the random variables $\Psi(A_i)$. 
	Any nonnegative measurable function $f$ can be approximated by such linear combinations $f_k$ as above with $f_k \leq f_{k+1}$ almost everywhere for each $k$ and $\limsup_{k \to \infty} f_k = f$. Then by the monotone convergence theorem, our claim is also true for any nonnegative measurable function $f$.
\end{proof}

\begin{proof}[Proof of Lemma 8.2]
	For any $x > 0$ we have
	\begin{align*}
		\max_{1 \leq i \leq n} \frac{\left|X_i \right|^r}{n} 
		\leq \frac{y^r}{n} + \frac{1}{n} \sumin \left| X_i \right|^r \mathbbm{1}_{\{|X_i| > y\}}.
	\end{align*}
	As $n \ra \infty$ the first term tends to zero, for fixed $y$, while the second term tends to 
	$\E\left|X_i \right|^r  \mathbbm{1}_{\{|X_i| > y\}}$,
	by the law of large numbers, and can be made arbitrarily small by choice of $y$.
\end{proof}

\begin{proof}[Proof of Lemma 8.3]
	Write $\psi(\l)$ for the left side of the equation. The derivative of the function $\l\mapsto \psi(\l)$
	is equal to
	\begin{align*}
		\psi'(\l) = \int\!\!\int_0^\infty s   e^{-s\l}s^{-1} e^{-sb}\,ds\,da = \int \frac{1}{\l + b}\,da.
	\end{align*}
	Since $\psi(0)=0$, the fundamental theorem of calculus gives
	$\psi(\l) = \int_0^\l \psi'(t)\,dt 
	= \int \log(1+\l/b\bigr)\, da$, by Fubini's theorem.
\end{proof}

\begin{proof}[Proof of Lemma 8.4 (Concentration of the mixing distribution)]
	We start by deriving upper and lower bounds on the posterior density given in equation~8.2 in the main paper.
	Denote the norming constant of this density by  $C_n$. 
	
	For the upper bound, we use that $\psi(\l)\ge 0$, $b(\tilde X_j)\ge \ub$,
	and $\l+\ub\ge \ub$ to find that 	
	\begin{align*}
		\pi(\l \given X^{(n)})
		&\leq \frac{1}{C_n\ub} \Bigl(\frac{\l}{\l+\ub}\Bigr)^{n-1} 
		\leq \frac{1}{C_n\ub} e^{- \ub /\delta},
	\end{align*}
	for $\l+\ub<\delta(n-1) $, where we used that $1-x\le e^{-x}$, for every $x$.
	
	For the lower bound we use that $\psi(\l) \leq a(\mX)\, \log(1+\l/\ub)$, in view of
	Lemma~8.3 in the main paper, and that
	$\l/(\l+b(\tilde X_j))\ge e^{-b(\tilde X_j)/\l}$ to find that
	\begin{align*}
		\pi(\l \given X^{(n)})
		&\geq \frac{1}{C_n\l} e^{-a(\mX)\log(1+\l/\ub)} e^{-\sum_{j=1}^{K_n} N_{j,K_n}b(\tilde X_j)/\l} \\
		&= \frac{1}{C_n\l}\Bigl(1+\frac\l\ub\Bigr)^{-a(\mX)}  e^{-n\PP_nb/\l} \\
		&\geq \frac{(\ub/2)^{a(\mX)}}{C_n\l^{1+a(\mX)}}e^{-\PP_nb/\epsilon} ,
	\end{align*}
	for $\l>n\epsilon\vee \ub$ and any $\epsilon>0$, so that $1+\l/\ub\le 2\l/\ub$.
	
	By combining the upper and lower bounds, we obtain, for $n\epsilon>\ub$,
	\begin{align*}
		\frac{\Pi(\Lambda < \delta n-\d-\ub \given X^{(n)}) }
		{\Pi(\Lambda >\epsilon n\given X^{(n)}) }
		&\le \frac{\int_0^{\delta n}\ub^{-1} e^{-\ub/\delta}\,d\l}
		{\int_{\epsilon n}^\infty \l^{-1-a(\mX)}(\ub/2)^{a(\mX)}e^{-\PP_nb/\epsilon}\,d\l} \\
		&= \delta n^{1+a(\mX)} e^{-\ub/\delta}\,e^{\PP_nb/\epsilon}a(\mX)\epsilon^{a(\mX)}\ub^{-a(\mX)-1}2^{a(\mX)}.
	\end{align*}
	Since the denominator on the left side is smaller than 1, the probability 
	$\Pi(\Lambda < \delta n-\d-\ub \given X^{(n)}) $ is also bounded by the right side. 
	Here $\PP_n b\ra P_0b$, by the strong law of large numbers, and
	$e^{-\ub /\delta}=n^{-\ub/c}$, for $\delta=c/\log n$.
	Then $n^d$ times the right side tends to zero if $1+a(\mX)+d\le \ub/c$.
	
	This is true uniformly in $b$ provided $\PP_n b$ remains bounded.
\end{proof}

\bibliographystyle{plainnat} 
\bibliography{Mybib}